\begin{document}

\newtheorem{theorem}{Theorem}[section]
\newtheorem{corollary}[theorem]{Corollary}
\newtheorem{lemma}[theorem]{Lemma}
\newtheorem{proposition}[theorem]{Proposition}
\newtheorem{conjecture}[theorem]{Conjecture}
\newtheorem{definition}[theorem]{Definition}
\newtheorem{problem}[theorem]{Problem}
\newtheorem{remark}[theorem]{Remark}
\newtheorem{example}[theorem]{Example}

\def\mA{{\mathfrak A}}
\def\mB{{\mathfrak B}}

\def\A{{\mathcal A}}
\def\B{{\mathcal B}}
\def\C{{\mathcal C}}
\def\D{{\mathcal D}}
\def\E{{\mathcal E}}
\def\F{{\mathcal F}}
\def\H{{\mathcal H}}
\def\J{{\mathcal J}}
\def\K{{\mathcal K}}
\def\LL{{\mathcal L}}
\def\N{{\mathcal N}}
\def\O{{\mathcal O}}
\def\P{{\mathcal P}}
\def\R{{\mathcal R}}
\def\SS{{\mathcal S}}
\def\U{{\mathcal U}}
\def\W{{\mathcal W}}
\def\Z{{\mathcal Z}}

\def\bC{{\mathbb C}}
\def\bN{{\mathbb N}}
\def\bT{{\mathbb T}}
\def\bZ{{\mathbb Z}}

\def\auto{{\operatorname{Aut}}}
\def\endo{{\operatorname{End}}}
\def\outo{{\operatorname{Out}}}
\def\inn{{\operatorname{Inn}}}
\def\sp{{\operatorname{span}}}
\def\clsp{{\overline{\operatorname{span}}}}
\def\di{{\operatorname{diag}}}
\def\Ad{\operatorname{Ad}}
\def\id{\operatorname{id}}
\def\perm{\operatorname{Perm}}
\def\res{\operatorname{Res}}

\def\g{{\mathfrak G}{\mathfrak R}_E}
\def\gg{{\mathfrak G}_E}
\def\ae{\widetilde{{\mathfrak A}}_E}
\def\aee{{\mathfrak A}_E}

\title{Endomorphisms of Graph Algebras}

\author{Roberto Conti\footnote{This research was supported through the 
programme "Research in Pairs" by the Mathematisches Forschungsinstitut Oberwolfach in 2011.}, 
Jeong Hee Hong\footnote{This work was supported by National Research Foundation of Korea
Grant funded by the Korean Government (KRF--2008--313-C00039).}\hspace{1mm} and 
Wojciech Szyma{\'n}ski*\footnote{This work was supported by: the FNU Rammebevilling `Operator
algebras and applications' (2009--2011), the Marie Curie Research Training Network
MRTN-CT-2006-031962 EU-NCG, the NordForsk Research Network `Operator algebra and dynamics' 
(grant \#11580), the FNU Forskningsprojekt `Structure and Symmetry' (2010--2012), 
and the EPSRC Grant EP/I002316/1.}}

\date{19 May, 2012 (revised 18 August, 2012)}
\maketitle

\renewcommand{\sectionmark}[1]{}

\vspace{7mm}
\begin{abstract}
We initiate a systematic investigation of endomorphisms of graph $C^*$-algebras $C^*(E)$, extending 
several known results on endomorphisms of the Cuntz algebras $\O_n$. 
Most but not all of this study is focused on endomorphisms which permute the vertex projections and globally preserve the diagonal MASA $\D_E$ of $C^*(E)$. Our results pertain both automorphisms and 
proper endomorphisms. Firstly, the Weyl group and the restricted Weyl group of a graph 
$C^*$-algebra are introduced and investigated. In particular, criteria of outerness for automorphisms 
in the restricted Weyl group are found. We also show that the restriction to the diagonal  MASA of an 
automorphism which globally preserves both $\D_E$ and the core AF-subalgebra eventually 
commutes with the corresponding one-sided shift. Secondly, we exhibit several properties of proper 
endomorphisms, investigate invertibility of localized endomorphisms both on $C^*(E)$  
and in restriction to $\D_E$, and develop a combinatorial approach to analysis of permutative endomorphisms. 
\end{abstract}

\vfill
\noindent {\bf MSC 2010}: 46L40, 46L55, 46L05

\vspace{3mm}
\noindent {\bf Keywords}: Cuntz-Krieger algebra, graph algebra, endomorphism, automorphism, 
permutative endomorphism, AF-subalgebra, MASA, subshift

\newpage


\section{Introduction}\label{sectionI}

The main aim of this article is to carry out systematic investigations of a certain natural class of 
endomorphisms and, in particular, automorphisms of graph $C^*$-algebras $C^*(E)$. Namely, we focus on 
those endomorphisms which globally preserve the diagonal MASA $\D_E$. 
This leads to the concept of the {\em Weyl group} of a graph algebra, arising from the normalizer 
of a maximal abelian subgroup of the automorphism group of $C^*(E)$, consisting of those 
automorphisms which fix the diagonal $\D_E$ point-wise. We investigate in depth 
an important subgroup of the Weyl group corresponding to those automorphisms which 
globally preserve the core AF-subalgebra $\F_E$ as well---the {\em restricted Weyl group} 
of $C^*(E)$. We develop powerful novel techniques of both analytic and combinatorial nature for the study 
of automorphisms these groups comprise. 

\vspace{2mm}
By analogy with the theory of semi-simple Lie groups, 
Cuntz introduced in \cite{Cun2} the Weyl group of the simple, purely infinite $C^*$-algebras $\O_n$. 
Quite similarly with the classical theory, it arises as the quotient of the normalizer of a maximal abelian 
subgroup of the automorphism group of the algebra. So defined Weyl group is discrete albeit 
infinite, and the abelian subgroup in question is an inductive limit of higher dimensional tori. 
Cuntz posed a problem of determining the structure of the important subgroup of the Weyl group, 
corresponding to those automorphisms in the Weyl group which globally preserve the 
canonical UHF-subalgebra of $\O_n$, \cite{Cun2}. After 30 years, this question has been 
finally answered in \cite{CHS1}. In the present paper, we take this programme one step further, 
expanding it from Cuntz algebras $\O_n$ to a much wider class of graph $C^*$-algebras. 

The theory of graph $C^*$-algebras began in earnest in the late nineties, \cite{KPRR,KPR}, and since then 
it has developed into a fully fledged and very active area of research within operator algebras. 
For a good general introduction to graph $C^*$-algebras we refer the reader to \cite{Rae}. 
In the case of finite graphs, the corresponding $C^*$-algebras essentially coincide with 
Cuntz-Krieger algebras, that were introduced much earlier, \cite{CK}, in connection with 
topological Markov chains.
The importance of graph algebras (or Cuntz-Krieger algebras) stems to a large extent from numerous 
applications they have found. Not trying to be exhaustive in any way, we only mention: their role in
classification of purely infinite, simple $C^*$-algebras, \cite{Ror1,Szy1}, and related to that applications 
to the problem of semiprojectivity of Kirchberg algebras, \cite{Bla,Szy2,Sp}; their connection with objects of 
interest in noncommutative geometry and quantum group theory, \cite{HSz,CM,PR}; their  strong interplay 
with theory of symbolic dynamical systems, going back to the original paper of Cuntz and Krieger, \cite{CK,Boy}. 
It is also worth mentioning that graph $C^*$-algebras (similarly to Cuntz algebras) in many ways behave  
as purely combinatorial objects, and are thus strongly connected to their purely algebraic counterparts, the 
Leavitt path algebras, \cite{AAP}. Therefore we believe that many of the results of the present 
paper are applicable to the latter algebras as well. 

The analysis of endomorphisms of graph algebras, developed in the present article, owes a great 
deal to the close relationship between such algebras and the Cuntz algebras $\O_n$. The 
$C^*$-algebras $\O_n$ were first defined and investigated by Cuntz in his seminal 
paper \cite{Cun1}, and they bear his name ever since. The Cuntz algebras have been extensively 
used in many a diverse contexts, including classification of $C^*$-algebras, quantum field theory, 
self-similar sets, wavelet theory, coding theory, spectral flow, subfactors 
and index theory, among others.  

Systematic investigations of endomorphisms of $\O_n$, $2\leq n<\infty$, were initiated by Cuntz in 
\cite{Cun2}.  A fundamental bijective correspondence $\lambda_u\leftrightarrow u$ 
between unital $*$-endomorphisms and 
unitaries in $\O_n$ was established therein. Using this correspondence, Cuntz 
proved a number of interesting results, in particular with regard to those endomorphisms which 
globally preserve either the core UHF-subalgebra $\F_n$ or the diagonal MASA $\D_n$. 
Likewise, investigations of automorphisms of $\O_n$ began almost immediately after the birth 
of the algebras in question, \cite{Cun2}. 
Endomorphisms of the Cuntz algebras have played a role in certain aspects of  index  
theory, both from the $C^*$-algebraic and von Neumann algebraic point of view, e.g. see 
\cite{I1}, \cite{J} and \cite{CP}. One of the most interesting applications of 
endomorphisms of $\O_n$, found by Bratteli and J{\o}rgensen in \cite{BJ1,BJ2}, is in the 
area of wavelets. In particular, permutative endomorphisms have been used in this context. 
These were further investigated by Kawamura, \cite{Kaw}. 

The present article builds directly on the progress made recently in the study of localized endomorphisms 
of $\O_n$ by Conti, Szyma\'{n}ski and their collaborators. In particular, much better understanding 
of those endomorphisms of $\O_n$ which globally preserve the core UHF-subalgebra or the canonical 
diagonal MASA has been obtained in \cite{CRS} and \cite{HSS}, respectively. 
In \cite{Szy,CS,CKS,C,CHS2}, a novel combinatorial 
approach to the study of permutative endomorphisms of $\O_n$ has been introduced, 
and subsequently significant progress in the investigations of such endomorphisms and  
automorphisms has been obtained. In particular, a striking relationship between 
permutative automorphisms of $\O_n$ and automorphisms of the full two-sided $n$-shift has been 
found in \cite{CHS1}. 

In the present article, we extend this analysis of endomorphisms of the Cuntz 
algebras to the much larger class of graph $C^*$-algebras. Most of our results (but not all) 
are concerned with algebras corresponding to finite graphs without sinks, which may be identified 
with Cuntz-Krieger algebras of finite $0$--$1$ matrices, \cite{CK}. From the very beginning, 
the theory of such algebras has been closely related to dynamical systems. In particular, endomorphisms of 
Cuntz-Krieger algebras have been studied in the context of index theory, \cite{I2}. Quasi-free 
automorphisms of Cuntz-Krieger algebras (and even more generally, Cuntz-Pimsner algebras) have been 
studied in \cite{KT}, \cite{Z} and \cite{DS}. An interesting connection between automorphisms 
of Cuntz-Krieger algebras and Markov shifts has been investigated by Matsumoto, \cite{Mat1,Mat}. 

\medskip
The present paper is organized as follows. In Section \ref{sectionNP}, we set up notation and present some 
preliminaries. In particular, we introduce a class of endomorphisms $\{\lambda_u:u\in\U_E\}$ of a graph 
$C^*$-algebra $C^*(E)$ corresponding to unitaries commuting with the vertex projections 
(whose set we denote $\U_E$). 

In Section \ref{sectionF}, an analogue of the Weyl group for graph $C^*$-algebras is introduced 
and investigated. Namely, let $\D_E$ be the canonical abelian subalgebra of $C^*(E)$. Then, 
under a mild hypothesis, the group $\auto_{\D_E}(C^*(E))$ of those automorphisms of $C^*(E)$ 
which fix $\D_E$ point-wise is a maximal abelian subgroup of $\auto(C^*(E))$ (Propositions  
\ref{diagonalunitaries} and \ref{autonormalizer}). The Weyl group ${\mathfrak W}_E$ 
of $C^*(E)$ is defined as the quotient of the normalizer of $\auto_{\D_E}(C^*(E))$ by itself. 
Then we exhibit several structural properties of the Weyl group, analogous to those discovered by Cuntz 
in the case of $\O_n$, \cite{Cun2}. In particular, the Weyl group of $C^*(E)$ is countable and discrete 
(Proposition \ref{countableweyl}). 

In Section \ref{sectionR}, we investigate an important subgroup of the Weyl group corresponding 
to those automorphisms which globally preserve both the diagonal $\D_E$ and the core 
AF-subalgebra $\F_E$, the restricted Weyl group ${\mathfrak R}{\mathfrak W}_E$ of $C^*(E)$. 
We prove certain facts about the normalizer of $\F_E$ (Theorem \ref{normalizer}) and give a 
characterization of automorphisms globally preserving $\F_E$ (Lemma \ref{fautos}). We obtain 
a very convenient criterion of outerness for a large class of such automorphisms (Corollary \ref{outerness}). 
Then we take a closer look at the action on the diagonal $\D_E$ induced by the restriction 
of those automorphisms of the graph algebra which preserve both the diagonal and the core 
AF-subalgebra. By earlier results of Cuntz-Krieger and Matsumoto, \cite{CK,Mat1,Mat}, it is known 
that all shift commuting automorphisms of the diagonal $\D_E$ extend to automorphisms of $C^*(E)$. 
In the case of the Cuntz algebra, it was shown in \cite{CHS1} that all automorphisms of the 
diagonal which (along with their inverses) eventually commute with the shift can be extended 
to automorphisms of $\O_n$. In the present paper, we show that the automorphisms of $\D_E$ 
arising from restrictions of AF-subalgebra $\F_E$ preserving automorphisms of $C^*(E)$ 
eventually commute with the shift (Theorem \ref{eventuallycommute}). 
This provides a solid starting point for a future  complete determination
of the restricted Weyl group ${\mathfrak R}{\mathfrak W}_E$ of a graph $C^*$-algebra $C^*(E)$. 

In Section \ref{sectionL}, we study localized endomorphisms of a graph algebra $C^*(E)$, that is 
endomorphisms $\lambda_u$ corresponding to unitaries $u$ from the algebraic part of the 
core AF-subalgebra which commute with the vertex projections. 
We obtain an algorithmic criterion of invertibility of such localized endomorphisms  
(Theorem \ref{main}), as well as a criterion of invertibility of the restriction of a localized 
endomorphism to the diagonal MASA (Theorem \ref{maindiagonal}). These theorems  
extend the analogous results for Cuntz algebras obtained earlier in \cite{CS}. 
Localized endomorphisms constitute the best understood and most studied class 
of endomorphisms of the Cuntz algebras, and it is natural to begin systematic investigations 
of endomorphisms of graph $C^*$-algebras from them. 

In Section \ref{sectionP}, a special class of localized endomorphisms corresponding to permutation 
unitaries is investigated. Automorphisms of $C^*(E)$ of this type give rise to a large and 
interesting subgroup of the restricted Weyl group ${\mathfrak R}{\mathfrak W}_E$. For a 
permutative endomorphism $\lambda_u$ of 
a graph algebra $C^*(E)$, combinatorial criteria are given for $\lambda_u$ to be an automorphism of 
$C^*(E)$ or its restriction to be an automorphism of the diagonal $\D_E$ 
(Lemma \ref{conditionb}, Lemma \ref{conditiond} and Theorem \ref{combinatorialcriterion}). 
In this way, we obtain a far-reaching generalization of the techniques developed 
by Conti and Szyma{\'n}ski in \cite{CS} for dealing with permutative endomorphisms 
of the Cuntz algebras. A few examples are worked out in detail, illustrating applications of 
the combinatorial machinery developed in the present paper.  

\medskip
{\bf Acknowledgement.} We are grateful to the referee for very careful reading of the manuscript 
and a number of comments which helped to improve the presentation.  


\section{Notation and preliminaries}\label{sectionNP}

\subsection{Directed graphs and their $C^*$-algebras}

Let $E=(E^0,E^1,r,s)$ be a directed graph, where $E^0$ and $E^1$ are (countable) sets of vertices 
and edges, respectively, and $r,s:E^1\to E^0$ are range and source maps, respectively. 
The $C^*$-algebra $C^*(E)$ corresponding to a graph $E$ 
is by definition the universal $C^*$-algebra generated by mutually 
orthogonal projections $P_v$, $v\in E^0$, and partial isometries $S_e$, $e\in E^1$, 
subject to the following relations 
\begin{description}
\item{(GA1)} $S_e^*S_e=P_{r(e)}$ and $S_e^* S_f=0$ if $e\neq f\in E^1$, 
\item{(GA2)} $S_e S_e^*\leq P_{s(e)}$  for $e\in E^1$, 
\item{(GA3)} $P_v=\sum_{s(e)=v}S_e S_e^*$ if $v\in E^0$ emits finitely many 
and at least one edge. 
\end{description}
It follows from the above definition that a graph $C^*$-algebra $C^*(E)$ is unital if and only if the 
underlying graph $E$ has finitely many vertices, in which case $1=\sum_{v\in E^0}P_v$. 

A {\em path} $\mu$ of length $|\mu|=k\geq 1$ is a sequence 
$\mu=(\mu_1,\ldots,\mu_k)$ of $k$ edges $\mu_j$ such that 
$r(\mu_j)=s(\mu_{j+1})$ for $j=1,\ldots, k-1$. We also view the vertices as paths of length $0$.
The set of all paths of length $k$ is denoted $E^k$. The range and source maps naturally 
extend from edges $E^1$ to paths $E^k$. A {\em sink} is a vertex $v$ which emits 
no edges, i.e. $s^{-1}(v)=\emptyset$. A {\em source} is a vertex $w$ which receives no 
edges, i.e. $r^{-1}(v)=\emptyset$. By a {\em loop} we mean a path $\mu$ of 
length $|\mu|\geq 1$ such that $s(\mu)=r(\mu)$. We say that 
a loop $\mu=(\mu_1,\ldots,\mu_k)$ has an exit if there is a $j$ such that $s(\mu_j)$ 
emits at least two distinct edges. 

As usual, for a path $\mu=(\mu_1,\ldots,\mu_k)$ of length $k$ we denote by $S_\mu=
S_{\mu_1}\cdots S_{\mu_k}$ the corresponding partial isometry in $C^*(E)$. It is known 
that each $S_\mu$ is non-zero, with the domain projection $P_{r(\mu)}$. 
Then $C^*(E)$ is the closed span of 
$\{S_\mu S_\nu^*:\mu,\nu\in E^*\}$, where $E^*$ denotes the collection of 
all finite paths (including paths of length zero). Here we agree that $S_v=P_v$ for $v\in E^0$ 
viewed as path of length $0$. Also note that $S_\mu S_\nu^*$ is non-zero if and only if 
$r(\mu)=r(\nu)$. In that case, $S_\mu S_\nu^*$ is a partial isometry with domain and range projections 
equal to $S_\nu S_\nu^*$ and $S_\mu S_\mu^*$, respectively. 

The range projections $P_\mu=S_\mu S_\mu^*$ of all 
partial isometries $S_\mu$ mutually commute, and the abelian $C^*$-subalgebra of $C^*(E)$ 
generated by all of them is called the diagonal subalgebra and denoted $\D_E$. 
If $E$ does not contain sinks and all loops have exits then 
$\D_E$ is a MASA (maximal abelian subalgebra) in $C^*(E)$ by \cite[Theorem 5.2]{HPP}. 
We set $\D^0_E = {\rm span}\{P_v  :  v\in E^0  \}$ and, more generally, 
$\D_E^k= {\rm span}\{P_\mu  :  \mu\in E^k  \}$ for $k\geq 0$. 

There exists a strongly continuous action $\gamma$ of the circle group $U(1)$ on $C^*(E)$, 
called the {\em gauge action}, such that $\gamma_t(S_e)=tS_e$ and $\gamma_t(P_v)=P_v$ 
for all $e\in E^1$, $v\in E^0$ and $t\in U(1)\subseteq\bC$. 
The fixed-point algebra $C^*(E)^\gamma$ for the gauge action is an AF-algebra, denoted 
$\F_E$ and called the core AF-subalgebra of $C^*(E)$. $\F_E$ is the closed span of 
$\{S_\mu S_\nu^*:\mu,\nu\in E^*,\;|\mu|=|\nu|\}$. For $k\in\bN=\{0,1,2,\ldots\}$ 
we denote by $\F_E^k$ the linear span of $\{S_\mu S_\nu^*:\mu,\nu\in E^*,\;|\mu|=|\nu|= k\}$. 
For an integer $m\in\bZ$ we denote by $C^*(E)^{(m)}$ the spectral subspace of the gauge 
action corresponding to $m$. That is,  $C^*(E)^{(m)}:=\{x\in C^*(E):\gamma_z(x)=z^m x,\,\forall z\in U(1)\}$. 
In particular, $C^*(E)^{(0)}=C^*(E)^\gamma$. 


\subsection{Endomorphisms determined by unitaries}

We denote by $\U_E$ the collection of all those unitaries in the multiplier algebra $M(C^*(E))$ which 
commute with all vertex projections $P_v$, $v\in E^0$. That is 
\begin{equation}\label{ue}
\U_E:=\U((\D_E^0)'\cap M(C^*(E))). 
\end{equation}
These unitaries will play a crucial role throughout this paper. Let $u\in\U_E$. 
Then $uS_e$, $e\in E^1$, are partial isometries in $C^*(E)$ which together with 
projections $P_v$, $v\in E^0$, satisfy (GA1)--(GA3). Thus, by universality in the definition of $C^*(E)$, 
there exists a $*$-homomorphism $\lambda_u:C^*(E)\to C^*(E)$ such that\footnote{The reader should be 
aware that in some papers (e.g. in \cite{Cun2}, \cite{Szy} and \cite{CS}) a different convention: 
$\lambda_u(S_e)=u^* S_e$ is used.}
\begin{equation}\label{lambda}
\lambda_u(S_e)=u S_e \;\;\; {\rm and}\;\;\;  \lambda_u(P_v)=P_v, \;\;\;  {\rm for}\; e\in E^1,\; v\in E^0. 
\end{equation}
Clearly, $\lambda_u(1)=1$ whenever $C^*(E)$ is unital. In general, $\lambda_u$ may be neither injective 
nor surjective. However, the following proposition is an immediate consequence of the gauge-invariant 
uniqueness theorem \cite[Theorem 2.1]{BHRS}. 

\begin{proposition}\label{coreinjectivity}
If $u\in\U_E$ belongs to the minimal unitization of the core AF-subalgebra 
$\F_E$, then endomorphism $\lambda_u$ is automatically injective. 
\end{proposition}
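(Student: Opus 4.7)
The plan is to verify the two hypotheses of the gauge-invariant uniqueness theorem \cite[Theorem 2.1]{BHRS} for the $*$-homomorphism $\lambda_u$, namely (i) non-vanishing on the vertex projections and (ii) equivariance with respect to the gauge action.

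For (i), this is immediate from the defining formula (\ref{lambda}): $\lambda_u(P_v)=P_v\neq 0$ for every $v\in E^0$, so in particular $\lambda_u$ is injective on the collection $\{P_v:v\in E^0\}$.

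For (ii), the key observation is that since $u$ lies in the minimal unitization of $\F_E=C^*(E)^\gamma$, we have $\gamma_t(u)=u$ for every $t\in U(1)$. Then on generators,
\[
\gamma_t(\lambda_u(S_e))=\gamma_t(uS_e)=\gamma_t(u)\gamma_t(S_e)=u(tS_e)=t\lambda_u(S_e)=\lambda_u(tS_e)=\lambda_u(\gamma_t(S_e)),
\]
while $\gamma_t(\lambda_u(P_v))=\gamma_t(P_v)=P_v=\lambda_u(\gamma_t(P_v))$. Since $\lambda_u$ and $\gamma_t$ are both $*$-homomorphisms and the $S_e,P_v$ generate $C^*(E)$, this extends to an intertwining identity $\gamma_t\circ\lambda_u=\lambda_u\circ\gamma_t$ on all of $C^*(E)$. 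The action $t\mapsto\gamma_t$ on the image $\lambda_u(C^*(E))$ (viewed as the restriction of $\gamma$) is thus a strongly continuous gauge action making $\lambda_u$ equivariant.

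With both hypotheses in hand, \cite[Theorem 2.1]{BHRS} applies and forces $\lambda_u$ to be injective. I do not expect any real obstacle here; the only point to be mildly careful about is that the hypothesis ``$u$ is in the minimal unitization of $\F_E$'' is used precisely to obtain gauge-invariance of $u$, which in turn is exactly what makes $\lambda_u$ gauge-equivariant. Without this hypothesis the same argument breaks down, as $\gamma_t(u)$ would in general differ from $u$ and one could only say $\gamma_t\circ\lambda_u=\lambda_{\gamma_t(u)}\circ\gamma_t$.
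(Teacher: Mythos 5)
Your proof is correct and follows exactly the route the paper intends: the authors state the proposition as an ``immediate consequence'' of the gauge-invariant uniqueness theorem of \cite{BHRS}, and the content of that remark is precisely your two checks, namely $\lambda_u(P_v)=P_v\neq 0$ and the gauge-equivariance $\gamma_t\circ\lambda_u=\lambda_u\circ\gamma_t$ coming from $\gamma_t(u)=u$. Nothing further is needed.
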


Note that $\{\lambda_u:u\in\U_E\}$  is a semigroup with the following multiplication law:
\begin{equation}\label{convolution}
\lambda_u\circ\lambda_w = \lambda_{u*w}, \;\;\; u*w=\lambda_u(w)u. 
\end{equation}
We say $\lambda_u$ is {\em invertible} if $\lambda_u$ is an automorphism of $C^*(E)$. 
For $K\subseteq\U_E$ we denote $\lambda(K)^{-1}:=\{\lambda_u\in\auto(C^*(E)):u\in K\}$. 
It turns out that $\lambda_u$ is invertible if and only if it is injective and $u^*$ is in the range of 
its unique strictly continuous extension to $M(C^*(E))$, still denoted $\lambda_u$. (Such an extension 
exists since $\lambda_u$ fixes each vertex projection and thus it fixes an approximate unit comprised 
of finite sums of vertex projections.) 
Indeed, if $\lambda_u$ is injective and there exists a $w\in M(C^*(E))$ 
such that $\lambda_u(w)=u^*$ then $w$ must belong to $\U_E$ and $\lambda_u\lambda_w=
\lambda_{u*w}=\id$. Thus $\lambda_u$ is surjective and hence an automorphism. In this case, we 
have $\lambda_u^{-1}=\lambda_w$. 

In the present paper, we mainly deal with {\em finite graphs without sinks.} 
If $E$ is such a graph then the mapping $u\mapsto\lambda_u$ establishes 
a bijective correspondence between $\U_E$ and the semigroup of those unital $*$-homomorphisms 
from $C^*(E)$ into itself which fix all the vertex projections $P_v$, $v\in E^0$. Indeed, 
if $\rho$ is such a homomorphism then $u:=\sum_{e\in E^1}\rho(S_e)S_e^*$ belongs to 
$\U_E$ and $\rho=\lambda_u$ (cf. \cite{Z}).  If $u\in\F_E^1\cap\U_E$ then $\lambda_u$ 
is automatically invertible with inverse $\lambda_{u^*}$ and the map 
\begin{equation}\label{quasifree}
\F_E^1\cap\U_E \ni u\mapsto \lambda_u \in\auto(C^*(E))
\end{equation} 
is a group homomorphism with range inside 
the subgroup of {\em quasi-free automorphisms} of $C^*(E)$, see \cite{DS,Z}. 

If $\lambda_u$ is an endomorphism of $C^*(E)$  corresponding to a unitary $u$ in 
the linear span  of $\{S_\mu S_\nu^*:\mu,\nu\in E^*,\;|\mu|=|\nu|\}$ and the identity
(i.e. in the minimal unitization of the algebraic part of the core AF-subalgebra), then 
we call $\lambda_u$ {\em localized}, cf. \cite{CP}. 

Let $E$ be a finite graph without sinks, and let
\begin{equation}\label{shift}
\varphi(x)=\sum_{e\in E^1} S_e x S_e^* 
\end{equation} 
be the usual {\em shift} on $C^*(E)$, \cite{CK}. It is a unital, completely positive map. One can 
easily verify that the shift 
is an injective $*$-homomorphism when restricted to the relative commutant of $\{P_v:v\in E^0\}$. We 
will denote this relative commutant by $B_E$, i.e.  
\begin{equation}\label{be}
B_E:=(\D_E^0)' \cap C^*(E). 
\end{equation}
In particular, $\U_E=\U(B_E)$. We have 
$\varphi(B_E)\subseteq B_E$ and thus $\varphi(\U_E)\subseteq\U_E$. It is also clear that 
$\varphi(\F_E)\subseteq\F_E$ and $\varphi(\D_E)\subseteq\D_E$. For $k\geq 1$ we denote 
\begin{equation}\label{uk}
u_k := u\varphi(u)\cdots\varphi^{k-1}(u),  
\end{equation}
and agree that $u_k^*$ stands for $(u_k)^*$. For each $u\in\U_E$ and 
for any two paths $\mu,\nu\in E^*$ we have 
\begin{equation}\label{uaction}
\lambda_u(S_\mu S_\nu^*)=u_{|\mu|}S_\mu S_\nu^*u_{|\nu|}^*. 
\end{equation}
The above equality is established with help of the identity $S_e u = \varphi(u) S_e$, which holds for 
all $e\in E^1$ because the unitary $u$ commutes with all vertex projections $P_v$, $v\in E^0$, 
by hypothesis. Indeed,
$$ \varphi(u)S_e=\Big(\sum_{f\in E^1}S_f u S_f^*\Big)S_e=S_euS_e^*S_e
   =S_euP_{r(e)}=S_eP_{r(e)}u=S_eu. $$
More generally, if $x\in C^*(E)$ commutes with all vertex projections then 
\begin{equation}\label{commutation}
S_\alpha x=\varphi^{|\alpha|}(x)S_\alpha
\end{equation}
for each finite path $\alpha$. Furthermore, we have 
\begin{equation}\label{inner}
\Ad(u)=\lambda_{u\varphi(u^*)} \;\;\; \text{for}\; u\in\U_E, 
\end{equation}
where $\Ad(u)(x)=uxu^*$, $x\in C^*(E)$. 


\section{The Weyl group}\label{sectionF}

For algebras $A\subseteq B$ (with a common identity element) 
we denote by $\N_B(A)=\{u\in\U(B):uAu^*=A\}$ the normalizer
of $A$ in $B$, and by $A' \cap B=\{b \in B: (\forall a \in A) \; ab=ba\}$ the 
relative commutant of $A$ in $B$. 

\begin{proposition}\label{normalDautos}
Let $E$ be a finite graph without sinks, and let $u\in\U_E$. Then the following hold: 
\begin{description}
\item{\hspace{1mm}(1)} If $u\D_E u^*\subseteq \D_E$ then $\lambda_u(\D_E)\subseteq\D_E$.  
\item{(2)} If $\lambda_u(\D_E)=\D_E$ then $u\D_Eu^*\subseteq\D_E$. 
\end{description} 
\end{proposition}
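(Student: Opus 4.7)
The plan is to derive both parts from a single recursive identity that realizes $\lambda_u(P_\mu)$ as a $u$-conjugate of an element that naturally lives in $\D_E$. To set things up, I would factor
$u_k \;=\; u\cdot\varphi(u_{k-1})$
and exploit the commutation relation $\varphi(x)S_e = S_e x$ which holds whenever $x$ commutes with the vertex projections (a direct consequence of (\ref{commutation})), together with its adjoint $S_e^*\varphi(x) = xS_e^*$. For a path $\mu = e\nu$ of length $k\geq 1$, substituting $P_\mu = S_e P_\nu S_e^*$ into $u_k P_\mu u_k^*$ and moving $\varphi(u_{k-1})$ past $S_e$ (and past $S_e^*$ on the right) then yields
$$ \lambda_u(P_\mu)\;=\;u\,Y_\mu\,u^*,\qquad Y_\mu\;:=\;\varphi(u_{k-1})\,P_\mu\,\varphi(u_{k-1})^*\;=\;S_e\,\lambda_u(P_\nu)\,S_e^*.$$

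For (1), I would argue by induction on $|\mu|$ that $\lambda_u(P_\mu)\in\D_E$, with base case $\lambda_u(P_v)=P_v$. In the inductive step, the inductive hypothesis $\lambda_u(P_\nu)\in\D_E$, together with the elementary observation that $S_e\D_E S_e^*\subseteq\D_E$ (because $S_e P_\beta S_e^*$ equals either $P_{e\beta}$ or $0$), places $Y_\mu$ in $\D_E$; the running hypothesis $u\D_E u^*\subseteq\D_E$ then gives $\lambda_u(P_\mu) = uY_\mu u^*\in\D_E$. Since $\D_E = \clsp\{P_\mu:\mu\in E^*\}$ and $\lambda_u$ is continuous, this forces $\lambda_u(\D_E)\subseteq\D_E$.

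For (2), the hypothesis $\lambda_u(\D_E)=\D_E$ already includes the inclusion $\lambda_u(\D_E)\subseteq\D_E$, so the same recursion places $Y_\mu$ in $\D_E$ for every path $\mu$, without any induction. Rewriting the key identity as $Y_\mu = u^*\lambda_u(P_\mu)\,u$ and invoking the surjectivity half of the hypothesis,
$$ u^*\D_E u\;=\;u^*\,\clsp\{\lambda_u(P_\mu):\mu\in E^*\}\,u\;=\;\clsp\{Y_\mu:\mu\in E^*\}\;\subseteq\;\D_E, $$
which rearranges (by conjugating both sides) to $\D_E\subseteq u\D_E u^*$. Since $u\D_E u^*$ is abelian as the image of the abelian algebra $\D_E$ under the automorphism $\Ad(u)$, and $\D_E$ is a MASA in $C^*(E)$ under the standing ``mild hypothesis'' of Section \ref{sectionF} (namely, every loop in $E$ has an exit), maximality forces $u\D_E u^* = \D_E$, giving the desired inclusion.

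The principal obstacle is the closing step of (2): the recursion naturally delivers the ``opposite'' inclusion $u^*\D_E u\subseteq\D_E$, and converting this into $u\D_E u^*\subseteq\D_E$ really does require maximality of $\D_E$ as an abelian subalgebra. This is precisely where the MASA property of $\D_E$ enters and cannot be avoided by the direct recursion alone.
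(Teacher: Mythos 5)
Your argument for part (1) is correct and follows essentially the standard route: induction on $|\mu|$ via the identity $\lambda_u(P_{e\nu})=u\,S_e\lambda_u(P_\nu)S_e^*\,u^*$, which is the Cuntz-style argument the paper itself invokes for this part.

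Part (2), however, has a genuine gap, and you have in fact put your finger on it yourself. Your recursion delivers only $u^*\D_E u\subseteq\D_E$, equivalently $\D_E\subseteq u\D_E u^*$, which is the \emph{reverse} of the claimed inclusion, and your only mechanism for flipping it is maximality of $\D_E$ as an abelian subalgebra. But the proposition is stated for an arbitrary finite graph without sinks; the hypothesis that every loop has an exit is \emph{not} assumed here (it is only added in the results that follow, and the paper explicitly records the MASA-based strengthening $u\D_Eu^*=\D_E$ as a separate remark after the proposition). Without that hypothesis $\D_E$ need not be a MASA --- already for a single vertex with a single loop one has $\D_E=\bC 1$ inside $C(\bT)$ --- so the closing step of your argument is unavailable under the stated hypotheses. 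The paper's proof of (2) sidesteps this entirely: $\D_E$ is generated by $\D_E^1$ together with $\varphi(\D_E)$ (since $P_{e\nu}=P_e\,\varphi(P_\nu)$), and one checks directly that $u\,\D_E^1\,u^*=\lambda_u(\D_E^1)\subseteq\D_E$ and $u\,\varphi(\D_E)\,u^*=u\,\varphi(\lambda_u(\D_E))\,u^*=\lambda_u(\varphi(\D_E))\subseteq\D_E$, whence $u\D_Eu^*\subseteq\D_E$ with no appeal to maximality. To repair your write-up you should either replace your final step by an argument of this generating-set type, or explicitly strengthen the hypotheses to include the exit condition (at the cost of proving a less general statement than the one asserted).
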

\begin{proof}
Part (1) is established in the same way as the analogous statement about $\O_n$ in \cite{Cun2}. 

For part (2), first note that 
$\D_E$ is a $C^*$-algebra generated by $\{\varphi^k(S_e S_e^*) : e \in E^1, \;k=0,1,2,...\}$, 
where $\varphi^0=\id$.
In particular, $\D_E$ is generated by $\D_E^1$ and $\varphi(\D_E)$. Now, if $u\in \U_E$ and 
$\lambda_u(\D_E) = \D_E$ then $u \D_E^1 u^* = \lambda_u(\D_E^1) \subseteq \D_E$ and 
$u \varphi(\D_E) u^* = u\varphi(\lambda_u(\D_E))u^* = \lambda_u(\varphi(\D_E))\subseteq\D_E$. 
Hence $u\D_E u^*\subseteq\D_E$. 
\end{proof}

Further note that if $E$ is a finite graph without sinks in which every loop has an exit 
then $\D_E$ is a MASA in $C^*(E)$ and in part (2) of Proposition \ref{normalDautos} we can 
further conclude that $u\D_E u^*=\D_E$, i.e. that $u\in\N_{C^*(E)}(\D_E)$. In that case, 
by virtue of \cite[Theorem 10.1]{HPP}, every $u\in\N_{C^*(E)}(\D_E)$ can 
be uniquely written as $u=dw$, where $d\in\U(\D_E)$ and $w\in\SS_E$. Here $\SS_E$ denotes 
the group of all  unitaries in $\U(C^*(E))$ of the form $\sum S_\alpha S_\beta^*$ (finite sum). 
Therefore, one can show, as in \cite[Section 2]{CS} that the normalizer of the diagonal in $C^*(E)$ is 
a semi-direct product \begin{equation}\label{bignormalizer}\N_{C^*(E)}(\D_E) = \U(\D_E) \rtimes \SS_E. 
\end{equation}

For $C^*$-algebras $A\subseteq B$ (with a common identity), we denote by $\auto(B,A)$ the collection 
of all those automorphisms $\alpha$ of $B$ such that $\alpha(A)=A$, and by $\auto_A(B)$ 
those automorphisms of $B$ which fix $A$ point-wise. Similarly, $\endo_A(B)$ denotes the collection 
of all unital $*$-homomorphisms of $B$ which fix $A$ point-wise. 

\begin{proposition}\label{diagonalunitaries}
Let $E$ be a finite graph without sinks in which every loop has an exit. Then the mapping 
$u\mapsto\lambda_u$ establishes a group isomorphism 
$$ \endo_{\D_E}(C^*(E))= \auto_{\D_E}(C^*(E)) \cong  \U(\D_E). $$

\end{proposition}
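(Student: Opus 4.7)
The plan is to establish three assertions that together yield both the identity $\endo_{\D_E}(C^*(E)) = \auto_{\D_E}(C^*(E))$ and the isomorphism with $\U(\D_E)$: (a) each $u \in \U(\D_E)$ produces an automorphism $\lambda_u$ fixing $\D_E$, with $u \mapsto \lambda_u$ a group homomorphism; (b) this map is injective; (c) every $\rho \in \endo_{\D_E}(C^*(E))$ arises as $\lambda_u$ for some $u \in \U(\D_E)$.

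For (a), observe $\U(\D_E) \subseteq \U_E$ since $\D_E^0 \subseteq \D_E$. Because $\varphi$ preserves $\D_E$, each $\varphi^j(u) \in \D_E$, so $u_k \in \D_E$ commutes with every $P_\mu \in \D_E^k$; formula (\ref{uaction}) then gives $\lambda_u(P_\mu) = P_\mu$. The composition law (\ref{convolution}) specialises, for $u,v \in \U(\D_E)$, to $u*v = \lambda_u(v)u = vu = uv$, so $\lambda_u \circ \lambda_v = \lambda_{uv}$ and $\lambda_u$ has two-sided inverse $\lambda_{u^*}$; injectivity of $\lambda_u$ is also covered by Proposition \ref{coreinjectivity} since $u \in \U(\F_E)$. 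For (b), the identity $u = \sum_{e \in E^1} \lambda_u(S_e) S_e^*$, valid because $E$ is finite without sinks (so $\sum_e S_e S_e^* = 1$), recovers $u$ from $\lambda_u$.

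The substantive part is (c). Given $\rho \in \endo_{\D_E}(C^*(E))$, the discussion preceding (\ref{quasifree}) identifies $\rho$ with $\lambda_u$ for the unique $u = \sum_{e} \rho(S_e) S_e^* \in \U_E$, so everything reduces to showing $u \in \D_E$. By the standing hypotheses, $\D_E$ is a MASA by \cite[Theorem 5.2]{HPP}, so it is enough to check that $u$ commutes with $\D_E^k$ for every $k$. Since $\rho(P_\mu) = P_\mu$ and, by (\ref{uaction}), $\rho(P_\mu) = u_{|\mu|} P_\mu u_{|\mu|}^*$, the element $u_k$ commutes with $\D_E^k$ for all $k \geq 0$. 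I would then prove by induction on $k$ that $u$ itself commutes with $\D_E^k$. The base $k = 0$ is exactly $u \in \U_E$. For the inductive step, a direct computation using the intertwining rule (\ref{commutation}) yields, for $|\mu| = k$ and $1 \leq j \leq k-1$, the equivalence $[\varphi^j(u), P_\mu] = 0 \iff [u, P_{\mu_{j+1} \cdots \mu_k}] = 0$; the latter holds by the inductive hypothesis because $P_{\mu_{j+1} \cdots \mu_k} \in \D_E^{k-j}$ with $k - j \leq k - 1$. Consequently $\varphi^j(u)$ commutes with $\D_E^k$ for $1 \leq j \leq k - 1$, and since $u_k$ does as well, the element $u = u_k \bigl(\varphi(u) \cdots \varphi^{k-1}(u)\bigr)^{-1}$ commutes with $\D_E^k$ too. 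Passing to the norm closure gives $u \in \D_E' \cap C^*(E) = \D_E$.

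The principal obstacle is precisely this inductive step: the hypothesis supplies only the commutation of the telescoped product $u_k$ with $\D_E^k$, and one has to peel off the higher factors $\varphi(u), \ldots, \varphi^{k-1}(u)$ one level at a time, using the lower-order commutations already secured by the inductive hypothesis, in order to isolate the commutation of $u$ itself with $\D_E^k$.
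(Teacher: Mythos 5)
Your proof is correct and follows essentially the same route as the paper: the homomorphism property and pointwise fixing of $\D_E$ via (\ref{uaction}), injectivity from $u=\sum_e\lambda_u(S_e)S_e^*$, and surjectivity by showing the unitary $u$ implementing a given $\rho\in\endo_{\D_E}(C^*(E))$ commutes with all $P_\mu$ by induction on $|\mu|$ and then invoking the MASA property of $\D_E$. The only difference is that you spell out the inductive step (peeling the factors $\varphi^j(u)$ off $u_k$ using the lower-order commutations), which the paper leaves to the reader.
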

\begin{proof}
It follows immediately from formula (\ref{convolution}) that the mapping $u\mapsto\lambda_u$ 
is a group homomorphism from $\U(\D_E)$ into $\auto(C^*(E))$. If $\mu$ is a finite path 
then $\lambda_u(P_\mu)=\Ad(u_{|\mu|})(P_\mu)$ by (\ref{uaction}). Hence, if $u\in\D_E$, then 
$\lambda_u$ fixes $\D_E$ point-wise. Consequently, the mapping $u\mapsto\lambda_u$ is a well-defined 
group homomorphism into $\auto_{\D_E}(C^*(E))$. The map is one-to-one, 
as already noted in Section \ref{sectionNP}. To see that the map is onto $\endo_{\D_E}(C^*(E))$, 
recall (again from Section 2) that every endomorphism fixing the vertex projections 
is of the form $\lambda_w$ for some $w\in\U_E$. Since $\lambda_w$ fixes $\D_E$ point-wise, 
proceeding by induction on $|\mu|$ one shows that $w$ commutes 
with all projections $P_\mu$. Thus $w\in\U(\D_E)$, since $\D_E$ is a MASA in $C^*(E)$ 
by \cite[Theorem 5.2]{HPP}. 
\end{proof}

We note that under the hypothesis of Proposition \ref{diagonalunitaries} the fixed-point 
algebra for the action of $\auto_{\D_E}(C^*(E))$ on $C^*(E)$ equals $\D_E$. Indeed, 
each element in this fixed-point algebra is also fixed by all $\Ad(u)$, $u\in\U(\D_E)$, and thus belongs 
to $\D_E$. 

\smallskip
For the proof of the following Proposition \ref{autonormalizer}, we note the following simple fact. If $E$ is 
a finite graph without sinks in which every loop has an exit then $\D_E$ does not contain minimal 
projections. Indeed, if $p$ is a non-zero projection in $\D_E$ then there exists a path $\alpha$ such that 
$P_\alpha\leq p$. Extend $\alpha$ to a path ending at a loop, and then further to a path $\beta$ which 
ends at a vertex emitting at least two edges, say $e$ and $f$.  Then for the path $\mu=\beta e$ we have 
$P_{\mu}\lneq P_\alpha\leq p$. 

\begin{proposition}\label{autonormalizer}
Let $E$ be a finite graph without sinks in which every loop has an exit. Then  the 
normalizer of $\auto_{\D_E}(C^*(E))$ in $\auto(C^*(E))$ coincides with $\auto(C^*(E),\D_E)$. 
If, in addition, the center of $C^*(E)$ is trivial, then $\auto_{\D_E}(C^*(E))$ is 
a maximal abelian subgroup of $\auto(C^*(E))$.  
\end{proposition}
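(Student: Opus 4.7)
The plan is to prove the two statements in turn.

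For the first, the inclusion $\auto(C^*(E),\D_E)\subseteq\N_{\auto(C^*(E))}(\auto_{\D_E}(C^*(E)))$ is immediate: if $\alpha\in\auto(C^*(E),\D_E)$ and $\beta$ fixes $\D_E$ pointwise, then $\alpha\beta\alpha^{-1}(d)=\alpha(\alpha^{-1}(d))=d$ for every $d\in\D_E$, because $\alpha^{-1}(d)\in\D_E$. For the reverse inclusion I would use the observation recorded just after Proposition \ref{diagonalunitaries}: the fixed point algebra of $\auto_{\D_E}(C^*(E))$ acting on $C^*(E)$ equals $\D_E$. If $\alpha$ normalizes $\auto_{\D_E}(C^*(E))$, then for every $d\in\D_E$ and every $\beta\in\auto_{\D_E}(C^*(E))$ the conjugate $\alpha^{-1}\beta\alpha$ lies again in $\auto_{\D_E}(C^*(E))$ and hence fixes $d$; this gives $\beta(\alpha(d))=\alpha(d)$, so $\alpha(d)$ is a common fixed point and therefore belongs to $\D_E$. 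Applying the same reasoning to $\alpha^{-1}$ yields $\alpha(\D_E)=\D_E$.

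For the second statement, $\auto_{\D_E}(C^*(E))$ is already abelian, being isomorphic as a group to the abelian group $\U(\D_E)$ by Proposition \ref{diagonalunitaries}. Suppose $\alpha\in\auto(C^*(E))$ centralizes $\auto_{\D_E}(C^*(E))$. Then $\alpha$ normalizes it, so by part (1) we have $\alpha(\D_E)=\D_E$. The crucial input is that for every $u\in\U(\D_E)$ the inner automorphism $\Ad(u)$ belongs to $\auto_{\D_E}(C^*(E))$, since $\D_E$ is abelian. The commutation relation $\alpha\circ\Ad(u)=\Ad(u)\circ\alpha$ is equivalent to $\Ad(\alpha(u))=\Ad(u)$, hence $u^{*}\alpha(u)\in Z(C^*(E))$; by the triviality hypothesis this is just $\mathbb{C}\cdot 1$, and so $\alpha(u)=z(u)\,u$ for some scalar $z(u)\in U(1)$.

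To finish I would promote the relation $\alpha(u)=z(u)u$ to $\alpha(u)=u$ by testing it on the $2\pi$-periodic one-parameter families $u_t:=e^{it}p+(1-p)$, where $p$ runs over the projections of $\D_E$. Since $t\mapsto u_t$ is a continuous homomorphism $\mathbb{R}\to\U(\D_E)$ factoring through $\mathbb{R}/2\pi\mathbb{Z}$, the composition $t\mapsto z(u_t)$ is a continuous character of $\mathbb{R}/2\pi\mathbb{Z}$, hence of the form $e^{int}$ for some $n=n_p\in\mathbb{Z}$. The identity $\alpha(u_t)=z(u_t)u_t$ thus reads
\begin{equation*}
e^{it}\alpha(p)+(1-\alpha(p))=e^{i(n+1)t}p+e^{int}(1-p),
\end{equation*}
and comparing Fourier coefficients in $t$, together with the normalizations $\alpha(0)=0$ and $\alpha(1)=1$, a brief case analysis on the integer $n$ forces $n=0$ and $\alpha(p)=p$ for every projection $p\in\D_E$. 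Since $\D_E$ is an AF-algebra and therefore the closed linear span of its projections, this gives $\alpha|_{\D_E}=\id$, so $\alpha\in\auto_{\D_E}(C^*(E))$. The real obstacle is this final step: triviality of the center only buys the scalar ambiguity $z(u)$, and eliminating the residual character $z$ needs the auxiliary one-parameter subgroups together with the Fourier-coefficient comparison above; the rest is a formal consequence of Proposition \ref{diagonalunitaries} and part (1).
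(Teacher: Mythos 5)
Your proof is correct. The first half coincides with the paper's argument: both reduce the normalizer computation to the observation, recorded just after Proposition \ref{diagonalunitaries}, that the fixed-point algebra of $\auto_{\D_E}(C^*(E))$ acting on $C^*(E)$ equals $\D_E$ (which is where the MASA property, and hence the hypothesis on loops, enters). The second half also starts identically --- commutation with all $\Ad(u)$, $u\in\U(\D_E)$, forces $u^*\alpha(u)$ into the centre and hence makes it a scalar $z(u)$ --- but you eliminate the residual character $z$ by a genuinely different device. The paper tests $\alpha$ on the self-adjoint unitaries $2p-1$, obtaining $\alpha(p)\in\{p,1-p\}$, and rules out $\alpha(p)=1-p$ by passing to a proper nonzero subprojection $q\lneq p$; that step relies on the auxiliary fact, established immediately before the proposition, that $\D_E$ contains no minimal projections (again using that every loop has an exit). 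You instead test $\alpha$ on the one-parameter groups $u_t=e^{it}p+(1-p)$, identify $z(u_t)=e^{int}$ as a continuous character of $U(1)$, and compare Fourier coefficients; the case analysis on $n$ is exactly as you indicate (for $p\neq 0,1$ the frequencies $\{n,n+1\}$ must match $\{0,1\}$, and $n=\pm1$ each force $p=0$), so $n=0$ and $\alpha(p)=p$. Your route is a bit longer to write out but does not need the ``no minimal projections'' lemma, so that portion of the argument would survive in any setting where $\D_E$ is a MASA but might contain minimal projections. Both arguments conclude identically via Proposition \ref{diagonalunitaries}.
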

\begin{proof}
Let $\alpha\in\auto(C^*(E))$ be in the normalizer of $\auto_{\D_E}(C^*(E))$. Thus for each 
$w\in\U(\D_E)$ there is a $u\in\U(\D_E)$ such that $\alpha\lambda_u=\lambda_w\alpha$. Then 
$\alpha(d)=\alpha\lambda_u(d)=\lambda_w\alpha(d)$ for all $d\in\D_E$. 
Whence $\alpha(\D_E)\subseteq\D_E$. Replacing $\alpha$ with $\alpha^{-1}$ we get the 
reverse inclusion, and thus $\alpha(\D_E)=\D_E$. This proves the first part of the proposition. 

Now let $\alpha\in\auto(C^*(E))$ commute with all elements of $\auto_{\D_E}(C^*(E))$. Then, 
in particular, $\Ad(u)\alpha(x)=\alpha\Ad(u)(x)$ for all $u\in\U(\D_E)$ and $x\in C^*(E)$. Thus 
$u^*\alpha(u)$ belongs to the center of $C^*(E)$ and hence it is a scalar. Therefore 
for each projection $p\in\D_E$ we have $\alpha(2p-1)=
\pm(2p-1)$, and hence $\alpha(p)$ equals either $p$ or $1-p$. 
The case $\alpha(p)=1-p$ is impossible, since taking a projection $0\neq q\lneq p$ we would get 
$\alpha(q)\leq 1-p $ and thus $\alpha(q)\not\in\{q,1-q\}$. Hence $\alpha$ fixes all projections, and 
thus all elements of $\D_E$. The claim now follows from Proposition \ref{diagonalunitaries}. 
\end{proof}

The quotient of $\auto(C^*(E),\D_E)$ by $\auto_{\D_E}(C^*(E))$ will be called the 
{\em Weyl group} of $C^*(E)$ (cf. \cite{Cun2}), and denoted ${\mathfrak W}_E$. That is,  
\begin{equation}\label{weyle}
{\mathfrak W}_E := \auto(C^*(E),\D_E)/\auto_{\D_E}(C^*(E)).
\end{equation}

\begin{remark}\label{zenter}\rm 
Since triviality of the center of $C^*(E)$ plays a role in our considerations, it is worthwile 
to mention the following. If $E$ is a finite graph without sinks in which every loop has an exit then 
the following conditions are equivalent. 
\begin{description}
\item{(i)} The center of $C^*(E)$ is trivial. 
\item{(ii)} $C^*(E)$ is indecomposable. That is, there is no decomposition 
$C^*(E)\cong A\oplus B$ into direct sum of two non-zero $C^*$-algebras. 
\item{(iii)} There are no two non-empty, disjoint, hereditary and saturated subsets $X$ and $Y$ of $E^0$ 
such that: for each vertex $v\in E^0\setminus (X\cup Y)$ there is no loop through $v$, and there exist 
paths from $v$ to both $X$ and $Y$. 
\end{description}
Indeed, it is shown in \cite{H} that conditions (ii) and (iii) above are equivalent. Obviously, (i) implies (ii). To  
see that (ii) implies (i), we first note that in the present case $\D_E$ is a MASA, \cite{HPP}, and thus it 
contains the center $\Z(C^*(E))$ of $C^*(E)$. Thus, the (closed, two-sided) ideal of $C^*(E)$ generated 
by any element of $\Z(C^*(E))$ is gauge-invariant. But for a finite graph $E$, $C^*(E)$ contains only finitely 
many gauge-invariant ideals, \cite{HR} (see also \cite{BHRS} for the complete description of gauge-invariant 
ideals for arbitrary graphs). Hence $\Z(C^*(E))$ is finite dimensional. Thus if $\Z(C^*(E))$ is non-trivial then it 
contains a non-trivial projection and consequently $C^*(E)$ is decomposable. 
\end{remark}

Just as in the case of the Cuntz algebras, the Weyl group of a graph algebra turns out to be countable. 

\begin{proposition}\label{countableweyl}
Let $E$ be a finite graph. Then the Weyl group ${\mathfrak W}_E$  is countable. 
\end{proposition}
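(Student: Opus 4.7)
The plan is to show that $\mathfrak{W}_E$ injects into a countable group, closely following the strategy Cuntz used for $\O_n$ in \cite{Cun2}.

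First I would observe that the restriction map $\pi : \auto(C^*(E),\D_E) \to \auto(\D_E)$ sending $\alpha$ to $\alpha|_{\D_E}$ is a group homomorphism whose kernel is precisely $\auto_{\D_E}(C^*(E))$, by definition. Hence $\pi$ descends to an injection $\mathfrak{W}_E \hookrightarrow \auto(\D_E)$, reducing the problem to showing that the image of $\pi$ is countable.

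To establish countability of the image, I would invoke the normalizer decomposition (\ref{bignormalizer}), $\N_{C^*(E)}(\D_E) = \U(\D_E) \rtimes \SS_E$, in which $\SS_E$ is manifestly a countable group (its elements are unitaries of the form $\sum S_\mu S_\nu^*$, a finite sum indexed by finite paths in $E^*$). The strategy is then to show that every coset $[\alpha] \in \mathfrak{W}_E$ admits a representative of a combinatorially finite form, parametrized by data in $\SS_E$. Concretely, for each edge $e \in E^1$, the partial isometry $\alpha(S_e)$ has source and range projections $\alpha(P_{r(e)})$ and $\alpha(P_e)$ lying in $\D_E$, and satisfies $\alpha(S_e) \D_E \alpha(S_e)^* \subseteq \D_E$; thus it normalizes $\D_E$ in the partial-isometry sense, and should admit a decomposition $\alpha(S_e) = d_e w_e$ with $d_e$ a partial isometry in $\D_E$ and $w_e$ a ``permutative'' partial isometry of the form $\sum_i S_{\mu_i} S_{\nu_i}^*$ with pairwise-orthogonal support projections. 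By composing $\alpha$ with a suitably chosen $\lambda_u \in \auto_{\D_E}$ one absorbs the diagonal pieces $d_e$, so that $[\alpha]$ is determined by the finite tuple $(w_e)_{e \in E^1}$ taking values in a countable set, yielding countability of $\mathfrak{W}_E$.

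The main obstacle I anticipate is the decomposition step: (\ref{bignormalizer}) is stated for \emph{unitaries} in the normalizer, and its extension to the partial isometries $\alpha(S_e)$ — which live in general corners $\alpha(P_e)\, C^*(E)\, \alpha(P_{r(e)})$ rather than in the full algebra — needs independent justification, for example by cornerization and an application of the unitary normalizer decomposition on the corner subalgebras, or by unitizing via $\alpha(S_e) + (1 - \alpha(P_e)) v$ for a suitable auxiliary unitary $v$. An alternative, topological route would be to note that $\auto(C^*(E))$ is a Polish group in the pointwise-norm topology (since $C^*(E)$ is separable for finite $E$) and to argue that $\auto_{\D_E}(C^*(E))$ is \emph{open} in $\auto(C^*(E),\D_E)$, which would make $\mathfrak{W}_E$ a discrete Polish group and hence countable; here one can exploit the discreteness of projections in the abelian algebra $\D_E$ — namely $\|p-q\|\in\{0,1\}$ for projections $p,q\in\D_E$ — to force sufficiently small perturbations of $\alpha\in\auto_{\D_E}$ to fix the finitely many vertex projections exactly, although propagating this rigidity uniformly from the $P_v$ to all $P_\mu$ is the delicate step of that alternative approach.
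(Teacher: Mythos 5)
Your second, ``topological'' route is essentially the paper's proof, but the step you defer as ``the delicate step'' is precisely the content of the argument, so as written there is a genuine gap. The paper closes it with a short induction: choosing coset representatives and mapping $\alpha\mapsto\oplus_e\alpha(S_e)\oplus_v\alpha(P_v)$ into the separable space $\oplus^{|E^1|+|E^0|}C^*(E)$, one shows the image is discrete by proving that $\|\alpha(x)-x\|<1/2$ on all generators forces $\alpha|_{\D_E}=\id$. The base case $\alpha(P_v)=P_v$ is the projection rigidity you mention; the inductive step is the estimate
\[
\|\alpha(P_{(e,\mu)})-P_{(e,\mu)}\|\le\|\alpha(S_e)P_\mu\alpha(S_e^*)-\alpha(S_e)P_\mu S_e^*\|+\|\alpha(S_e)P_\mu S_e^*-S_eP_\mu S_e^*\|<\tfrac12+\tfrac12=1,
\]
using $\alpha(P_\mu)=P_\mu$ from the previous step, after which commutativity of $\D_E$ (which contains both $\alpha(P_{(e,\mu)})$ and $P_{(e,\mu)}$) gives equality. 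Then $\|\alpha(x)-\beta(x)\|<1/2$ on generators implies $\beta^{-1}\alpha\in\auto_{\D_E}(C^*(E))$, so distinct cosets are uniformly separated and the image is a discrete subset of a separable space, hence countable. There is no need to invoke Polish group structure or openness; this metric separation already suffices.

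Your primary route, via the decomposition $\alpha(S_e)=d_ew_e$ and the semidirect product (\ref{bignormalizer}), has a more basic problem: the proposition is stated for an arbitrary finite graph, while (\ref{bignormalizer}) and the normalizer decomposition of \cite[Theorem 10.1]{HPP} are only available when $E$ has no sinks and every loop has an exit, so that $\D_E$ is a MASA. For a general finite graph $\D_E$ can be very small (a single loop with no exit gives $\D_E=\bC 1$), and the decomposition of $\alpha(S_e)$ into a diagonal part times a sum of words $S_\mu S_\nu^*$ simply is not available. Even under those extra hypotheses, the ``absorption'' of the $d_e$ by an element of $\auto_{\D_E}(C^*(E))$ is asserted rather than proved: one must check that equality of the tuples $(w_e)_e$ forces $\alpha|_{\D_E}=\beta|_{\D_E}$, which again requires an inductive computation of $\alpha(P_\mu)$ of roughly the same difficulty as the estimate above. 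So this route, if completed, would prove a weaker statement by a longer argument; the elementary metric argument is both shorter and fully general.
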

\begin{proof}
For each coset in the quotient $\auto(C^*(E),\D_E)/\auto_{\D_E}(C^*(E))$  
choose a representative $\alpha$ and define a mapping from $\auto(C^*(E),\D_E)/\auto_{\D_E}(C^*(E))$ 
to $\oplus^{|E^1|+|E^0|}C^*(E)$ by $\alpha\mapsto\oplus_e\alpha(S_e)\oplus_v\alpha(P_v)$. 
This mapping is one-to-one and the target space is separable. Thus it suffices to show that its image 
is a discrete subset of $\oplus^{|E^1|+|E^0|}C^*(E)$. 

Let $\alpha\in\auto(C^*(E),\D_E)$ be such that $||\alpha(x)-x||<1/2$ for all $x\in\{S_e:e\in E^1\}
\cup\{P_v:v\in E^0\}$. We claim that $\alpha|_{\D_E}=\id$. To this end, we show by induction 
on $|\mu|$ that $\alpha(P_\mu)=P_\mu$ for each path $\mu$. Indeed, if $v\in E^0$ then 
$||\alpha(P_v)-P_v||<1/2$ and thus $\alpha(P_v)=P_v$ since $\alpha(P_v)\in\D_E$ and $\D_E$ is 
commutative. This establishes the base for induction. Now suppose that $\alpha(P_\mu)=\P_\mu$ for 
all paths $\mu$ of length $k$. Let $(e,\mu)$ be a path of length $k+1$. Then, by the 
inductive hypothesis, we have 
$$ ||\alpha(P_{(e,\mu)})-P_{(e,\mu)}||=||\alpha(S_e)P_\mu\alpha(S_e^*)-S_eP_\mu S_e^*||$$
$$ \leq ||\alpha(S_e)P_\mu\alpha(S_e^*)-\alpha(S_e)P_\mu S_e^*||+
   ||\alpha(S_e)P_\mu S_e^*-S_e P_\mu S_e^*||<\frac{1}{2}+\frac{1}{2}=1. $$
Thus $\alpha(P_{(e,\mu)})=P_{(e,\mu)}$, since $\alpha(P_{(e,\mu)})$ and $P_{(e,\mu)}$ commute. 
This yields the inductive step. 

Now suppose that $\alpha,\beta\in\auto(C^*(E),\D_E)$ are such that $||\alpha(x)-\beta(x)||<1/2$ for all 
$x\in\{S_e:e\in E^1\}\cup\{P_v:v\in E^0\}$. Then also $||\beta^{-1}\alpha(x)-x||<1/2$ for 
all such $x$, and hence $\beta^{-1}\alpha\in\auto_{\D_E}(C^*(E))$ by the preceding paragraph. 
This completes the proof. 
\end{proof}

For the remainder of this section, we assume that $E$ is a finite graph without sinks in which every 
loop has an exit. Clearly, each automorphism $\alpha$ of the graph $E$ gives rise to an automorphism 
of the algebra $C^*(E)$, still denoted $\alpha$, such that $\alpha(S_e)=S_{\alpha(e)}$, $e\in E^1$, 
and $\alpha(P_v)=P_{\alpha(v)}$, $v\in E^0$. With a slight abuse of notation, we are identifying 
automorphisms of graph $E$ with the corresponding automorphisms of $C^*$-algebra $C^*(E)$, 
and denote this group $\Gamma_E$. 

We denote by $\gg$ the subgroup of $\auto(C^*(E))$ generated by 
automorphisms of the graph $E$ and $\lambda(\SS_E\cap\U_E)^{-1}$. That is, 
\begin{equation}\label{gggroup}
\gg := \langle \lambda(\SS_E\cap\U_E)^{-1} \cup \Gamma_E \rangle \subseteq \auto(C^*(E)). 
\end{equation}
If $u\in\SS_E\cap\U_E$ then $\lambda_u(\D_E)\subseteq\D_E$. Consequently, if $u\in\SS_E
\cap\U_E$ and $\lambda_u$ is invertible then $\lambda_u$ belongs to $\auto(C^*(E),\D_E)$, since 
$\D_E$ is a MASA in $C^*(E)$. Since each graph automorphism gives rise to an element of 
$\auto(C^*(E),\D_E)$ as well, we have $\gg\subseteq\auto(C^*(E),\D_E)$. We would like to stress that 
the class of automorphisms comprising $\gg$ can be viewed as `purely combinatorial' transformations, which 
facilitates their algorithmic analysis. 

Now, let $u\in\SS_E\cap\U_E$ be such that $\lambda_u$ is invertible. Then, as noted in Section 2 above, 
$\lambda_u^{-1}=\lambda_w$ for some $w\in\U_E$ and $w\in\N_{C^*(E)}(\D_E)$ by 
Proposition \ref{normalDautos}. Let $w=dz$ with $d\in\U(\D_E)$ and $z\in\SS_E$. Then 
we have $z\in\SS_E\cap\U_E$. Since $\lambda_{u*dz}=\lambda_u \lambda_{dz} = \id = \lambda_1$, 
we have $\lambda_u(d)\lambda_u(z)u=u*dz=1$. Thus $d=1$, by (\ref{bignormalizer}), 
and $\lambda_u^{-1}=\lambda_z$. This shows that $\lambda(\SS_E\cap\U_E)^{-1}$ is a group. 
If $\alpha$ is an automorphism of $E$ and $u\in\SS_E\cap\U_E$ then $\alpha \lambda_u \alpha^{-1}=
\lambda_w$ for some $w\in\U_E$, since $\alpha \lambda_u \alpha^{-1}$ fixes all the vertex projections. 
A short calculation shows that this $w$ belongs to $\SS_E$. It follows that 
$\lambda(\SS_E\cap\U_E)^{-1}$ is a normal subgroup of $\gg$. Let $\Gamma_E^0$ 
be the normal subgroup of $\Gamma_E$ consisting of those automorphisms which fix 
all vertex projections.  It is easy to verify that $\Gamma_E^0=\lambda(\P_E^1\cap\U_E)$, where 
we denote $\P_E^1=\F_E^1\cap\SS_E$. We have 
\begin{equation}\label{ggstructure}
\gg = \lambda(\SS_E\cap\U_E)^{-1} \Gamma_E \;\;\; \text{and} \;\;\; 
\lambda(\SS_E\cap\U_E)^{-1} \cap \Gamma_E = \Gamma_E^0. 
\end{equation}

\begin{proposition}\label{weylembedding} 
Let $E$ be a finite graph without sinks in which every loop has an exit. Then 
there is a natural embedding of $\gg$ into the Weyl group ${\mathfrak W}_E$ of $C^*(E)$. 
\end{proposition}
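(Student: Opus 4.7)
The plan is to show that the restriction of the quotient map $\auto(C^*(E),\D_E) \to {\mathfrak W}_E$ to the subgroup $\gg$ is injective. That such a restriction is well-defined is immediate since, as was observed right after the definition of $\gg$, we have $\gg \subseteq \auto(C^*(E),\D_E)$. Thus the task reduces to verifying
\[
\gg \cap \auto_{\D_E}(C^*(E)) = \{\id\}.
\]

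Using the description in (\ref{ggstructure}), an arbitrary element of $\gg$ has the form $\lambda_u \alpha$, with $u \in \SS_E\cap\U_E$ such that $\lambda_u$ is invertible, and with $\alpha \in \Gamma_E$. By Proposition \ref{diagonalunitaries}, any element of $\auto_{\D_E}(C^*(E))$ is of the form $\lambda_d$ for a unique $d \in \U(\D_E)$. So I would start from the hypothetical identity $\lambda_u \alpha = \lambda_d$ and aim to force $d=1$ and $\alpha = \lambda_u^{-1}$, which then yields $\lambda_u\alpha = \id$.

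The key manipulation is as follows. From the discussion preceding (\ref{ggstructure}), $\lambda_u^{-1} = \lambda_z$ for some $z \in \SS_E \cap \U_E$. So the equation becomes $\alpha = \lambda_z \lambda_d = \lambda_{z * d}$, where $z*d = \lambda_z(d)z$. Since the right-hand side fixes every vertex projection, the graph automorphism $\alpha$ must lie in the vertex-fixing subgroup $\Gamma_E^0 = \lambda(\P_E^1 \cap \U_E)$. Write $\alpha = \lambda_p$ with $p \in \P_E^1 \cap \U_E \subseteq \SS_E$. Using the bijectivity of the correspondence $w \mapsto \lambda_w$ on $\U_E$ (valid because $E$ is a finite graph without sinks, as recalled in Section \ref{sectionNP}), I conclude that $p = \lambda_z(d)\, z$ in $\U_E$.

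Now I would invoke the semi-direct product decomposition (\ref{bignormalizer}), namely $\N_{C^*(E)}(\D_E) = \U(\D_E) \rtimes \SS_E$, which applies under the standing hypothesis that $E$ is a finite graph without sinks in which every loop has an exit. Since $\lambda_z$ is an automorphism preserving $\D_E$ (it lies in $\gg \subseteq \auto(C^*(E),\D_E)$), $\lambda_z(d) \in \U(\D_E)$. Hence $p = \lambda_z(d)\cdot z$ exhibits $p$ as a product in $\U(\D_E) \cdot \SS_E$, while $p = 1 \cdot p$ is another such product. Uniqueness of the factorization forces $\lambda_z(d) = 1$ and $p = z$. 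Injectivity of $\lambda_z$ then gives $d = 1$, so $\lambda_d = \id$, as desired.

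The main obstacle is bookkeeping rather than a deep difficulty: one must carefully apply the bijective correspondence $u \leftrightarrow \lambda_u$ only on $\U_E$ and not conflate it with the correspondence on all of $M(C^*(E))$, and one must verify that $\lambda_z$ indeed sends $\U(\D_E)$ into itself before applying the uniqueness of the semi-direct product decomposition. Both verifications follow from $z \in \SS_E \cap \U_E$ together with Proposition \ref{normalDautos} and the fact that $\D_E$ is a MASA under our standing hypotheses.
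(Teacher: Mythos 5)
Your proof is correct and follows essentially the same route as the paper's: both reduce the claim to showing $\gg \cap \auto_{\D_E}(C^*(E)) = \{\id\}$, write the element as $\lambda_u\alpha$, deduce $\alpha\in\Gamma_E^0$, and then combine the injectivity of $w \mapsto \lambda_w$ on $\U_E$ with the semi-direct product decomposition (\ref{bignormalizer}). The only cosmetic difference is that the paper computes $\lambda_u\lambda_w = \lambda_{u*w}$ directly and concludes via $\SS_E \cap \U(\D_E) = \{1\}$, whereas you first invert $\lambda_u$ and appeal to uniqueness of the factorization in $\U(\D_E)\rtimes\SS_E$ --- two faces of the same fact.
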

\begin{proof}
Since $\gg\subseteq \auto(C^*(E),\D_E)$, it suffices to show that 
$\gg \cap \auto_{\D_E}(C^*(E)) = \{\id\}$. Indeed, if 
$\beta\in \gg $ then $\beta=\lambda_u \alpha$ for some 
$u\in\SS_E\cap\U_E$ and $\alpha\in\auto(E)$. If, in addition, $\beta\in\auto_{\D_E}(C^*(E))$ then $\alpha\in
\Gamma_E^0$ and thus $\alpha=\lambda_w$ for some $w\in\P_E^1\cap\U_E$. Therefore 
$\beta=\lambda_u\lambda_w=\lambda_{u*w}$ and $u*w\in\SS_E$. By Proposition \ref{diagonalunitaries} 
we also have $\beta=\lambda_d$ for some $d\in\U(\D_E)$. Consequently $u*w=d=1$, since 
$\SS_E\cap\U(\D_E)=\{1\}$, and thus $\beta=\id$. 
\end{proof}

One of the more difficult issues arising in dealing with automorphisms of graph $C^*$-algebras is 
deciding if the automorphism at hand is outer or inner. The following theorem provides a criterion 
useful for certain automorphisms belonging to the subgroup $\gg$ of the Weyl group 
${\mathfrak W}_E$, see Corollary \ref{outerness} below. 

\begin{theorem}\label{fullinner}
Let $E$ be a finite graph without sinks in which every loop has an exit. Then 
$$ \gg\cap\inn(C^*(E))\subseteq\{\Ad(w):w\in\SS_E\}. $$
\end{theorem}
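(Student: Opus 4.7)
The plan is to take an inner automorphism $\beta = \Ad(v)$ that lies in $\gg$ and show that the implementing unitary can be chosen inside $\SS_E$. The strategy is to split off the $\U(\D_E)$-part of $v$ via the normalizer decomposition (\ref{bignormalizer}) and then eliminate that part using Proposition \ref{weylembedding}.

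First, since $\beta \in \gg \subseteq \auto(C^*(E),\D_E)$, the unitary $v$ normalizes the MASA $\D_E$. The factorization (\ref{bignormalizer}) therefore produces $v = dw$ with $d \in \U(\D_E)$ and $w \in \SS_E$, so that $\beta = \Ad(d)\,\Ad(w)$. It then suffices to prove $\Ad(w) \in \gg$: once this is established, $\Ad(d) = \beta\,\Ad(w)^{-1}$ also lies in $\gg$, while $\Ad(d)$ visibly fixes $\D_E$ pointwise, so Proposition \ref{weylembedding} (which gives $\gg \cap \auto_{\D_E}(C^*(E)) = \{\id\}$) forces $\Ad(d) = \id$, yielding $\beta = \Ad(w)$ with $w \in \SS_E$ as desired.

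To reach $\Ad(w) \in \gg$, I would invoke (\ref{ggstructure}) to write $\beta = \lambda_u \alpha$ with $u \in \SS_E \cap \U_E$ and $\alpha \in \Gamma_E$. Comparing the two expressions $\beta(P_v) = \lambda_u(P_{\alpha(v)}) = P_{\alpha(v)}$ on one side, and $\beta(P_v) = \Ad(d)(wP_vw^*) = wP_vw^*$ on the other (using that $w \in \SS_E$ normalizes $\D_E$ while $\Ad(d)$ fixes $\D_E$ pointwise), one obtains $wP_vw^* = P_{\alpha(v)}$ for every $v \in E^0$. Hence $\alpha^{-1} \circ \Ad(w)$ fixes all vertex projections, so by the bijective correspondence recalled in Section \ref{sectionNP} it equals $\lambda_{u_0}$ with
\begin{equation*}
u_0 \;=\; \sum_{e \in E^1} \alpha^{-1}(wS_ew^*)\, S_e^* \;\in\; \U_E .
\end{equation*}

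The main obstacle is to confirm that $u_0$ actually sits in $\SS_E$ rather than only in $\U_E$. Here I would expand $w = \sum_i S_{\mu_i} S_{\nu_i}^*$ as a finite sum, and note that each product $S_{\nu_i}^* S_e$ collapses to a single term of the form $S_\rho$, $S_\rho^*$, a vertex projection, or zero; consequently $wS_ew^*$, and then its image under the graph automorphism $\alpha^{-1}$, lie in the algebraic $*$-span of $\{S_\mu S_\nu^* : \mu,\nu \in E^*\}$. Thus $u_0$ is a unitary belonging to this algebraic span, which places it in $\SS_E$. Therefore $\Ad(w) = \alpha\,\lambda_{u_0} \in \Gamma_E \cdot \lambda(\SS_E \cap \U_E)^{-1} = \gg$, closing the loop and completing the proof.
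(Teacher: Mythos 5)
Your argument is correct and reaches the theorem, but by a route that diverges from the paper's after the first step. Both proofs begin the same way: since the inner automorphism preserves $\D_E$, its implementing unitary lies in $\N_{C^*(E)}(\D_E)$, and (\ref{bignormalizer}) gives $\beta=\Ad(d)\Ad(w)$ with $d\in\U(\D_E)$ and $w\in\SS_E$. The paper then kills $\Ad(d)$ directly: writing $\beta=\lambda_u\alpha$, it forms $\rho:=\lambda_u\alpha\Ad(w^*)=\Ad(d)=\lambda_{d\varphi(d^*)}$ (using (\ref{inner})), notes that the associated unitary $d\varphi(d^*)=\sum_{e}\rho(S_e)S_e^*$ lies in $\SS_E\cap\U(\D_E)=\{1\}$, and deduces from $d=\varphi(d)$ that $d$ is central. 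You instead prove $\Ad(w)\in\gg$ and then invoke $\gg\cap\auto_{\D_E}(C^*(E))=\{\id\}$ from Proposition \ref{weylembedding} to force $\Ad(d)=\id$; this spares you the ``$d=\varphi(d)$ implies $d$ central'' computation, but the price is verifying that $u_0=\sum_{e}\alpha^{-1}(wS_ew^*)S_e^*$ lies in $\SS_E$, which is exactly parallel to the paper's check that $\sum_e\rho(S_e)S_e^*\in\SS_E$, so neither route is really shorter. One step needs tightening: the inference ``$u_0$ is a unitary in the algebraic span of the words $S_\mu S_\nu^*$, hence $u_0\in\SS_E$'' is false as stated --- for instance $\sum_{e\in E^1}z_eS_eS_e^*$ with $|z_e|=1$ is a unitary in that span but belongs to $\SS_E$ only when all $z_e=1$. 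What membership in $\SS_E$ actually requires, and what your collapse computation delivers once you observe that the summands of $w$ have mutually orthogonal range (and domain) projections so that distinct index pairs in the expansion of $wS_ew^*$ yield distinct words, is that $u_0$ is a finite sum of words each with coefficient $1$; say that explicitly and the step is airtight.
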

\begin{proof}
By (\ref{ggstructure}), each element of $\gg$ is of the form $\lambda_u \alpha$, with $u\in\SS_E\cap\U_E$ 
and $\alpha\in\Gamma_E$. If such an automorphism is inner and equals $\Ad(y)$ for some 
$y\in\U(C^*(E))$ then $y\in\N_{C^*(E)}(\D_E)$, 
and thus we have $\lambda_u \alpha = \Ad(d) \Ad(w)$ for some $d\in\U(\D_E)$ and $w\in\SS_E$,  
by (\ref{bignormalizer}). Hence $\rho:=\lambda_u \alpha \Ad(w^*)=\lambda_{d\varphi(d^*)}$. But 
then $d\varphi(d^*)=\sum_{e\in E^1}\rho(S_e)S_e^*\in\SS_E$.  Since $\SS_E\cap\U(\D_E)=\{1\}$, we 
have $d\varphi(d^*)=1$. Therefore $d=\varphi(d)$ and this implies (via a straightforward calculation) that 
$d$ belongs to the center of $C^*(E)$. Hence $\Ad(u)=\id$ and thus $\lambda_u \alpha= \Ad(w)$, 
as required. 
\end{proof}


\section{The restricted Weyl group}\label{sectionR}

In order to define and study the restricted Weyl group of a graph $C^*$-algebra, 
we need some preparation on endomorphisms which globally preserve its 
core AF-subalgebra. Recall that $\gamma:U(1)\to\auto(C^*(E))$ is the gauge action, such that $\gamma_z(S_e)=zS_e$ for all $e\in E^1$ and $z\in U(1)$. Then $\F_E$ is the fixed-point 
algebra for action $\gamma$. 

\medskip
The same argument as in Proposition \ref{normalDautos} yields the following. 

\begin{proposition}\label{normalFautos}
Let $E$ be a finite graph without sinks, and let $u\in\U_E$. Then the following hold: 
\begin{description}
\item{(1)} If $u\F_E u^*\subseteq \F_E$ then $\lambda_u(\F_E)\subseteq\F_E$. 
\item{(2)} If $\lambda_u(\F_E) = \F_E$ then $u\F_Eu^*\subseteq\F_E$. 
\end{description} 
\end{proposition}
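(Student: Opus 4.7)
The plan is to mirror the proof of Proposition \ref{normalDautos} with $\F_E$ replacing $\D_E$, ultimately invoking the structural fact that $\F_E$ coincides with the $C^*$-algebra generated by $\F_E^1\cup\varphi(\F_E)$, the natural analog of the generating property used for $\D_E$.

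For part (1), I will proceed by induction on the common length $k=|\mu|=|\nu|$ of the matrix units $S_\mu S_\nu^*\in\F_E^k$. The base cases $k=0$ (where $\lambda_u(P_v)=P_v\in\F_E$) and $k=1$ (where $\lambda_u(S_eS_f^*)=uS_eS_f^*u^*\in u\F_E^1u^*\subseteq\F_E$) are immediate from the hypothesis. For the inductive step, factor $S_\mu S_\nu^*=S_{\mu_1}(S_{\mu'}S_{\nu'}^*)S_{\nu_1}^*$ with $|\mu'|=|\nu'|=k$ and apply $\lambda_u$:
$$\lambda_u(S_\mu S_\nu^*)=uS_{\mu_1}\,\lambda_u(S_{\mu'}S_{\nu'}^*)\,S_{\nu_1}^*u^*.$$
By the inductive hypothesis $\lambda_u(S_{\mu'}S_{\nu'}^*)\in\F_E$; the compression $S_{\mu_1}(\cdot)S_{\nu_1}^*$ then sends this element into $\F_E^{k+1}\subseteq\F_E$, since each matrix unit $S_\alpha S_\beta^*$ in its expansion becomes $S_{\mu_1\alpha}S_{\nu_1\beta}^*$; and the outer conjugation by $u$ preserves $\F_E$ by the standing hypothesis.

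For part (2), I will run the two-step argument of Proposition \ref{normalDautos}(2). First, the restriction $\lambda_u|_{\F_E^1}=\Ad(u)|_{\F_E^1}$ together with surjectivity gives $u\F_E^1u^*=\lambda_u(\F_E^1)\subseteq\lambda_u(\F_E)=\F_E$. Second, the intertwining $\lambda_u\circ\varphi=\Ad(u)\circ\varphi\circ\lambda_u$, immediate from $S_eu=\varphi(u)S_e$, combined with $\varphi(\F_E)\subseteq\F_E$ and the surjectivity of $\lambda_u$ on $\F_E$, yields: for any $y\in\F_E$ with $x\in\F_E$ satisfying $\lambda_u(x)=y$,
$$u\varphi(y)u^*=u\varphi(\lambda_u(x))u^*=\lambda_u(\varphi(x))\in\lambda_u(\F_E)=\F_E,$$
so that $u\varphi(\F_E)u^*\subseteq\F_E$. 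Consequently $\Ad(u)$ carries the $C^*$-subalgebra $C^*(\F_E^1\cup\varphi(\F_E))$ into $\F_E$, and the conclusion $u\F_Eu^*\subseteq\F_E$ follows once this subalgebra is identified with $\F_E$.

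The main obstacle is precisely this last identification. The corresponding fact for $\D_E$ reduces to the one-line identity $P_{e\mu}=P_e\cdot\varphi(P_\mu)$. The $\F_E$-analog starts from the matrix-unit identity $S_\mu S_\nu^*=(S_{\mu_1}S_{\nu_1}^*)\cdot\varphi(S_{\mu'}S_{\nu'}^*)$, which covers the case $r(\mu_1)=r(\nu_1)$, and requires a supplementary manipulation to reach those matrix units with $r(\mu_1)\neq r(\nu_1)$. This step is noticeably more delicate than its $\D_E$ counterpart owing to the richer non-abelian block structure of the finite-dimensional algebras $\F_E^k$.
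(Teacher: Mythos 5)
Your part (1) is correct, and is the intended argument. The problem is in part (2): the identification $\F_E=C^*(\F_E^1\cup\varphi(\F_E))$, which you rightly isolate as the crux and leave open, is in fact \emph{false} for general finite graphs without sinks, so no ``supplementary manipulation'' will close the argument along this route. Indeed, every generator you have available lies in the relative commutant of $\varphi(\D_E^0)$ inside $\F_E$, where $\varphi(P_v)=\sum_{r(e)=v}S_eS_e^*$: one checks directly that $S_eS_f^*$ (nonzero only when $r(e)=r(f)$) commutes with each $\varphi(P_v)$, and $\varphi(C^*(E))\subseteq\varphi(\D_E^0)'$ trivially. Hence $C^*(\F_E^1\cup\varphi(\F_E))\subseteq\varphi(\D_E^0)'\cap\F_E$, and the latter is a proper subalgebra omitting every matrix unit $S_\mu S_\nu^*$ with $r(\mu_1)\neq r(\nu_1)$; such matrix units are nonzero as soon as some vertex receives two equal-length paths whose initial edges have different ranges. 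For the Fibonacci graph of Example \ref{example1}, $S_{e_1e_1}S_{e_2e_3}^*$ lies in $\F_E^2$ but not in $C^*(\F_E^1\cup\varphi(\F_E))$. (The identification does hold for single-vertex graphs, which is why the argument works verbatim for $\O_n$; the paper's own one-line proof, ``the same argument as Proposition \ref{normalDautos}'', silently glosses over exactly this point, since for $\D_E$ only the pairs $\mu=\nu$ occur and the obstruction is invisible.)

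The good news is that your own computation already contains the fix: you must use the surjectivity of $\lambda_u$ on $\F_E$ at every level, not just through $\F_E^1$ and $\varphi(\F_E)$. Apply the intertwining trick to the individual compressions $x\mapsto S_exS_f^*$ with $e\neq f$ allowed, rather than only to their diagonal sum $\varphi$. Given $\mu=\mu_1\mu'$ and $\nu=\nu_1\nu'$ with $|\mu|=|\nu|\geq 2$, choose $y\in\F_E$ with $\lambda_u(y)=S_{\mu'}S_{\nu'}^*$ (possible since $\lambda_u(\F_E)=\F_E$); then
$$ uS_\mu S_\nu^*u^*=uS_{\mu_1}\lambda_u(y)S_{\nu_1}^*u^*=\lambda_u(S_{\mu_1})\,\lambda_u(y)\,\lambda_u(S_{\nu_1})^*=\lambda_u\bigl(S_{\mu_1}yS_{\nu_1}^*\bigr)\in\lambda_u(\F_E)=\F_E, $$
because $S_{\mu_1}yS_{\nu_1}^*\in\F_E$ for any $y\in\F_E$ (the same observation you used in part (1)). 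Together with the trivial cases $|\mu|=|\nu|\leq 1$, this handles all matrix units at once, including those with $r(\mu_1)\neq r(\nu_1)$, and linearity and continuity finish the proof.
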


It turns out that in many instances the normalizer of the core AF-subalgebra $\F_E$ of $C^*(E)$ 
is trivial. The following theorem is essentially due to Mikael R{\o}rdam, \cite{Ror2}. 

\begin{theorem}\label{normalizer}
Let $E$ be a directed graph with finitely many vertices and no sources. Suppose 
further that the relative commutant of $\F_E$ in $C^*(E)$ is trivial. Then  
$u\in\U(C^*(E))$ and $u\F_E u^*\subseteq \F_E$ imply that $u\in\U(\F_E)$. 
\end{theorem}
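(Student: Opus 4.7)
My plan is to show that $u$ lies in the fixed point algebra of the gauge action $\gamma:U(1)\to\auto(C^*(E))$, which is $\F_E$, by first placing $u$ inside a single spectral subspace $C^*(E)^{(m)}$ and then forcing $m=0$.

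For the first step, fix $z\in U(1)$. For any $a\in\F_E$ the element $uau^*\in\F_E$ is gauge-invariant, so $\gamma_z(u)\,a\,\gamma_z(u)^* = uau^*$, which upon conjugating by $u^*$ shows that $u^*\gamma_z(u)$ commutes with $a$. Since $a\in\F_E$ was arbitrary, $u^*\gamma_z(u)\in\F_E'\cap C^*(E)=\bC\cdot 1$ by the trivial relative commutant hypothesis. Write $u^*\gamma_z(u)=\chi(z)\cdot 1$. Strong continuity of $\gamma$ makes $\chi$ continuous, and the calculation $\chi(zw)u = \gamma_{zw}(u) = \gamma_z(\chi(w)u) = \chi(w)\chi(z)u$ shows $\chi$ is multiplicative, so $\chi$ is a continuous character of $U(1)$ and hence $\chi(z)=z^m$ for some $m\in\bZ$. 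Thus $\gamma_z(u) = z^m u$, i.e., $u \in C^*(E)^{(m)}$.

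For the second step, suppose $m\neq 0$ toward contradiction, and replace $u$ by $u^*$ if necessary to arrange $m\geq 1$. I first rule out sinks: any basic element $S_\alpha S_\beta^*\in C^*(E)^{(m)}$ has $|\alpha|\geq m\geq 1$, so $s(\alpha)$ must emit the edge $\alpha_1$ and hence is not a sink; consequently $P_v u = 0$ for every sink $v$, and combined with $uu^*=1$ this yields $P_v = P_v uu^* P_v = 0$, contradicting the nontriviality of vertex projections. With $E$ sink-free the identity decomposes as $1=\sum_{|\mu|=m}S_\mu S_\mu^*$, so defining $b_\mu := S_\mu^* u$ we get $b_\mu\in C^*(E)^{(-m)}\cdot C^*(E)^{(m)} \subseteq \F_E$ and $u = \sum_{|\mu|=m} S_\mu b_\mu$. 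Unitarity translates into
\[ b_\mu b_\nu^* = \delta_{\mu,\nu}\,P_{r(\mu)}, \qquad \sum_{|\mu|=m} b_\mu^* b_\mu = 1, \]
exhibiting $\{b_\mu^* b_\mu\}_{|\mu|=m}$ as a family of mutually orthogonal projections in $\F_E$ summing to $1$, each Murray--von Neumann equivalent in $\F_E$ to $P_{r(\mu)}$.

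Passing to $K_0(\F_E)$ one obtains the identity $\sum_v N(v,m)[P_v] = \sum_v [P_v]$, where $N(v,m)$ denotes the number of paths of length $m$ ending at $v$. The no-sources hypothesis guarantees $N(v,m)\geq 1$, so each coefficient $N(v,m)-1$ is non-negative; meanwhile, since $\F_E$ is AF and hence has cancellation, every nonzero vertex projection gives a strictly positive class $[P_v]$ in the dimension group $K_0(\F_E)$. In an ordered abelian group a non-negative integer combination of strictly positive elements vanishes only when every coefficient is zero, so $N(v,m)=1$ for every $v$. Combined with the no-sinks conclusion, this forces every vertex to have in-degree and out-degree exactly one, so $E$ must be a disjoint union of simple cycles. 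In that case $C^*(E)$ is a direct sum of matrix algebras $M_{n_i}(C(U(1)))$ with $\F_E$ finite-dimensional abelian, and $\F_E'\cap C^*(E)$ visibly contains a copy of $C(U(1))$, contradicting the triviality hypothesis. Hence $m=0$, so $u\in C^*(E)^{(0)}=\F_E$ and is a unitary there. The hard step is this $K_0$-positivity argument, which depends essentially on the no-sources hypothesis (to secure $N(v,m)\geq 1$) and on cancellation in $\F_E$ (to keep each $[P_v]$ strictly positive).
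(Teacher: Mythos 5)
Your first step---showing that $u^*\gamma_z(u)$ lies in $\F_E'\cap C^*(E)=\bC 1$ and hence that $u$ sits in a single spectral subspace $C^*(E)^{(m)}$---is exactly the paper's opening move. Where you diverge is in ruling out $m\neq 0$. The paper chooses one edge $e_v$ with range $v$ for each vertex (possible by the no-sources hypothesis), forms the degree-one isometry $T=\sum_v S_{e_v}$, observes that $TT^*\neq 1$ (otherwise every vertex emits exactly one edge, the graph is a disjoint union of cycles, and $\F_E'\cap C^*(E)$ would be nontrivial), and then notes that $T^m u^*$ is a gauge-invariant isometry, hence an isometry in the AF algebra $\F_E$, hence a unitary---which would force $T$ itself to be unitary, a contradiction. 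You instead decompose $u=\sum_{|\mu|=m}S_\mu b_\mu$ and run a $K_0$-cancellation argument. Both proofs ultimately rest on (stable) finiteness of $\F_E$---yours via cancellation in the dimension group, the paper's via ``isometries in AF algebras are unitaries''---and both funnel the degenerate case into the same ``disjoint union of cycles'' contradiction; yours extracts more combinatorial information along the way (the count $N(v,m)=1$ and the structure of $E$), at the cost of a longer argument. Your preliminary elimination of sinks via $P_vu=0$ is correct, and is a step the paper's proof does not need.

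There is one point where your route is strictly narrower than the statement. The hypothesis is only that $E$ has finitely many vertices; infinite emitters are permitted. Your identity $1=\sum_{|\mu|=m}S_\mu S_\mu^*$ requires every vertex to emit finitely many (and at least one) edges, since relation (GA3) is imposed only at such vertices; at an infinite emitter $v$ the projection $P_v$ strictly dominates every finite sum $\sum S_eS_e^*$, the sum over $E^m$ is infinite, and the decomposition of $u$ breaks down. The paper's proof sidesteps this entirely by using only one edge per vertex in $T$. So your argument is complete for finite graphs (the setting of most of the paper), but as written it does not cover the non-row-finite graphs that the theorem's hypotheses allow.
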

\begin{proof}
If $x\in\F_E$ then $uxu^*\in\F_E$ and thus $\gamma_t(u)x\gamma_t(u)^*=
\gamma_t(uxu^*)=uxu^*$ for each $t\in U(1)$. Consequently, 
$u^*\gamma_t(u)$ belongs to $\F_E'\cap C^*(E)=\bC1$. Thus, for each $t\in U(1)$ there 
exists a $z(t)\in\bC$ such that $\gamma_t(u)=z(t)u$. Clearly, $t\mapsto z(t)$ is 
a continuous character of the circle group $U(1)$. Hence there is an integer $m$ such 
that $z(t)=t^m$. If $m=0$ then $u$ is invariant under the gauge action and we are done. 
Otherwise, by passing to $u^*$ if necessary, we may assume that $m>0$.  

For each vertex $v\in E^0$ choose one edge $e_v$ with range $v$ and let $T= \sum_{v\in E^0} S_{e_v}$. 
We have $\gamma_t(T)=tT$ for all $t\in U(1)$ and $T$ is an isometry, since $E$ has no sources. 
Furthermore, $TT^*\neq 1$, for otherwise each vertex would emit exactly one edge. As $E^0$ is finite 
and there are no sources, this would entail existence of a loop disjoint from the rest of the graph, and 
consequently $\F_E'\cap C^*(E)$ would contain the non-trivial center of $C^*(E)$, contrary to the assumptions.  
Now $T^m u^*$ is fixed by the gauge action and thus it is an isometry in $\F_E$. Since $\F_E$ is an 
AF-algebra, $T^m u^*$ must be unitary. But then $T^m$ and thus $T$ itself would 
be unitary, a contradiction. This completes the proof.  
\end{proof}

\begin{remark}\label{variouscenters}\rm
Several results of this paper depend on triviality  of the center $\Z(C^*(E))$ of the graph algebra $C^*(E)$, 
the center $\Z(\F_E)$ of the core AF-subalgebra $\F_E$, or the relative commutant $\F_E'\cap C^*(E)$, 
respectively. The most interesting case to us is when $E$ is  finite without sinks and where all loops have exits. 
Since in this case $\D_E$ is maximal abelian in $C^*(E)$, \cite{HPP}, it follows immediately that  
$$ \Z(C^*(E)) \subseteq \Z(\F_E) = \F_E'\cap C^*(E). $$ 
The first inclusion above may be strict, even if $C^*(E)$ is simple (see \cite{PRho} for examples and discussion). 
However, when $E$ is strongly connected (i.e. for every pair of vertices $v$, $w$ there is a path from $v$ 
to $w$) and has period 1 (i.e. the greatest common divisor of the lengths of all loops is 1) 
then $\F_E$ is simple and thus its center is trivial, \cite[Theorem 6.11]{PRho}. 
\end{remark}

We begin our analysis of automorphisms of $C^*(E)$ which globally preserve $\F_E$ by identifying 
those which fix $\F_E$ point-wise.  

\begin{proposition}\label{fixingf}
Let $E$ be a finite graph without sinks in which every loop has an exit. Suppose also that the 
center of $\F_E$ is trivial. Then 
$$ \endo_{\F_E}(C^*(E))=\auto_{\F_E}(C^*(E))=\{\gamma_z:z\in U(1)\}. $$
\end{proposition}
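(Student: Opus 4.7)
The plan is to combine Proposition~\ref{diagonalunitaries} with a peel-off argument using the shift $\varphi$ and the triviality of $\Z(\F_E)$. Since $\D_E \subseteq \F_E$, every $\rho \in \endo_{\F_E}(C^*(E))$ belongs to $\endo_{\D_E}(C^*(E)) = \auto_{\D_E}(C^*(E))$ by Proposition~\ref{diagonalunitaries}, so $\rho = \lambda_d$ for a unique $d \in \U(\D_E)$ and $\rho$ is automatically an automorphism. This yields the first equality $\endo_{\F_E}(C^*(E)) = \auto_{\F_E}(C^*(E))$, and the inclusion $\{\gamma_z : z \in U(1)\} \subseteq \auto_{\F_E}(C^*(E))$ is immediate since $\F_E = C^*(E)^\gamma$.

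For the reverse inclusion, fix $\rho = \lambda_d$ with $d \in \U(\D_E)$ and $\lambda_d|_{\F_E} = \id$; the goal is $d \in U(1)\cdot 1$. Applying formula~(\ref{uaction}) to any matrix unit $S_\mu S_\nu^* \in \F_E^k$ forces the unitary $d_k := d\,\varphi(d)\cdots\varphi^{k-1}(d) \in \U(\D_E)$ to commute with all of $\F_E^k$, for each $k \geq 0$. A matrix-unit bookkeeping at the finite level $\D_E^m$ (using $P_{\mu\beta} = P_\mu\,\varphi^k(P_\beta)$ together with the action of the generators $S_\nu S_\rho^*$) identifies
\begin{equation*}
\D_E \cap (\F_E^k)' \cap C^*(E) \;=\; \varphi^k(\D_E),
\end{equation*}
so that $d_k \in \varphi^k(\D_E)$ for every $k$.

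From here I plan to peel off factors of $\varphi$ by induction to obtain $d \in \varphi^k(\D_E)$ for all $k \geq 1$. The base case $k = 1$ is just $d = d_1 \in \varphi(\D_E)$. For the inductive step, write $d = \varphi^k(a)$ with $a \in \U(\D_E)$ and set $a_k = a\,\varphi(a) \cdots \varphi^{k-1}(a)$, so that $d_k = \varphi^k(a_k)$. The recursion $d_{k+1} = d \cdot \varphi(d_k) = \varphi^k\!\bigl(a \cdot \varphi(a_k)\bigr)$, combined with the injectivity of $\varphi^k$ on $\D_E$, shows that $a \cdot \varphi(a_k) \in \varphi(\D_E)$; since $\varphi(a_k)$ is a unitary of $\varphi(\D_E)$, I conclude $a \in \varphi(\D_E)$, i.e.\ $d \in \varphi^{k+1}(\D_E)$.

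Finally, Remark~\ref{variouscenters} (using the MASA property of $\D_E$) gives $\F_E' \cap C^*(E) = \Z(\F_E)$, and the same matrix-unit calculation yields $\varphi^k(\D_E) \subseteq \F_E \cap (\F_E^k)'$; hence
\begin{equation*}
\bigcap_{k\geq 0} \varphi^k(\D_E) \;\subseteq\; \bigcap_{k\geq 0} \F_E \cap (\F_E^k)' \;=\; \F_E' \cap \F_E \;=\; \Z(\F_E) \;=\; \bC\cdot 1,
\end{equation*}
by the standing triviality of $\Z(\F_E)$. Therefore $d = z \cdot 1$ for some $z \in U(1)$ and $\rho = \gamma_z$. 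The main obstacle I anticipate is the clean passage from the finite-level identification $\D_E^m \cap (\F_E^k)' = \varphi^k(\D_E^{m-k})$ to the full $C^*$-algebraic statement $\D_E \cap (\F_E^k)' = \varphi^k(\D_E)$—likely handled via the conditional expectation onto $(\F_E^k)' \cap C^*(E)$ obtained by averaging over the compact unitary group of the finite-dimensional $\F_E^k$; the rest of the argument is essentially formal.
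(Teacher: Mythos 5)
Your proof is correct and reaches the same destination as the paper's, but the middle of the argument is organized differently. The paper's proof is shorter: after reducing to $\alpha=\lambda_u$ with $u\in\U(\D_E)$ exactly as you do, it runs a direct induction showing that $u$ \emph{itself} commutes with every $\F_E^k$ (using that $u_k=u\varphi(u)\cdots\varphi^{k-1}(u)$ commutes with $\F_E^k$ and that $\varphi(u)$ commutes with $\F_E^k$ once $u$ commutes with $\F_E^{k-1}$), whence $u\in\F_E'\cap\D_E=\Z(\F_E)=\bC 1$. You instead identify $\D_E\cap(\F_E^k)'=\varphi^k(\D_E)$, place $d_k$ there, and peel off $\varphi$'s to land $d$ in $\bigcap_k\varphi^k(\D_E)\subseteq\Z(\F_E)$; your recursion $d_{k+1}=d\,\varphi(d_k)$ and the injectivity of $\varphi$ on $B_E$ make this work. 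What your route buys is the explicit structural fact $\D_E\cap(\F_E^k)'=\varphi^k(\D_E)$, which is of independent interest; what it costs is the extra closure argument you flag at the end. On that point, one caution: the conditional expectation $\Theta_k(x)=\int_{\U(\F_E^k)}wxw^*\,dw$ onto $(\F_E^k)'\cap C^*(E)$ does not obviously map $\D_E$ into $\D_E$, so approximating $y$ by elements of $\D_E^m$ and averaging does not immediately give approximants in $\D_E\cap(\F_E^k)'$. A cleaner way to close this gap is direct: for $y\in\D_E\cap(\F_E^k)'$ one has $y=\sum_{|\alpha|=k}S_\alpha y_\alpha S_\alpha^*$ with $y_\alpha=S_\alpha^*yS_\alpha\in\D_E$, and commutation with the partial isometries $S_\alpha S_{\alpha'}^*\in\F_E^k$ forces $y_\alpha=y_{\alpha'}$ whenever $r(\alpha)=r(\alpha')$, so $y=\varphi^k(x)$ for a suitable $x\in\D_E$ assembled from the $y_\alpha$. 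With that substitution your argument is complete.
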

\begin{proof}
Let $\alpha\in\endo_{\F_E}(C^*(E))$. Then $\alpha|_{\D_E}=\id$ and thus 
$\alpha=\lambda_u$ for some $u\in\U(\D_E)$, as noted in Section 2. Then an 
easy induction on $k$ shows that $u$ commutes with all $\F_E^k$, and thus $u\in\F_E'\cap\D_E$. 
But $\F_E'\cap\D_E$ is the center of $\F_E$, since $\D_E$ is a MASA in $\F_E$. Hence $u$ is 
a scalar and $\alpha=\lambda_u$ is a gauge automorphism. 
\end{proof}

In what follows, we will be mainly working with a finite graph $E$ without sinks and sources. 
For each vertex $v\in E^0$ we select exactly one edge $e_v$ with $r(e_v)=v$, and define 
\begin{equation}\label{T}
T=\sum_{v\in E^0}S_{e_v},  
\end{equation}
as in the proof of Theorem \ref{normalizer}. 
Then $T$ is an isometry such that $\gamma_z(T)=zT$ for all $z\in U(1)$. 

\begin{lemma}\label{fautos}
Let $E$ be a finite graph without sinks and sources in which every loop has an exit. Also, we assume 
that the center of $\F_E$ is trivial. For an automorphism $\alpha\in\auto(C^*(E))$ the following 
conditions are equivalent.
\begin{description}
\item{(1)} $\alpha\gamma_z=\gamma_z\alpha$ for each $z\in U(1)$;
\item{(2)} For each $e\in E^1$ there is a $w\in\F_E$ such that $\alpha(S_e)=wT$, where $T$ is 
an isometry as in (\ref{T});  
\item{(3)} $\alpha(\F_E)=\F_E$.
\end{description}
\end{lemma}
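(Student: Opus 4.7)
The plan is to prove the chain $(1)\Leftrightarrow(2)$ and $(1)\Leftrightarrow(3)$. The equivalences $(1)\Leftrightarrow(2)$ and $(1)\Rightarrow(3)$ will be quick consequences of the gauge grading $C^*(E)=\overline{\bigoplus_{k\in\bZ}C^*(E)^{(k)}}$ and of the identity $\F_E=C^*(E)^{(0)}$, while $(3)\Rightarrow(1)$ is the substantive direction.

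For $(1)\Rightarrow(2)$: if $\alpha\gamma_z=\gamma_z\alpha$, then $\alpha(S_e)\in C^*(E)^{(1)}$, and, using that $T\in C^*(E)^{(1)}$ is an isometry, $w:=\alpha(S_e)T^*\in C^*(E)^{(1)}\cdot C^*(E)^{(-1)}\subseteq\F_E$ satisfies $wT=\alpha(S_e)T^*T=\alpha(S_e)$. Conversely, from $\alpha(S_e)=w_eT$ with $w_e\in\F_E$ one reads $\alpha(S_e)\in C^*(E)^{(1)}$ and $\alpha(P_v)=\sum_{s(e)=v}w_eTT^*w_e^*\in\F_E=C^*(E)^{(0)}$; hence $\alpha$ commutes with $\gamma$ on the generating set $\{S_e,P_v\}$ and therefore on all of $C^*(E)$. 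The implication $(1)\Rightarrow(3)$ is immediate since $\F_E=C^*(E)^\gamma$.

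For $(3)\Rightarrow(1)$, I set $\beta_z:=\alpha^{-1}\gamma_z\alpha$. Because $\alpha(\F_E)=\F_E$ and $\gamma_z$ fixes $\F_E$ pointwise, so does $\beta_z$. Proposition \ref{fixingf} then gives $\beta_z=\gamma_{\sigma(z)}$ for a continuous group homomorphism $\sigma:U(1)\to U(1)$, necessarily of the form $\sigma(z)=z^n$ for some $n\in\bZ$. Applying the resulting identity $\gamma_z\alpha=\alpha\gamma_{z^n}$ to $S_e$ yields $\alpha(S_e)\in C^*(E)^{(n)}$. It then remains to rule out $n\neq1$. If $n=0$, then $\alpha(S_e)\in\F_E$ together with $\alpha(P_v)\in\F_E$ would force $C^*(E)=\alpha(C^*(E))\subseteq\F_E$, contradicting $S_e\notin\F_E$. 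If $|n|\geq 2$, the $C^*$-subalgebra generated by $\{\alpha(S_e),\alpha(P_v)\}$ lies inside the proper subalgebra $\overline{\bigoplus_{k\in n\bZ}C^*(E)^{(k)}}$, which omits $C^*(E)^{(1)}$, again violating surjectivity of $\alpha$.

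The delicate case, which I expect to be the main obstacle, is $n=-1$. Here $V:=\alpha(T)$ is an isometry lying in $C^*(E)^{(-1)}$, so $VT\in C^*(E)^{(0)}=\F_E$ satisfies $(VT)^*(VT)=T^*V^*VT=T^*T=1$. Since $\F_E$ is a unital AF-algebra, hence finite, the isometry $VT$ must be unitary, giving $VTT^*V^*=1$; multiplying on the left by $V^*$ and on the right by $V$ and using $V^*V=1$ collapses this to $TT^*=1$. However, under the present hypotheses the isometry $T$ of (\ref{T}) satisfies $TT^*\neq1$: exactly as in the proof of Theorem \ref{normalizer}, $TT^*=1$ would force every vertex to emit exactly one edge, producing in a finite graph without sources a loop disjoint from the rest of the graph, which in turn would yield a non-trivial element of $\F_E'\cap C^*(E)=\Z(\F_E)$, contradicting the hypothesis that $\Z(\F_E)$ is trivial. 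This contradiction forces $n=1$ and hence $\alpha\gamma_z=\gamma_z\alpha$ for all $z\in U(1)$.
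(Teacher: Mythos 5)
Your proof is correct, and its overall architecture coincides with the paper's: both establish $(1)\Leftrightarrow(2)$ via the identity $\alpha(S_e)=\alpha(S_e)T^*T$, and both prove $(3)\Rightarrow(1)$ by applying Proposition \ref{fixingf} to conclude that $\alpha^{-1}\gamma_z\alpha$ is a gauge automorphism (the paper phrases this through the commutator $\alpha\gamma_z\alpha^{-1}\gamma_z^{-1}=\gamma_{\omega(z)}$, which is the same computation), deducing $\gamma_z\alpha=\alpha\gamma_{z^n}$ and using the gauge grading to force $n=\pm 1$. The one place where you genuinely diverge is the exclusion of the case $n=-1$. The paper expands $\alpha^{-1}(T)=\sum_{e}x_eS_e^*$ with $x_e\in\F_E$ and derives $\sum_e x_e^*x_e=\sum_e P_{r(e)}>1\geq\sum_e x_ex_e^*$, contradicting stable finiteness of $\F_E$; you instead note that $\alpha(T)T$ is an isometry in the finite unital AF-algebra $\F_E$, hence a unitary, which forces $TT^*=1$. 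Your variant is shorter, uses only finiteness rather than stable finiteness, and is the same device the paper itself uses inside Theorem \ref{normalizer} (where $T^mu^*$ is promoted from isometry to unitary). One small imprecision: to see that $TT^*\neq 1$ you argue that a cycle disjoint from the rest of the graph yields a nontrivial element of $\Z(\F_E)$; for the degenerate graph consisting of a single vertex with a single loop one has $\F_E=\bC$ and $\Z(\F_E)$ trivial, so that particular justification breaks down. The conclusion still holds, and more directly: $TT^*=1$ forces $E^1=\{e_v:v\in E^0\}$, so each vertex receives and (being a non-sink in a finite graph with $|E^1|=|E^0|$) emits exactly one edge, making $E$ a disjoint union of cycles and contradicting the standing hypothesis that every loop has an exit. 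With that cosmetic repair your argument is complete and matches the lemma's hypotheses exactly.
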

\begin{proof}
(1)$\Rightarrow$(3): If $\alpha$ commutes with each $\gamma_z$ then $\alpha(\F_E)\subseteq\F_E$, 
since $\F_E$ is the fixed-point algebra for the gauge action. Also, $\alpha\gamma_z=\gamma_z\alpha$ 
implies $\alpha^{-1}\gamma_z=\gamma_z\alpha^{-1}$, and thus $\alpha^{-1}(\F_E)\subseteq\F_E$ 
as well. 

(3)$\Rightarrow$(1): Suppose $\alpha(\F_E)=\F_E$. Then for each $x\in\F_E$ and $z\in U(1)$ we have 
$\alpha\gamma_z\alpha^{-1}\gamma_z^{-1}(x)=x$. Then by Proposition \ref{fixingf} there exists 
an $\omega(z)\in U(1)$ such that $\alpha\gamma_z\alpha^{-1}\gamma_z^{-1}(x)=\gamma_{\omega(z)}$. 
The mapping $z\mapsto \omega(z)$ is a continuous character of $U(1)$. Indeed, 
\begin{align*}
\gamma_{\omega(zy)} & = \alpha\gamma_{zy}\alpha^{-1}\gamma_{zy}^{-1} \\ 
 & = (\alpha\gamma_z\alpha^{-1}\gamma_z^{-1})\gamma_z(\alpha\gamma_y\alpha^{-1}\gamma_y^{-1})
\gamma_y\gamma_{zy}^{-1} \\ 
 & = \gamma_{\omega(z)}\gamma_z\gamma_{\omega(y)}\gamma_y\gamma_{zy}^{-1} \\ 
 & = \gamma_{\omega(z)}\gamma_{\omega(y)}. 
\end{align*}
Hence there exists an $m\in\bZ$ such that $\omega(z)=z^m$ and, consequently, we have 
$\alpha\gamma_z=\gamma_{z^{m+1}}\alpha$ for all $z\in U(1)$. Therefore $\gamma_z(\alpha^{-1}(S_e))
=z^{m+1}\alpha^{-1}(S_e)$ for all $e\in E^1$. Since $\alpha^{-1}$ is an automorphism, this 
implies that $C^*(E)$ is generated by the spectral subspace $C^*(E)^{(m+1)}$ of the gauge action. Thus 
$m+1=\pm 1$. However, the case $m+1=-1$ is impossible. Indeed, let $T$ be an isometry as in 
(\ref{T}). Since $1=\sum_{e\in E^1}S_e S_e^*$, we have $\alpha^{-1}(T)=\sum_{e\in E^1}x_e S_e^*$, 
where $x_e=\alpha^{-1}(T)S_e$ are partial isometries in $\F_E$ such that 
$x_e^*x_e=S_e^*S_e=P_{r(e)}$. Thus 
$$ \sum_{e\in E^1}x_e^*x_e=\sum_{e\in E^1}P_{r(e)}>1, $$
due to our assumptions on the graph $E$. On the other hand, 
$$ \sum_{e\in E^1}x_ex_e^*=\sum_{e\in E^1}\alpha^{-1}(T)S_eS_e^*\alpha^{-1}(T^*)\leq 
\alpha^{-1}(TT^*)\leq 1. $$
This yields a contradiction $ \sum_{e\in E^1}x_e^*x_e > \sum_{e\in E^1}x_ex_e^*$, since $\F_E$ 
being an AF-algebra is stably finite. 

(1)$\Rightarrow$(2): We have $\alpha(S_e)=\alpha(S_e)T^*T=\sum_{v\in E^0}\alpha(S_e)S_{e_v}^*S_{e_v}$. 
If $\alpha$ commutes with the gauge action then $\alpha(S_e)S_{e_v}^*$ belongs to $\F_E$, 
and the claim follows. 

(2)$\Rightarrow$(1): This is clear. 
\end{proof}

Now, we turn to the definition of the restricted Weyl group of $C^*(E)$. 
If $E$ has no sinks and all loops have exits, 
then each $\alpha\in\auto_{\D_E}(C^*(E))$ automatically belongs to $\auto(C^*(E),\F_E)$ by 
Proposition \ref{diagonalunitaries}, above. Thus we may consider the quotient of 
$$ \auto(C^*(E),\F_E,\D_E) := \auto(C^*(E),\D_E)\cap\auto(C^*(E),\F_E) $$ 
by $\auto_{\D_E}(C^*(E))$, which will be called the 
{\em restricted Weyl group} of $C^*(E)$ and denoted ${\mathfrak R}{\mathfrak W}_E$. That is, 
\begin{equation}\label{rweyle}
{\mathfrak R}{\mathfrak W}_E := \auto(C^*(E),\F_E,\D_E)/\auto_{\D_E}(C^*(E)). 
\end{equation}

We denote by $\P_E^k$ the collection of all unitaries in $\U(C^*(E))$ of the form 
$\sum S_\alpha S_\beta^*$ with $|\alpha|=|\beta|=k$. Clearly, each $\P_E^k$ is a finite subgroup 
of $\U(C^*(E))$, and we have $\P_E^k\subseteq\P_E^{k+1}$ for each $k$. We set $\P_E:=
\bigcup_{k=0}^\infty\P_E^k$. As shown in \cite{P}, every $u\in\N_{\F_E}(\D_E)$ can be uniquely 
written as $u=dw$, where $d\in\U(\D_E)$ and $w\in\P_E$. That is, the normalizer of 
the diagonal in $\F_E$ is a semi-direct product 
\begin{equation}\label{smallnormalizer}
\N_{\F_E}(\D_E) = \U(\D_E) \rtimes \P_E. 
\end{equation}

We denote by $\g$ the subgroup of $\auto(C^*(E))$ 
generated by automorphisms of the graph $E$ and $\lambda(\P_E\cap\U_E)^{-1}$. That is, 
\begin{equation}\label{ggroup}
\g := \langle \lambda(\P_E\cap\U_E)^{-1} \cup \Gamma_E \rangle \subseteq \auto(C^*(E)). 
\end{equation}
By Proposition \ref{weylembedding}, there is a natural embedding of $\gg$ into the Weyl group 
${\mathfrak W}_E$ of $C^*(E)$. Its restriction yields an embedding of $\g$ into the 
restricted Weyl group ${\mathfrak R}{\mathfrak W}_E$ of $C^*(E)$. Furthermore, 
$\lambda(\P_E\cap\U_E)^{-1}$ is a normal subgroup of $\g$. Hence we have 
\begin{equation}\label{gstructure}
\g = \lambda(\P_E\cap\U_E)^{-1} \Gamma_E \;\;\; \text{and} \;\;\; 
\lambda(\P_E\cap\U_E)^{-1} \cap \Gamma_E = \Gamma_E^0. 
\end{equation}

Similarly to Theorem \ref{fullinner}, in the restricted case we have the following. 

\begin{proposition}\label{restrictedinner}
Let $E$ be a finite graph without sinks and sources in which every loop has an exit, and such that 
the relative commutant of $\F_E$ in $C^*(E)$ is trivial. Then 
$$ \g\cap\inn(C^*(E))\subseteq\{\Ad(w):w\in\P_E\}. $$
\end{proposition}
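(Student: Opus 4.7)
The plan is to follow the template of Theorem \ref{fullinner}, substituting the restricted combinatorial data ($\N_{\F_E}(\D_E)$ and $\P_E$) for the unrestricted data ($\N_{C^*(E)}(\D_E)$ and $\SS_E$). The essential new ingredient is Theorem \ref{normalizer}, which lets us relocate any implementing unitary inside $\F_E$.

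Let $\beta\in\g\cap\inn(C^*(E))$, with $\beta=\Ad(y)$ for some $y\in\U(C^*(E))$. By (\ref{gstructure}) I may write $\beta=\lambda_u\alpha$ with $u\in\P_E\cap\U_E$ and $\alpha\in\Gamma_E$. Since $u\in\P_E\subseteq\F_E$, Proposition \ref{normalFautos} gives $\lambda_u(\F_E)=\F_E$, and since the graph automorphism $\alpha$ plainly preserves $\F_E$, I obtain $y\F_E y^*=\beta(\F_E)=\F_E$. The hypotheses ``no sources'' and ``$\F_E'\cap C^*(E)$ trivial'' now permit an application of Theorem \ref{normalizer}, yielding $y\in\U(\F_E)$. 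Similarly, $\P_E\subseteq\N_{\F_E}(\D_E)$ together with Proposition \ref{normalDautos} shows $\beta(\D_E)=\D_E$, so $y\in\N_{\F_E}(\D_E)$. The semi-direct product decomposition (\ref{smallnormalizer}) then produces $d\in\U(\D_E)$ and $w\in\P_E$ with $y=dw$.

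Setting $\rho:=\lambda_u\alpha\,\Ad(w^*)$, I get $\rho=\Ad(d)=\lambda_{d\varphi(d^*)}$ by (\ref{inner}). As $\rho$ fixes every vertex projection, it is implemented by $\sum_{e\in E^1}\rho(S_e)S_e^*=d\varphi(d^*)$. On the one hand this element lies in $\U(\D_E)$; on the other hand, since $u,w\in\P_E\subseteq\SS_E$ and $\alpha$ is a graph automorphism, each $\rho(S_e)$ is a finite sum of words $S_\mu S_\nu^*$, so $\sum_e\rho(S_e)S_e^*\in\SS_E$. Because $\SS_E\cap\U(\D_E)=\{1\}$, I conclude $d=\varphi(d)$; a short calculation (as in Theorem \ref{fullinner}, using that $d\in\D_E$ commutes with every $P_{r(e)}$) then shows $d$ commutes with each $S_e$, hence $d\in\Z(C^*(E))$. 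By Remark \ref{variouscenters}, triviality of $\F_E'\cap C^*(E)$ forces $\Z(C^*(E))=\bC\cdot 1$, so $d$ is a scalar, $\Ad(d)=\id$, and $\beta=\Ad(w)$ with $w\in\P_E$.

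The main obstacle I anticipate is bookkeeping around Theorem \ref{normalizer}: one has to check carefully that each factor $\lambda_u$, $\alpha$, $\Ad(w^*)$ preserves both $\F_E$ and $\D_E$ so that the decomposition $y=dw$ genuinely lives inside $\N_{\F_E}(\D_E)$, and that the hypothesis on $\F_E'\cap C^*(E)$ is strong enough both to locate $y$ in $\F_E$ and (via Remark \ref{variouscenters}) to trivialize $\Z(C^*(E))$. Once $y\in\U(\F_E)$ is secured, the rest is a faithful restricted-level transcription of the argument of Theorem \ref{fullinner}.
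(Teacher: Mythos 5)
Your argument is correct, but it takes a longer route than the paper, which obtains the proposition as an immediate corollary of results already in place: by Theorem \ref{fullinner}, every element of $\g\cap\inn(C^*(E))$ is already of the form $\Ad(w)$ with $w\in\SS_E$; since elements of $\g$ globally preserve $\F_E$, Theorem \ref{normalizer} applied to this $w$ gives $w\in\U(\F_E)$, and a word unitary lying in the gauge-fixed algebra must have $|\alpha|=|\beta|$ in each summand, so $w\in\P_E$. You instead apply Theorem \ref{normalizer} at the outset to the implementing unitary $y$ and then rerun the entire mechanism of Theorem \ref{fullinner} inside $\F_E$, replacing the decomposition $\N_{C^*(E)}(\D_E)=\U(\D_E)\rtimes\SS_E$ of (\ref{bignormalizer}) by the AF-level decomposition $\N_{\F_E}(\D_E)=\U(\D_E)\rtimes\P_E$ of (\ref{smallnormalizer}). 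The two routes rest on the same two ingredients; the paper's is shorter because it reuses Theorem \ref{fullinner} as a black box, while yours duplicates its proof but never leaves $\F_E$ once $y$ is located there, which makes the bookkeeping self-contained. Two minor points: the appeal to Remark \ref{variouscenters} to force $d$ to be a scalar is superfluous, since once $d$ is central one has $\Ad(d)=\id$ whether or not the center is trivial (this is all that Theorem \ref{fullinner} itself uses); and the equality $\lambda_u(\F_E)=\F_E$ (rather than mere inclusion, which is what Proposition \ref{normalFautos}(1) delivers) is most cleanly justified by noting that $\lambda_u^{-1}=\lambda_z$ for some $z\in\P_E\cap\U_E$, as the paper establishes when showing that $\lambda(\P_E\cap\U_E)^{-1}$ is a group.
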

\begin{proof}
By Theorem \ref{fullinner}, every element of $\g\cap\inn(C^*(E))$ is of the form $\Ad(w)$ with 
$w\in\SS_E$. Since this $\Ad(w)$ globally preserves $\F_E$, by hypothesis, Theorem  
\ref{normalizer} implies that $w\in\P_E$. 
\end{proof}

Since for each $w\in\P_E$ the corresponding inner automorphism $\Ad(w)$ of $C^*(E)$ has finite order, 
Proposition \ref{restrictedinner} immediately yields the following. 

\begin{corollary}\label{outerness}
Let $E$ be a finite graph without sinks and sources in which every loop has an exit, and such that 
the relative commutant of $\F_E$ in $C^*(E)$ is trivial. Then every element of infinite order in 
$\g$ has infinite order in $\outo(C^*(E))$ as well. 
\end{corollary}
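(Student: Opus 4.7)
The plan is to argue by contradiction, leveraging Proposition \ref{restrictedinner} together with the observation that $\P_E$ is a union of finite groups.

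First I would note that $\g$ is a group, so for any $\alpha\in\g$ and any positive integer $m$, the power $\alpha^m$ still lies in $\g$. Now suppose, toward a contradiction, that some $\alpha\in\g$ has infinite order in $\auto(C^*(E))$ but only finite order in $\outo(C^*(E))$. Then there exists $m\geq 1$ with $\alpha^m\in\inn(C^*(E))$, and of course $\alpha^m\in\g$ still has infinite order since $\alpha$ does.

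By Proposition \ref{restrictedinner}, the hypotheses on $E$ (finite, without sinks and sources, every loop has an exit, and $\F_E'\cap C^*(E)=\bC 1$) imply that $\alpha^m=\Ad(w)$ for some $w\in\P_E$. By definition of $\P_E$ as $\bigcup_{k\geq 0}\P_E^k$, this $w$ lies in some finite subgroup $\P_E^k$, so $w^n=1$ for some $n\geq 1$. Consequently $\alpha^{mn}=\Ad(w)^n=\Ad(w^n)=\id$, contradicting the assumption that $\alpha$ has infinite order.

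This rules out the possibility and establishes the claim. The proof is essentially immediate from Proposition \ref{restrictedinner} — the only substantive input beyond that result is the finiteness of each $\P_E^k$, which is clear from the definition as sums of matrix units $S_\alpha S_\beta^*$ indexed by pairs of paths of fixed length $k$ in a finite graph. There is no real obstacle here; the corollary is a structural observation packaging Proposition \ref{restrictedinner} into a convenient criterion for outerness that will be applied later in the paper.
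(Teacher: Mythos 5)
Your proof is correct and follows essentially the same route as the paper: the authors likewise deduce the corollary from Proposition \ref{restrictedinner} together with the observation that $\Ad(w)$ has finite order for every $w\in\P_E$, since each $\P_E^k$ is a finite group. Your write-up merely makes explicit the contradiction argument that the paper leaves implicit.
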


In general, it is a non-trivial matter to verify outerness of an automorphism of a graph algebra. 
Corollary \ref{outerness} solves this problem for a significant class of automorphisms. In the case 
of Cuntz algebras, an analogous result was proved in \cite[Theorem 6]{Szy}, and provided a convenient 
outerness criterion for permutative automorphisms -- probably, the most studied class 
of automorphisms of $\O_n$. 

\medskip
Now, we turn to analysis of the action of the restricted Weyl group on the diagonal MASA. 
Let $E$ be a finite graph without sinks in which every loop has an exit. If $\alpha$ is an automorphism of 
$\D_E$ then, following \cite{CHS1}, we say that $\alpha$ has property (P) if there exists a
non-negative integer $m$ such 
that the endomorphism $\alpha\varphi^m$ commutes with the shift $\varphi$. In that case, we also 
say that $\alpha$ satisfies (P) with $m$. We define 
\begin{equation}\label{propertyP}
\aee:=\{\alpha\in\auto(\D_E):\text{ both $\alpha$ and $\alpha^{-1}$ have property (P)}\}. 
\end{equation}
Clearly, $\aee$ is a subgroup of $\auto(\D_E)$. In the case of the Cuntz algebras, it was shown in 
\cite{CHS1} that the restricted Weyl group is naturally isomorphic to $\aee$. Now, 
we investigate this problem for graph algebras. 

\begin{lemma}\label{dfautos}
Let $E$ be a finite graph without sinks and sources in which every loop has an exit. Also, 
we assume that the center of $\F_E$ is trivial. 
If $\alpha\in\auto(C^*(E),\D_E)\cap\auto(C^*(E),\F_E)$ then for each $e\in E^1$ 
there exists a partial unitary $d\in\D_E$ and a partial isometry $w\in\F_E$ equal to a  
finite sum of words of the form $S_\mu S_\nu^*$ with $|\mu|=|\nu|$, and such that 
\begin{equation}\label{formdfauto}
\alpha(S_e)=dwT,   
\end{equation}
where $T$ is an isometry as in (\ref{T}).   
\end{lemma}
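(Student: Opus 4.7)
The plan is to use Lemma \ref{fautos} to reduce the problem to decomposing a specific partial isometry in $\F_E$, and then to apply the normalizer structure (\ref{smallnormalizer}) locally in corners. By Lemma \ref{fautos}(2), setting $\tilde w := \alpha(S_e) T^* \in \F_E$ gives $\alpha(S_e) = \tilde w T$, so the task reduces to showing $\tilde w = dw$ with $d \in \D_E$ a partial unitary and $w$ a finite sum of matrix units $S_\mu S_\nu^*$ with $|\mu|=|\nu|$.

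First I would establish three structural properties of $\tilde w$. The range projection $\tilde w\tilde w^* = \alpha(S_eS_e^*)$ lies in $\D_E$ since $\alpha(\D_E) = \D_E$. For the source projection, projections in $\D_E \cong C(X)$ correspond to clopens in the compact totally disconnected path space, hence to finite unions of cylinders, so $\alpha(P_{r(e)}) = \sum_i P_{\mu_i}$ as a finite sum; I would refine via (GA3) at the (sinkless) vertices to ensure each $|\mu_i|\geq 1$, and the direct calculation $T P_\mu T^* = P_{e_{s(\mu)}\mu}$ then places $\tilde w^*\tilde w = T\alpha(P_{r(e)})T^*$ in $\D_E$. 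Finally, $\tilde w \D_E \tilde w^* \subseteq \D_E$ follows by combining $S_e \D_E S_e^*\subseteq\D_E$ (a direct check from the relations), $\alpha(\D_E) = \D_E$, and $T^*\D_E T \subseteq \D_E$.

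With these properties in hand, $\tilde w$ is a partial isometry in $\F_E$ with source $\sum_i P_{\mu_i}$ and range $\sum_j P_{\nu_j}$ in $\D_E$, partially normalizing $\D_E$. I would decompose $\tilde w = \sum_{i,j} P_{\nu_j}\tilde w P_{\mu_i}$; each nonzero atomic piece has the form $S_{\nu_j} u_{ij} S_{\mu_i}^*$ with $u_{ij}$ a partial unitary in the corner $P_{r(\mu_i)}\F_E P_{r(\mu_i)}$ (nonzero requires $r(\mu_i)=r(\nu_j)$). The partial-normalization condition on $\tilde w$ transports to $u_{ij}$ normalizing the corresponding corner of $\D_E$, so applying the normalizer decomposition (\ref{smallnormalizer}) in that corner gives $u_{ij}=D_{ij}W_{ij}$, with $D_{ij}$ a partial unitary in the corner of $\D_E$ and $W_{ij}$ a finite sum of matrix units at some level $K_{ij}$. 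Substituting and taking $K=\max_{ij}K_{ij}$, all matrix-unit contributions can be rewritten at the common level $N+K$; collecting the diagonal coefficients yields the partial unitary $d\in\D_E$, and summing the matrix-unit parts produces the required finite-sum $w$.

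The principal obstacle is controlling finiteness throughout this refinement. Finiteness of the pair index $(i,j)$ is built into the setup since the source and range projections are finite sums of path projections; each local normalizer decomposition produces finitely many matrix units, and taking a maximum across this finite index set provides a single common level for $w$. One also needs the reverse partial-normalization $\tilde w^*\D_E\tilde w\subseteq\D_E$ (to interpret the $u_{ij}$'s as bona fide normalizers of the corresponding corner of $\D_E$); this is obtained analogously from $S_e^*\D_E S_e\subseteq\D_E$ together with sufficient refinement via (GA3) of the relevant images under $\alpha^{-1}$, the no-sinks assumption again ensuring that the refinements terminate.
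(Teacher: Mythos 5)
Your reduction via Lemma \ref{fautos} is fine: since $\alpha$ preserves $\F_E$ it commutes with the gauge action, so $\tilde w:=\alpha(S_e)T^*\in\F_E$ and $\alpha(S_e)=\tilde wT$; and your verification that $\tilde w$ is a partial isometry with source and range projections in $\D_E$ which normalizes $\D_E$ is correct. The gap is in the decomposition step $\tilde w=dw$. The display (\ref{smallnormalizer}) is a statement about \emph{unitaries} in $\N_{\F_E}(\D_E)$, whereas after cutting by $P_{\nu_j}(\cdot)P_{\mu_i}$ your pieces $u_{ij}=S_{\nu_j}^*\tilde wS_{\mu_i}$ are still only partial isometries (with diagonal source and range) living in $P_{r(\nu_j)}\F_E P_{r(\mu_i)}$ --- note also that your parenthetical claim that nonzero pieces force $r(\mu_i)=r(\nu_j)$ is false: for $\tilde w\in\F_E$ the compression $S_{\nu_j}^*\tilde wS_{\mu_i}$ can be a nonzero sum of longer words $S_{\mu'}S_{\nu'}^*$ with $s(\mu')=r(\nu_j)\neq r(\mu_i)=s(\nu')$. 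So after the corner decomposition you are facing exactly the same problem you started with (a normalizing partial isometry in an infinite-dimensional AF corner), and the essential content --- that such an element is a partial unitary in $\D_E$ times a \emph{finite} sum of matrix units --- has not been established. That finiteness is the whole point; it does not follow from cutting into finitely many corners, and (\ref{smallnormalizer}) as cited does not deliver it for non-unitary normalizers. One could patch this by extending each $u_{ij}$ to a genuine unitary normalizer (using that in an AF algebra two equivalent diagonal projections are linked by a diagonal-normalizing partial isometry at some finite stage), but that requires an argument you have not supplied.

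For comparison, the paper avoids this entirely: it observes that $\alpha(S_e)$ is a partial isometry normalizing $\D_E$ in $C^*(E)$ and invokes \cite[Theorem 10.1]{HPP}, which is stated precisely for partial isometry normalizers and yields $\alpha(S_e)=dv$ with $d\in\D_E$ a partial unitary and $v$ a finite sum of words $S_\mu S_\nu^*$; gauge-equivariance of $\alpha$ then forces $|\mu|=|\nu|+1$ in every word, and multiplying by $T^*$ gives $w=vT^*$. If you want to keep your order of operations (strip $T$ first), the clean fix is to cite the partial-isometry version of the normalizer theorem (i.e.\ \cite[Theorem 10.1]{HPP} restricted to $\F_E$) rather than attempting to rederive it from the unitary semidirect product decomposition.
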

\begin{proof}
For each $e\in E^1$, $S_e$ is a 
partial isometry normalizing $\D_E$ (i.e., $S_e\D_E S_e^*\subseteq\D_E$ and $S_e^*\D_E S_e 
\subseteq\D_E$). Since $\alpha(\D_E)=\D_E$, it follows that $\alpha(S_e)$ normalizes $\D_E$ 
as well. By \cite[Theorem 10.1]{HPP}, $\alpha(S_e)$ equals $dv$ for some partial unitary $d\in\D_E$ 
and $v$ a partial isometry which can be written as the sum of a finite collection of words 
$S_\mu S_\nu^*$. Since $\alpha$ commutes with 
the gauge action by Lemma \ref{fautos}, we have $\gamma_z(v)=zv$ for each $z\in U(1)$. 
Thus for each of the words in the decomposition of $v$ we have $|\mu|=|\nu|+1$. 
Now let $T$ be as in (\ref{T}). Then $\alpha(S_e)=dv=dwT$ 
is the desired decomposition, with $w=vT^*$. 
\end{proof}

\begin{proposition}\label{alphaaction}
Let $E$ be a finite graph without sinks and sources in which every loop has an exit. Also, 
we assume that the center of $\F_E$ is trivial. Let $\alpha\in\auto(C^*(E),\D_E)\cap\auto(C^*(E),\F_E)$ 
and let $T$ be an isometry as in (\ref{T}). For each $e\in E^1$ let $d_e$ and $w_e$ be as in 
Lemma \ref{dfautos} so that $\alpha(S_e)=d_ew_eT$. Then for each path $\mu$ of length $r$ we have 
$$ \alpha(S_\mu)=D_\mu W_\mu T^r, $$
where $D_\mu\in\D_E$  and $W_\mu$, a sum of words in $\F_E$, are such that 
\begin{align*}
D_\mu & = (d_{\mu_1}w_{\mu_1}T)(d_{\mu_2}w_{\mu_2}T)\cdots(d_{\mu_{r-1}}w_{\mu_{r-1}}T)
d_{\mu_r}(w_{\mu_{r-1}}T)^*\cdots(w_{\mu_1}T)^*, \\
W_\mu & = (w_{\mu_1}T)(w_{\mu_2}T)\cdots(w_{\mu_{r-1}}T)w_{\mu_r}T^{*(r-1)}. 
\end{align*}
Furthermore, $D_\mu$ is a partial unitary and $W_\mu$ is a partial isometry. Thus we also have 
$$ \alpha(P_\mu)=D_\mu D_\mu^*W_\mu T^rT^{*r}W_\mu^*. $$
\end{proposition}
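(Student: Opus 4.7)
The plan is to prove the proposition by induction on $r = |\mu|$. The base case $r = 1$ is immediate from Lemma \ref{dfautos}: interpreting the empty products in $D_\mu$ and $W_\mu$ as the identity, and using $T^{*0} = 1$, the formulas reduce to $D_{\mu_1} = d_{\mu_1}$ and $W_{\mu_1} = w_{\mu_1}$, which is exactly the content of the lemma.

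For the inductive step I split $\mu = \mu' \mu_r$ with $|\mu'| = r-1$ and use multiplicativity $\alpha(S_\mu) = \alpha(S_{\mu'})\alpha(S_{\mu_r})$. Setting $B_j = w_{\mu_j} T$ and $A_j = d_{\mu_j} B_j = \alpha(S_{\mu_j})$, the cancellation $T^{*(r-1)} T^r = T$ yields
\[ W_\mu T^r = B_1 B_2 \cdots B_r, \]
so the claimed identity $\alpha(S_\mu) = D_\mu W_\mu T^r$ reduces to
\[ A_1 \cdots A_{r-1} \, d_{\mu_r} \bigl( B_{r-1}^* \cdots B_1^* B_1 \cdots B_{r-1} \bigr) B_r = A_1 A_2 \cdots A_r. \]
The parenthesized factor is a projection, and the equality follows from three ingredients: (i) the identity $(d_{\mu_j}^* d_{\mu_j}) B_j = B_j$, built into the normalizer-polar decomposition of \cite[Theorem 10.1]{HPP} used in Lemma \ref{dfautos}, which forces $B_j^* B_j = A_j^* A_j = \alpha(P_{r(\mu_j)})$; (ii) the fact that $d_{\mu_r}$ and $\alpha(P_{s(\mu_r)})$ both lie in $\D_E$ and hence commute; (iii) the absorption $\alpha(P_{s(\mu_r)}) \alpha(S_{\mu_r}) = \alpha(S_{\mu_r})$. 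Together these force the inner projection to act trivially when sandwiched between $d_{\mu_r}$ on the left and $B_r$ on the right.

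To conclude that $D_\mu$ actually lies in $\D_E$ and is a partial unitary, and that $W_\mu$ is a partial isometry of the stated sum-of-words form, I invoke the uniqueness clause of \cite[Theorem 10.1]{HPP}. The partial isometry $\alpha(S_\mu)$ normalizes $\D_E$, so it admits an essentially unique factorization $D \cdot V$ with $D$ a partial unitary in $\D_E$ and $V$ a finite sum of words $S_\alpha S_\beta^*$. Since $\alpha$ commutes with the gauge action by Lemma \ref{fautos}, $V$ is homogeneous of gauge weight $r$, hence $V = W T^r$ with $W$ a sum of $S_\alpha S_\beta^*$ satisfying $|\alpha| = |\beta|$. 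Matching this decomposition against the explicit product above identifies $D = D_\mu$ and $W = W_\mu$.

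For the final formula, I write $\alpha(P_\mu) = \alpha(S_\mu)\alpha(S_\mu)^* = D_\mu (W_\mu T^r T^{*r} W_\mu^*) D_\mu^*$. The middle factor equals $(W_\mu T^r)(W_\mu T^r)^*$, i.e.\ the range projection of the partial isometry $W_\mu T^r$, which by the uniqueness decomposition above lies in $\D_E$. Since $D_\mu \in \D_E$ commutes with $\D_E$, and $D_\mu^* D_\mu = D_\mu D_\mu^*$ by partial unitarity, the expression rearranges to $D_\mu D_\mu^* W_\mu T^r T^{*r} W_\mu^*$, as claimed. The main obstacle throughout is that the explicit formula for $D_\mu$ is a product of partial isometries with a single diagonal factor inserted in the middle, and is not manifestly in $\D_E$; the crux is recognizing it as the diagonal component of $\alpha(S_\mu)$ via the uniqueness of the normalizer-polar decomposition.
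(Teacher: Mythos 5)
Your inductive computation of the identity $\alpha(S_\mu)=D_\mu W_\mu T^r$ is sound and is essentially the argument the paper sketches (the paper writes out only $r=2$, using the same two moves: collapse $W_\mu T^r$ to $(w_{\mu_1}T)\cdots(w_{\mu_r}T)$ via $T^*T=1$, and absorb the domain projection of $(w_{\mu_1}T)\cdots(w_{\mu_{r-1}}T)$ after commuting it past $d_{\mu_r}$ inside $\D_E$). The genuine gap is in your final step, where you claim that $D_\mu$ is a partial unitary in $\D_E$ ``via the uniqueness of the normalizer-polar decomposition''. Uniqueness in \cite[Theorem 10.1]{HPP} only distinguishes between two factorizations that are \emph{both already known} to be of the prescribed form (a partial unitary in $\D_E$ times a suitably normalized finite sum of words); it cannot be used to conclude that the first factor of some other factorization of $\alpha(S_\mu)$ lies in $\D_E$. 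Indeed, $\alpha(S_\mu)=(Du)(u^*V)$ for any unitary $u$ is a factorization whose first factor is not diagonal, and nothing in your argument rules out that $D_\mu(W_\mu T^r)$ is of this bad kind. So the very statement you flag as ``not manifestly'' true, namely $D_\mu\in\D_E$, is exactly what the uniqueness clause cannot supply, and it is left unproved.

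The repair is short and is what the paper actually does. Put $V=(w_{\mu_1}T)\cdots(w_{\mu_{r-1}}T)$, a finite sum of words $S_\alpha S_\beta^*$ and hence a partial isometry normalizing $\D_E$. Your own formulas give $D_\mu=\alpha(S_{\mu'})\,d_{\mu_r}\,V^*=(D_{\mu'}V)d_{\mu_r}V^*=D_{\mu'}\,(Vd_{\mu_r}V^*)$, and $Vd_{\mu_r}V^*\in\D_E$ because $V$ normalizes $\D_E$, while $D_{\mu'}\in\D_E$ by the induction hypothesis; thus $D_\mu\in\D_E$, and it is a partial unitary as a product of two partial unitaries in the commutative algebra $\D_E$. (For $r=2$ this is precisely the paper's closing remark that $D_\mu=d_{\mu_1}(w_{\mu_1}Td_{\mu_2}T^*w_{\mu_1}^*)$.) With this in hand, your derivation of the formula for $\alpha(P_\mu)$ goes through unchanged. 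Two smaller points: your ingredient (i), the equality $B_j^*B_j=A_j^*A_j$, relies on the support normalization $d_{\mu_j}^*d_{\mu_j}w_{\mu_j}=w_{\mu_j}$, which Lemma \ref{dfautos} does not state explicitly; but your argument does not actually need it, since the absorption $A_1\cdots A_{r-1}(V^*V)=D_{\mu'}VV^*V=A_1\cdots A_{r-1}$ already follows from the induction hypothesis together with the commutation of $V^*V$ and $d_{\mu_r}$ in $\D_E$. Also, the assertion that the parenthesized factor $V^*V$ is a projection requires noting that a product of partial isometries whose domain and range projections all lie in $\D_E$ is again a partial isometry.
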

\begin{proof}
This is established by a somewhat tedious but not complicated inductive argument. We illustrate it with 
the case $r=2$ only. Since $T^*T=1$ and $w_{\mu_1}$ is a partial isometry, we have 
$$ \alpha(S_{\mu_1}S_{\mu_2})=(d_{\mu_1}w_{\mu_1}T)(d_{\mu_2}w_{\mu_2}T)=
d_{\mu_1}w_{\mu_1}(w_{\mu_1}^*w_{\mu_1})(Td_{\mu_2}T^*)Tw_{\mu_2}T. $$
But both $w_{\mu_1}^*w_{\mu_1}$ and $Td_{\mu_2}T^*$ belong to $\D_E$ (since $w_{\mu_1}$ and 
$T$ are partial isometries normalizing $\D_E$). Thus the above expression equals 
$$ [(d_{\mu_1}w_{\mu_1}T)d_{\mu_2}(w_{\mu_1}T)^*][(w_{\mu_1}T)w_{\mu_2}T^*]T^2, $$ 
as required. Note that $D_\mu=(d_{\mu_1}w_{\mu_1}T)d_{\mu_2}(w_{\mu_1}T)^*=
d_{\mu_1}(w_{\mu_1}Td_{\mu_2}T^*w_{\mu_1}^*)$ is a partial unitary in $\D_E$, and 
$W_\mu=(w_{\mu_1}T)w_{\mu_2}T^*$ is a sum of words in $\F_E$ and a partial isometry 
(as a product of partial isometries with mutually commuting domain and range projections). 
\end{proof}

\begin{corollary}\label{dimensioncount}
Keeping the notation and hypothesis from Proposition \ref{alphaaction}, let 
$$ M=min\{k:(\forall e\in E^1)\,d_e\in\D_E^k,\,w_e\in\F_E^k\}. $$ 
Then $D_\mu\in\D_E^{M+|\mu|-1}$ and $W_\mu\in\F_E^{M+|\mu|-1}$. 
\end{corollary}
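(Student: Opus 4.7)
The plan is to proceed by induction on $r = |\mu|$, using two auxiliary facts. The base case $r=1$ is immediate from the definition of $M$, since $D_{\mu_1} = d_{\mu_1} \in \D_E^M$ and $W_{\mu_1} = w_{\mu_1} \in \F_E^M$.

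First I would establish the following conjugation estimate: for every $k \geq 0$,
$$ T \D_E^k T^* \subseteq \D_E^{k+1} \qquad \text{and} \qquad T \F_E^k T^* \subseteq \F_E^{k+1}. $$
This is a short direct computation using $T = \sum_{v \in E^0} S_{e_v}$: for a generator $S_\alpha S_\beta^*$ of $\F_E^k$, only the summand with $v = s(\alpha)$, $v' = s(\beta)$ survives in $\sum_{v,v'} S_{e_v}(S_\alpha S_\beta^*)S_{e_{v'}}^*$, yielding $S_{(e_{s(\alpha)},\alpha)} S_{(e_{s(\beta)},\beta)}^* \in \F_E^{k+1}$, and similarly for the diagonal.

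The main technical (but routine) point is to extract from the explicit products in Proposition \ref{alphaaction} the recursive identities
$$ W_\mu = w_{\mu_1} \cdot T W_\nu T^*, \qquad D_\mu = d_{\mu_1} \cdot w_{\mu_1} T D_\nu T^* w_{\mu_1}^*, $$
where $\nu = (\mu_2, \ldots, \mu_r)$ has length $r-1$. These follow by peeling off the initial factor $\alpha(S_{\mu_1}) = d_{\mu_1}(w_{\mu_1}T)$ at the front and its adjoint at the back (in the case of $D_\mu$), or the initial $w_{\mu_1} T$ together with a single $T^*$ absorbed from the trailing $T^{*(r-1)}$ (in the case of $W_\mu$), so that what remains in the middle is exactly the corresponding expression attached to the shorter path $\nu$.

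With these identities in hand, the inductive step is immediate. The hypothesis gives $W_\nu \in \F_E^{M+r-2}$ and $D_\nu \in \D_E^{M+r-2}$; the conjugation estimate lifts these to $T W_\nu T^* \in \F_E^{M+r-1}$ and $T D_\nu T^* \in \D_E^{M+r-1}$. Since $w_{\mu_1}, d_{\mu_1} \in \F_E^M \subseteq \F_E^{M+r-1}$ and $\F_E^{M+r-1}$ is a subalgebra, both $W_\mu$ and $D_\mu$ are products of elements in $\F_E^{M+r-1}$ and therefore lie there. For $W_\mu$ this is the desired conclusion. For $D_\mu$, Proposition \ref{alphaaction} already guarantees $D_\mu \in \D_E$, and one concludes using the standard identification $\D_E \cap \F_E^{M+r-1} = \D_E^{M+r-1}$, which holds because $\D_E^{M+r-1}$ is maximal abelian in the finite-dimensional algebra $\F_E^{M+r-1}$ while $\D_E$ itself is abelian, so that any element of $\D_E \cap \F_E^{M+r-1}$ must commute with $\D_E^{M+r-1}$ and therefore lie in it.
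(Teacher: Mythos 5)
Your proof is correct and follows the route the paper intends: the corollary is stated without proof as an immediate consequence of the explicit formulas in Proposition \ref{alphaaction}, and your induction via the recursions $W_\mu = w_{\mu_1}\,T W_\nu T^*$ and $D_\mu = d_{\mu_1} w_{\mu_1}\,T D_\nu T^*\, w_{\mu_1}^*$ together with $T\F_E^k T^*\subseteq \F_E^{k+1}$, $T\D_E^k T^*\subseteq \D_E^{k+1}$ is precisely the degree bookkeeping the authors leave to the reader. Your closing step, identifying $\D_E\cap\F_E^{M+r-1}=\D_E^{M+r-1}$ because $\D_E^{N}$ is maximal abelian in the finite-dimensional algebra $\F_E^{N}$, is also valid and neatly handles the only point that is not a pure product-of-subalgebras argument.
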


\begin{remark}\label{alphaactionremark}
\rm In fact, the conclusion of Proposition \ref{alphaaction} remains valid with no hypothesis on the graph 
$E$ and for any endomorphism $\alpha$ of $C^*(E)$, if we know that $\alpha(S_e)=d_ew_eT$ 
for each $e\in E^1$. 
\end{remark}

\begin{remark}\label{pn1gauge}
\rm Let $T=\sum_{v\in E^0}S_{e_v}$ and $T'=\sum_{v\in E^0}S_{f_v}$ be two isometries as in 
(\ref{T}). Set $\widetilde{U}=\sum_{v\in E^0}S_{f_v}S_{e_v}^*$. Then $\widetilde{U}$ is a 
partial isometry in the finite dimensional $C^*$-algebra $\F_E^1$. We may extend it to a unitary $U\in
\P_E^1$, and then we have $T'=UT$. Thus if $\alpha\in\auto(C^*(E),\F_E)\cap\auto(C^*(E),\D_E)$ and 
$\alpha(S_e)=d_ew_eT=d_e'w_e'T'$, as in Lemma \ref{dfautos}, then $d_e'=d_e$ and $w_e'U=w_e$. 
In particular, if we fix $T$, then for each $e\in E^1$ there is a unique such $w_e$ which satisfies 
$\alpha(S_e)=d_e w_e T$ and $w_e=\alpha(S_e S_e^*)w_e TT^*$. 
\end{remark}

Let $\alpha$ be an endomorphism of $C^*(E)$, where $E$ is a finite graph without sinks. Define 
a unital, completely positive map $\Phi_\alpha:C^*(E)\to C^*(E)$ by 
\begin{equation}\label{Phialpha}
\Phi_\alpha(x)=\sum_{e\in E^1}\alpha(S_e)x\alpha(S_e^*). 
\end{equation}
Then the following braiding relation holds
\begin{equation}\label{braiding}
\alpha\varphi=\Phi_\alpha\alpha. 
\end{equation}
If $\alpha=\lambda_u$ for some $u\in\U_E$, then $\Phi_\alpha=\Ad(u)\circ\varphi$. 

\medskip
Now, we are in the position to show how elements of the restricted Weyl group act on 
the diagonal, by automorphisms of $\D_E$ (or, equivalently, homeomorphisms of its spectrum) 
which eventually commute with the shift. 

\begin{theorem}\label{eventuallycommute}
Let $E$ be a finite graph without sinks and sources in which every loop has an exit. Also, 
we assume that the center of $\F_E$ is trivial. 
If $\alpha\in\auto(C^*(E),\D_E)\cap\auto(C^*(E),\F_E)$ then the restriction of $\alpha$ to $\D_E$ 
belongs to $\aee$. This yields a group homomorphism 
$$ \res:\auto(C^*(E),\D_E)\cap\auto(C^*(E),\F_E)\longrightarrow\aee $$
and a group embedding 
$$ {\mathfrak R}{\mathfrak W}_E \hookrightarrow \aee. $$
\end{theorem}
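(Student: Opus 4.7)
The plan is to establish property (P) for $\alpha|_{\D_E}$ (the argument for $\alpha^{-1}|_{\D_E}$ being symmetric, since $\alpha^{-1}$ lies in the same intersection $\auto(C^*(E),\F_E,\D_E)$) via a direct computation based on Proposition \ref{alphaaction}; the statements about $\res$ and $\mathfrak{R}\mathfrak{W}_E$ will then follow formally. Property (P) amounts to finding $m\geq 0$ with $\alpha\varphi(y)=\varphi\alpha(y)$ for every $y\in\varphi^m(\D_E)$. Since $\varphi^m(\D_E)$ is generated by the projections $\varphi^m(P_\nu)=\sum_{|\sigma|=m,\,r(\sigma)=s(\nu)}P_{\sigma\nu}$, it suffices to prove the stronger (generator-wise) statement
\[
\alpha(\varphi(P_\mu))=\varphi(\alpha(P_\mu))\qquad\text{for every path}\;\mu\;\text{with}\;|\mu|\geq m.
\]

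Fix $M$ as in Corollary \ref{dimensioncount} and use Lemma \ref{dfautos} to write $\alpha(S_e)=d_e w_e T$ with $d_e\in\D_E^M$ and $w_e\in\F_E^M$. I propose to take $m=M$. To verify the identity, expand both sides using Proposition \ref{alphaaction}: set $V_\mu:=W_\mu T^{|\mu|}$, so that $\alpha(P_\mu)=D_\mu D_\mu^*V_\mu V_\mu^*$, and observe the recursive factorizations
\[
V_{e\mu}=(w_e T)V_\mu,\qquad D_{e\mu}=(d_e w_e T)D_\mu(w_e T)^*.
\]
The left-hand side $\alpha(\varphi(P_\mu))=\sum_{r(e)=s(\mu)}\alpha(P_{e\mu})$ then unpacks, via these recursions, into a sum of partial isometries obtained by prepending $(w_e T)$'s to those of $V_\mu$, conjugated appropriately with $d_e$'s. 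The right-hand side $\varphi(\alpha(P_\mu))=\sum_g S_g\alpha(P_\mu)S_g^*$ involves only bare $S_g$'s. Matching the two uses the commutation $S_e u=\varphi(u)S_e$ for $u\in B_E$, the relations $T^*T=1$ and $TT^*\in\D_E$, and the telescoping of partial-isometry products that appears in Proposition \ref{alphaaction}.

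The hard part will be the bookkeeping in this algebraic identity. The hypothesis $|\mu|\geq M$ is essential: it guarantees that the depth-$M$ data $\{d_e,w_e\}_{e\in E^1}$ contributed by $\alpha$ sit entirely in the leading portion of the expansion of $\alpha(P_\mu)$, leaving the tail long enough that the outer $\varphi$ acts independently of these contributions and reproduces the same effect as $\alpha$'s internal prepending via $\alpha(S_e)$. In spirit the computation will mirror, and indeed iterate, the inductive verification of Proposition \ref{alphaaction} itself---``a somewhat tedious but not complicated inductive argument''---with the depth-$M$ boundary playing the role of the threshold past which the two expressions stabilize.

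Once property (P) has been established for both $\alpha$ and $\alpha^{-1}$, we have $\alpha|_{\D_E}\in\aee$, so $\res:\auto(C^*(E),\F_E,\D_E)\to\aee$ is well-defined. Restriction preserves composition, hence $\res$ is a group homomorphism; its kernel is exactly $\{\alpha:\alpha|_{\D_E}=\id\}=\auto_{\D_E}(C^*(E))$, since $\D_E$ is a MASA (this uses the argument of Proposition \ref{diagonalunitaries} to identify the stabilizer of $\D_E$). Passing to the quotient by this kernel yields the advertised injective homomorphism $\mathfrak{R}\mathfrak{W}_E\hookrightarrow\aee$.
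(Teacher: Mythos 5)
There is a genuine gap, and it is located exactly at the step you defer as ``bookkeeping.'' Your reduction replaces the statement that needs proving --- $\alpha\varphi(x)=\varphi\alpha(x)$ for $x$ in the subalgebra $\varphi^m(\D_E)$ --- by the ``stronger (generator-wise) statement'' $\alpha(\varphi(P_\mu))=\varphi(\alpha(P_\mu))$ for every single path $\mu$ with $|\mu|\geq m$. That stronger statement is false in general, so no amount of bookkeeping will verify it. To see this concretely, take $\alpha=\lambda_u$ with $u\in\P_E^k\cap\U_E$. The braiding relation gives $\lambda_u\varphi=\Ad(u)\circ\varphi\circ\lambda_u$, so your identity for an individual $P_\mu$ is equivalent to $u$ commuting with $\varphi(\lambda_u(P_\mu))$. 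Since $\lambda_u|_{\D_E}$ is an automorphism and the projections $P_\mu$ with $|\mu|\geq m$ span a dense subspace of $\D_E$, the elements $\varphi(\lambda_u(P_\mu))$ span a dense subspace of $\varphi(\D_E)$; hence your claim would force $u\in\varphi(\D_E)'\cap C^*(E)$. This fails for most permutative unitaries --- e.g.\ for $\sigma=(25,63)$ of Example \ref{example2} the term $S_{e_2e_5}S_{e_6e_3}^*$ does not commute with $\varphi(P_{e_3})$. The point you lose in passing to individual generators is that $\varphi^m(\D_E)$ is generated not by the $P_{\sigma\nu}$ themselves but by the averaged projections $\varphi^m(P_\nu)=\sum_{|\sigma|=m}P_{\sigma\nu}$, and the summation over all length-$m$ prefixes is precisely what makes the two sides match (it is what lets $\sum_e\alpha(S_e)(\cdot)\alpha(S_e)^*$ collapse to $\sum_g S_g(\cdot)S_g^*$ via $\sum_e\alpha(S_eS_e^*)=1$).

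There is a second, related misdirection: you aim to prove property (P) for $\alpha$ head-on, whereas the computation that actually closes (and is the one the paper performs) is $\Phi_\alpha\varphi^m=\varphi^{m+1}$ on $\D_E$, where $\Phi_\alpha(x)=\sum_e\alpha(S_e)x\alpha(S_e)^*$ and $m$ is large enough that every $w_e$ from the decomposition $\alpha(S_e)=d_ew_eT$ lies in $\F_E^{m+1}$. Via $\alpha\varphi=\Phi_\alpha\alpha$ this identity reads $\alpha^{-1}\varphi^{m+1}=\varphi\alpha^{-1}\varphi^m$, i.e.\ it is property (P) for $\alpha^{-1}$, not for $\alpha$; property (P) for $\alpha$ is then obtained by running the same argument with $\alpha^{-1}$ in place of $\alpha$ (which also shows your choice $m=M$ is not the right threshold for $\alpha$ itself --- the correct $m$ is governed by the depth of $\alpha^{-1}$'s data). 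Your closing paragraph on $\res$ being a homomorphism with kernel $\auto_{\D_E}(C^*(E))$ is fine and agrees with the paper, but the core analytic step needs to be redone along these lines.
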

\begin{proof}
It suffices to show that there exists an $m$ such that $\alpha^{-1}\varphi^{m+1}=\varphi\alpha^{-1}
\varphi^m$ on $\D_E$. But $\alpha\varphi\alpha^{-1}\varphi^m=\Phi_\alpha\varphi^m$. Thus, we must 
show that $\Phi_\alpha\varphi^m=\varphi^{m+1}$ on $\D_E$ for a sufficiently large $m$. To this end, 
for each $e\in E^1$ write $\alpha(S_e)=d_ew_eT$ as in Lemma \ref{dfautos}. Let $m$ be so large that 
all $w_e$ belong to $\F_E^{m+1}$. Then, for $x\in\D_E$, using relation (\ref{commutation}) we 
have $T\varphi^m(x)=\varphi^{m+1}(x)T$ and each $w_e$ commutes with $\varphi^{m+1}(x)$. 
Consequently,  we have 
$$ \Phi_\alpha\varphi^m(x)=\sum_{e\in E^1}d_ew_eT\varphi^m(x)T^*w_e^*d_e^*=
\varphi^{m+1}(x)\sum_{e\in E^1}d_ew_eTT^*w_e^*d_e^*=\varphi^{m+1}(x), $$
as required. 

Finally, the kernel of the $\res$ homomorphism coincides with 
$\auto_{\D_E}(C^*(E))$. Thus, the restriction gives rise to a homomorphic embedding of the 
restricted Weyl group ${\mathfrak R}{\mathfrak W}_E$ into $\aee$. 
\end{proof}

In the case of $\O_n$, it was shown in \cite{CHS1} that the restriction mapping from Theorem 
\ref{eventuallycommute} is surjective. In this way, the restricted Weyl group of the Cuntz algebra 
has been identified with the group of homeomorphisms which (along with their inverses) eventually 
commute with the full one-sided $n$-shift. The more general case of graph algebras is more complicated.  
We would like to pose it as an open problem to determine the exact class 
of automorphisms of $\D_E$ (or, equivalently, homeomorphisms of the underlying space) which 
arise as restrictions of automorphisms of the graph algebra which preserve both the diagonal and 
the core AF-subalgebra.


\section{The localized automorphisms}\label{sectionL}

Throughout this section, we assume that $E$ is a finite graph without sinks. 
Recall that an endomorphism $\lambda_u$ of $C^*(E)$ is called {\em localized} if the corresponding 
unitary $u$ belongs to a finite dimensional algebra $\F_E^k\cap\U_E$ for some $k$. 
Our main aim in this section is to produce an invertibility criterion for localized 
endomorphisms, analogous to \cite[Theorem 3.2]{CS}. 

\medskip
Let $u$ be unitary in $\F_E^k\cap\U_E$, for some fixed $k\geq 1$. Using (\ref{uk}) and (\ref{uaction}) 
we see that $\lambda_u(x)={\rm Ad}(u_r)(x)$ for all $x\in\F_E^r$ and $r\geq 1$. Following \cite{Szy}, 
for each pair $e, f \in E^1$ we define a linear map $a_{e,f}^u: \F_E^{k-1} \to \F_E^{k-1}$ by
\begin{equation}\label{aefu}
a_{e,f}^u(x) = S_e^* u^* x u S_f, \;\;\; x \in \F_E^{k-1}.
\end{equation}
Denote $V_k := \F_E^{k-1}/\D^0_E$, the quotient vector space, and let  ${\mathcal L}(V_k)$ 
be the space of linear maps from $V_k$ to itself. 
Since $a_{e,f}^u(\D^0_E) \subseteq \D^0_E$, there is an induced map
$\tilde{a}_{e,f}^u: V_k \to V_k$. Now we 
define $A_u$ as the subring of ${\mathcal L}(V_k)$ generated by
$\{\tilde{a}_{e,f}^u  \ : \ e, f\in E^1 \}$.

We denote by $H$ the linear span of the generators $S_e$'s. Let $u$ be as above. 
Following \cite[p. 386]{CP}, we define inductively
\begin{equation}
\Xi_0 = \F_E^{k-1}, \quad
\Xi_{r} = \lambda_u(H)^* \Xi_{r-1} \lambda_u(H) \, , \ r \geq 1 \ .
\end{equation}
Then $\{\Xi_r\}_r$ is a non-increasing sequence of finite dimensional, self-adjoint subspaces 
of $\F_E^{k-1}$ and thus 
it eventually stabilizes. If $\Xi_p=\Xi_{p+1}$ then we have $\Xi_u:=\bigcap_{r=0}^\infty\Xi_r=\Xi_p$. 

If $\alpha,\beta$ are paths of length $r$, then we denote by $T_{\alpha,\beta}$ the linear map from 
$\F_E^{k-1}$ to itself defined by $T_{\alpha,\beta}=a_{\alpha_r,\beta_r}^u\cdots a_{\alpha_1,\beta_1}^u$. 
We have $T_{\alpha,\beta}(x)=S_\alpha^*\Ad(u_r^*)(x)S_\beta$ for all $x\in\F_E^{k-1}$. Note 
that $T_{\alpha,\beta}T_{\mu,\nu}=0$ if either $\alpha\mu$ or $\beta\nu$ does not form a path. It easily 
follows from our definitions that the space $\Xi_r$ is linearly spanned by elements of the form 
$T_{\alpha,\beta}(x)$, for $\alpha,\beta\in E^r$, $x\in\F_E^{k-1}$. 

\begin{theorem}\label{main}
Let $E$ be a finite graph without sinks, and let $u\in\U_E$ be a unitary in $\F_E^k$ for some $k \geq 1$. 
Then the following conditions are equivalent:
\begin{itemize}
\item[(1)] $\lambda_u$ is invertible with localized inverse;
\item[(2)] the sequence of unitaries $\{ {\rm Ad}(u_m^*)(u^*)\}_{m \geq 1}$ eventually stabilizes;
\item[(3)] the ring $A_u$ is nilpotent;
\item[(4)] $\Xi_u \subseteq \D_E^0$.
\end{itemize}
\end{theorem}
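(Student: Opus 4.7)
The four-way equivalence naturally splits: (1) $\Leftrightarrow$ (2) and (3) $\Leftrightarrow$ (4) are almost formal, and the analytic/combinatorial bridge is provided by (1) $\Rightarrow$ (4) and (4) $\Rightarrow$ (2). The computational backbone throughout is the pair of identities $\lambda_u(x) = \Ad(u_r)(x)$ for $x \in \F_E^r$ and $\lambda_u(S_\alpha) = u_r S_\alpha$ for $|\alpha| = r$, both immediate consequences of the relation $S_e u = \varphi(u) S_e$.

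For (1) $\Leftrightarrow$ (2), if $\lambda_u^{-1} = \lambda_w$ is localized with $w \in \F_E^N \cap \U_E$, then $\lambda_u(w) = u^*$ forces $\Ad(u_r^*)(u^*) = w$ for all $r \geq N$. Conversely, if $\{\Ad(u_m^*)(u^*)\}$ stabilizes to some $w$, a direct calculation shows $w \in \U_E$ (it commutes with every $P_v$ because $u_m$ does, and is unitary as a conjugate of $u^*$) and $\lambda_u(w) = u_m w u_m^* = u^*$ for $m$ past the stabilization point. Hence $\lambda_u \lambda_w = \id$, and the injectivity of $\lambda_u$ from Proposition \ref{coreinjectivity} upgrades this to $\lambda_u^{-1} = \lambda_w$, localized. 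For (3) $\Leftrightarrow$ (4), the inductive definition of $\Xi_r$ gives $\Xi_r = \operatorname{span}\{T_{\alpha,\beta}(x) : \alpha, \beta \in E^r,\; x \in \F_E^{k-1}\}$, and each $T_{\alpha,\beta}$ descends on the finite-dimensional quotient $V_k$ to the composition $\tilde{a}^u_{\alpha_r,\beta_r} \circ \cdots \circ \tilde{a}^u_{\alpha_1,\beta_1}$. So (4) translates exactly to the vanishing of all sufficiently long products of generators of $A_u$ on $V_k$, which is nilpotency of $A_u$.

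For (1) $\Rightarrow$ (4), the first step is to note the identity
\begin{equation*}
T_{\alpha,\beta}(x) \;=\; \lambda_u\bigl(S_\alpha^* \lambda_u^{-1}(x) S_\beta\bigr),
\end{equation*}
which follows by applying $\lambda_u^{-1}$ to both sides using $\lambda_u(S_\alpha^*) = S_\alpha^* u_r^*$ and $\lambda_u(S_\beta) = u_r S_\beta$. Under (1), $\lambda_u^{-1} = \lambda_w$ with $w \in \F_E^N$ gives $\lambda_u^{-1}(x) = w_{k-1} x w_{k-1}^* \in \F_E^{N+k-2}$ for $x \in \F_E^{k-1}$. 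For $|\alpha| = |\beta| = r \geq N+k-1$, every term $S_\alpha^* S_\mu S_\nu^* S_\beta$ in the resulting expansion collapses by the Cuntz-Krieger relations to either $0$ or $P_{r(\alpha)}$, placing $S_\alpha^* \lambda_u^{-1}(x) S_\beta$ in $\D_E^0$. Since $\lambda_u$ fixes $\D_E^0$ pointwise, $T_{\alpha,\beta}(x) \in \D_E^0$, hence $\Xi_r \subseteq \D_E^0$.

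For (4) $\Rightarrow$ (2), the recurrence $u_m = u\,\varphi(u_{m-1})$ gives $\Ad(u_m^*)(u^*) = \varphi(u_{m-1}^*)\, u^*\, \varphi(u_{m-1})$, which expands via $\varphi(y) = \sum_e S_e y S_e^*$ as
\begin{equation*}
\Ad(u_m^*)(u^*) \;=\; \sum_{\alpha, \beta \in E^1,\; \gamma, \delta \in E^{m-1}} S_{\alpha\gamma}\, T_{\gamma,\delta}\bigl(S_\alpha^* u^* S_\beta\bigr)\, S_{\beta\delta}^*.
\end{equation*}
When $m-1 \geq R$ and $\Xi_{m-1} \subseteq \D_E^0$, each $T_{\gamma,\delta}(\cdot)$ is a scalar combination of vertex projections, so the right-hand side collapses into $\F_E^m$. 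A short calculation using $u P_v = P_v u$ then shows that $\varphi^m(u)$ commutes with every element of $\F_E^m$; combined with the alternative recurrence $\Ad(u_{m+1}^*)(u^*) = \varphi^m(u^*)\, \Ad(u_m^*)(u^*)\, \varphi^m(u)$ (from $u_{m+1} = u_m\,\varphi^m(u)$), this yields $\Ad(u_{m+1}^*)(u^*) = \Ad(u_m^*)(u^*)$ for all $m \geq R+1$, i.e., stabilization. The main obstacle will be precisely this bookkeeping: the recurrences naively push $\Ad(u_m^*)(u^*)$ into ever higher algebras $\F_E^{k+m-1}$, and only the combination of $\Xi_u \subseteq \D_E^0$ with the commutativity of $\varphi^m(u)$ on $\F_E^m$ (a non-trivial fact that crucially uses $u \in \U_E$) collapses the chain and delivers stabilization.
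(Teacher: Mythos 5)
Your proposal is correct and follows essentially the same route as the paper: the computations behind your (1) $\Rightarrow$ (4) are the paper's (1) $\Rightarrow$ (3) $\Rightarrow$ (4), your (4) $\Rightarrow$ (2) uses the same decomposition of $\Ad(u_m^*)(u^*)$ into terms $S_{\alpha\gamma}T_{\gamma,\delta}(\cdot)S_{\beta\delta}^*$ together with the commutation of $\varphi^m(u)$ with $\D_E^0$ (resp.\ $\F_E^m$), and your two halves of (1) $\Leftrightarrow$ (2) coincide with the paper's (2) $\Rightarrow$ (1) and its alternative direct proof of (1) $\Rightarrow$ (2). The only point worth tightening is in (2) $\Rightarrow$ (1): the identity $\lambda_u(w)=\Ad(u_m)(w)$ requires $m$ large enough that $w\in\F_E^m$ (note $w=u_{m_0}^*u^*u_{m_0}\in\F_E^{k+m_0-1}$), not merely that $m$ is past the stabilization point, but this is a one-line fix.
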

\begin{proof}
(1) $\Rightarrow$ (3): If the inverse of $\lambda_u$ is localized then 
there exists an $l$ such that $\lambda_u^{-1}(\F_E^{k-1})\subseteq\F_E^l$. 
Let $\alpha=(e_1,e_2,\cdots,e_l)$ and $\beta=(f_1,f_2,\cdots, f_l)$ be paths of length $l$, 
and consider an element $T_{\alpha, \beta}=a_{e_l,f_l}^u\cdots a_{e_2,f_2}^ua_{e_1,f_1}^u$ 
of $A_u^l$. Let $b\in\F_E^{k-1}$ and let $x=\lambda_u^{-1}(b)$. Then $x\in\F_E^l$ and we have 
$b=\lambda_u(x)=\Ad(u_l)(x)$. Therefore
$$ T_{\alpha, \beta}(b)=a_{e_l,f_l}^u\cdots a_{e_2,f_2}^ua_{e_1,f_1}^u(b)=
S_\alpha^*{\rm Ad}(u^*_l)(b)S_\beta=S_\alpha^*xS_\beta. $$
Since $x$ can be written as $\sum_{|\gamma|=|\rho|=l}c_{\gamma,\rho}(x)
S_\gamma S_\rho^*$ for some $c_{\gamma,\rho}(x)\in\bC$, we have
$$ T_{\alpha, \beta}(b)=S^*_\alpha\Big(\sum_{|\gamma|=|\rho|=l}c_{\gamma,\rho}(x)S_\gamma S_\rho^*\Big)S_\beta =\begin{cases} c_{\alpha,\beta}(x)P_{r(\alpha)} , & \; \text{if}\; r(\alpha)=r(\beta)\\
             0 , & \; \text{if}\; r(\alpha)\neq r(\beta)\\
\end{cases} $$
because $S_\alpha^*S_\gamma=P_{r(\alpha)}$ if $\alpha=\gamma$, and $S_\alpha^*S_\gamma=0$ 
otherwise. This implies that $T_{\alpha, \beta}(b)\in\D_E^0$, and hence we see that $A_u^l=0$.

(3) $\Rightarrow$ (4): Let $A_u^l = 0$ for some positive integer $l$. Then $T_{\alpha,\beta}(b)\in\D_E^0$ 
for all $b\in\F_E^{k-1}$ and all $\alpha,\beta$ such that $|\alpha|=|\beta|=l$. But this immediately 
yields $\Xi_l\subseteq\D_E^0$ and, consequently, $\Xi_u\subseteq\D_E^0$. 

(4) $\Rightarrow$ (2): Let $\Xi_u\subseteq \D_E^0$, and let $l$ be a positive 
integer such that $\Xi_l=\Xi_u$. Let $b\in\F_E^{k-1}$ and let $\alpha,\beta\in E^l$. Then 
$T_{\alpha,\beta}(b)$ belongs to $\D_E^0$ and thus it commutes with $\varphi^m(u)$ for all $m$, 
since $u$ commutes with the vertex projections. Consequently, for each $r\geq 1$ we have 
$$ \begin{aligned}
\Ad(u^*_{l+r})(b) &= \Ad(\varphi^{l-1+r}(u^*)\cdots \varphi^l(u^*)) \Big( \sum_{\alpha,\beta\in E^l}
S_\alpha T_{\alpha,\beta}(b) S_\beta^* \Big) \\ 
 & = \sum_{\alpha,\beta\in E^l} S_\alpha \Ad(\varphi^{r-1}(u^*)
\cdots u^*)(T_{\alpha,\beta}(b)) S_\beta^* \\
 & = \sum_{\alpha,\beta\in E^l} S_\alpha T_{\alpha,\beta}(b) S_\beta^* \ . 
\end{aligned} $$
Thus for each $b\in\F_E^{k-1}$ the sequence $\Ad(u^*_m)(b)$ stabilizes from $m=l+1$. Write 
$u^*=\sum_{e,f\in E^1} S_e b_{e,f} S_f^*$, for some $b_{e,f}\in\F_E^{k-1}$. Then 
for each $m$ we have 
$$ \begin{aligned} 
\Ad(u^*_{m+1})(u^*) & = \sum_{e,f\in E^1}\Ad(\varphi(\varphi^{m-1}(u^*)\cdots 
   \varphi(u^*)u^*))(S_e b_{e,f} S_f^*)  \\ 
 & = \sum_{e,f\in E^1} S_e \Ad(\varphi^{m-1}(u^*)\cdots \varphi(u^*)u^*)(b_{e,f}) S_f^* 
\end{aligned} $$
and, consequently, the sequence $\Ad(u^*_m)(u^*)$ stabilizes from $m=l+2$. 

(2) $\Rightarrow$ (1):  Suppose that the sequence ${\rm Ad}(u_m^*)(u^*)$ eventually stabilizes. 
Hence ${\rm Ad}(u_r^*)(u^*)=w$ for all sufficiently large $r$. It follows that 
$\lambda_u(w)={\rm Ad}(u_r)(w)=u^*$ for suitable $r$ depending on $w$. 
Thus $\lambda_u(w)u=u^*u=1$ and, consequently, 
$\lambda_u$ is invertible with inverse $\lambda_w$.  This completes the proof. 

We also include a different and much more direct proof of implication (1) $\Rightarrow$ (2), that 
sheds additional light on the equivalent conditions of the theorem and is interesting in its own right. 

Let $\lambda_u$ be invertible, and suppose that there exists an $l\in\bN$ and 
a unitary $v\in\U_E$ in $\F_E^l$  such that $\lambda_u \lambda_v = {\rm id}$. 
Then we have $\lambda_u(v)u = 1$.
Since $v\in \F_E^l$, $u^*=\lambda_u(v)={\rm Ad}(u_l)(v)$, and hence ${\rm Ad}(u_l^*)(u^*)=v$.
Now for $r\geq 1$ we have 
\begin{eqnarray*}
{\rm Ad}(u_{l+r}^*)(u^*) & = & {\rm Ad}(\varphi^{l+r-1}(u^*)\cdots \varphi(u^*)u^*)(u^*)\\
&=& \varphi^{l+r-1}(u^*)\cdots \varphi^l(u^*){\rm Ad}(u_l^*)(u^*)\varphi^l(u)\cdots\varphi^{l+r-1}(u)\\
&=& \varphi^{l+r-1}(u^*)\cdots \varphi^l(u^*)v\varphi^l(u)\cdots\varphi^{l+r-1}(u) \\
&=& v
\end{eqnarray*}
since $v$ commutes with $\varphi^m(u)$ for every $m\geq l$. Thus we can conclude that
${\rm Ad}(u_m^*)(u^*)$ stabilizes at $v$ from $m=l$.
\end{proof}

\begin{remark}\label{bogolubov}
\rm Let $u\in\F_E^1\cap\U_E$, so that $\lambda_u$ is quasi-free. Since $\F_E^0=\D_E^0$, we have  
$V_1=\D_E^0/\D_E^0=\{0\}$ and consequently each $\tilde{a}^u_{e,f}$ is a zero map. Therefore 
$A_u=\{0\}$ and Theorem \ref{main} trivially implies that $\lambda_u$ is an automorphism of $C^*(E)$. 
\end{remark}

If $u\in\U_E$ normalizes $\D_E$ then $\lambda_u(\D_E)\subseteq\D_E$. It may well happen that 
such a restriction is an automorphism of $\D_E$ even though $\lambda_u$ is not invertible. For 
unitaries in the algebraic part of $\F_E$ this can be checked in a way similar to \cite[Theorem 3.4]{CS}. 
Indeed, let $u\in\U_E\cap\N_{\F_E^k}(\D_E^k)$. Then it follows from (\ref{commutation}) that 
$u\in\N_{\F_E}(\D_E)$. Furthermore, the subspace $\D_E^{k-1}$ of $\F_E^{k-1}$ 
is invariant under the action of all maps $a^u_{e,f}$, $e,f\in E^1$. We denote by $b^u_{e,f}$ 
the restriction of $a_{e,f}^u$ to $\D_E^{k-1}$, and by $\tilde{b}_{e,f}^u$ the map induced on 
$V_k^D:=\D_E^{k-1}/\D_E^0$. Let $A_u^D$ be the subring of ${\mathcal L}(V_k^D)$ generated 
by $\{\tilde{b}_{e,f}^u : e,f\in E^1\}$. Also, we consider a nested sequence of subspaces  
$\Xi_{r}^D$ of $\D_E^{k-1}$, defined inductively as 
\begin{equation}\label{kxid}
\Xi_0^D = \D_E^{k-1}, \;\;\; \Xi_r^D = \lambda_u(H)^* \Xi_{r-1}^D \lambda_u(H), \; r\geq1. 
\end{equation}
Each $\Xi_r^D$ is finite dimensional 
and self-adjoint. We set $\Xi_u^D:=\bigcap_r \Xi_r^D$. 

\begin{theorem}\label{maindiagonal}
Let $E$ be a finite graph without sinks and let $u\in\U_E\cap\N_{\F_E^k}(\D_E^k)$, for 
some $k\geq1$. Then the following conditions are equivalent:
\begin{itemize}
\item[(1)] $\lambda_u$ restricts to an automorphism of $\D_E$;
\item[(2)] the ring $A_u^D$ is nilpotent;
\item[(3)] $\Xi_u^D \subseteq \D_E^0$.
\end{itemize} 
\end{theorem}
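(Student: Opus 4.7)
The plan is to mirror the proof of Theorem \ref{main}, substituting $\D_E^{k-1}$ for $\F_E^{k-1}$ and leveraging the hypothesis that $u$ normalizes $\D_E$ (as noted in the text preceding the theorem) to keep every intermediate object inside the diagonal. The equivalence (2) $\Leftrightarrow$ (3) is identical to its analogue in Theorem \ref{main}: $\Xi_r^D$ is by construction the linear span of $T_{\alpha,\beta}(\D_E^{k-1})$ for $|\alpha|=|\beta|=r$, so nilpotence of $A_u^D$ is tautologically equivalent to $\Xi_u^D \subseteq \D_E^0$.

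For (1) $\Rightarrow$ (3), the key preliminary is that $\lambda_u^{-1}(\D_E^{k-1}) \subseteq \D_E^l$ for some $l$. This uses the commutative AF structure of $\D_E$: every projection $p \in \D_E$ actually lies in some $\D_E^r$, because a self-adjoint norm approximation $h \in \D_E^r$ within $1/2$ of $p$ gives $\chi_{[1/2,\infty)}(h) = p$ via continuous functional calculus, since the values of the difference then coincide pointwise on the spectrum. Applying this to each minimal projection of $\D_E^{k-1}$ and taking a common upper bound yields $l$. The calculation from Theorem \ref{main} then transfers verbatim: for $b \in \D_E^{k-1}$, $\lambda_u^{-1}(b) = \Ad(u_l^*)(b) \in \D_E^l$, so $T_{\alpha,\beta}(b) = S_\alpha^* \lambda_u^{-1}(b) S_\beta$ lands in $\D_E^0$ for $|\alpha|=|\beta|=l$.

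For (3) $\Rightarrow$ (1), first run the stabilization argument of Theorem \ref{main}, (4) $\Rightarrow$ (2), on $b \in \D_E^{k-1}$: the expansion $\Ad(u_m^*)(b) = \sum_{|\alpha|=|\beta|=l} S_\alpha T_{\alpha,\beta}(b) S_\beta^*$ is independent of $m \geq l$ since each $T_{\alpha,\beta}(b) \in \D_E^0$ commutes with every $\varphi^j(u)$. Because $u$, hence each $\varphi^j(u)$ and thus $u_l$, normalizes $\D_E$, the stabilized value $v_b = u_l^* b u_l$ lies in $\D_E \cap \F_E^l = \D_E^l$, and $\lambda_u(v_b) = \Ad(u_l)(v_b) = b$, giving $\D_E^{k-1} \subseteq \lambda_u(\D_E^l)$.

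The main obstacle is lifting this surjectivity from $\D_E^{k-1}$ to all of $\D_E$: the Theorem \ref{main} trick of writing $u^* = \sum S_e b_{e,f} S_f^*$ with $b_{e,f} \in \F_E^{k-1}$ to recover $u^*$ directly is unavailable, since $u^*$ need not lie in $\D_E$. To remedy this, I would establish the composition identity $T_{\alpha,\beta} \circ T_{\mu,\nu} = T_{\mu\alpha,\nu\beta}$, which follows from $\varphi^i(u^*)S_\mu^* = S_\mu^* \varphi^{i+|\mu|}(u^*)$ combined with the telescoping $u_{|\mu|}\varphi^{|\mu|}(u_{|\alpha|}) = u_{|\mu|+|\alpha|}$, and then observe by a direct level count that $T_{\mu,\nu}$ with $|\mu|=|\nu|=r-k$ sends $\D_E^{r-1}$ into $\D_E^{k-1}$, because $\Ad(u_{r-k}^*)(\D_E^{r-1}) \subseteq \D_E^{k+(r-k)-1}$ and $S_\mu^* P_\gamma S_\nu$ for $|\gamma|=k+r-k-1$ either vanishes or lands in $\D_E^{k-1}$. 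Composing with a $T_{\alpha,\beta}$ of length $l$ then lands in $\D_E^0$, so every length-$(r-k+l)$ product of generators is nilpotent on $\D_E^{r-1}$. Rerunning the stabilization argument at this level provides a preimage in $\D_E^{r-k+l}$ for each element of $\D_E^{r-1}$. Since $\bigcup_r \D_E^r$ is dense in $\D_E$ and $\lambda_u(\D_E)$, being the image of an injective (hence isometric) $*$-homomorphism, is closed, this forces $\lambda_u(\D_E) = \D_E$; combined with Proposition \ref{coreinjectivity} this establishes (1).
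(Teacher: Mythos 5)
Your proof is correct, but for the key implication (2)/(3) $\Rightarrow$ (1) you take a genuinely different route from the paper. The parts (1) $\Rightarrow$ (3) and (2) $\Leftrightarrow$ (3) match the paper's argument: it likewise extracts $(\lambda_u|_{\D_E})^{-1}(\D_E^{k-1})\subseteq\D_E^l$ from the fact that the algebraic part of $\D_E$ is the span of its projections, and then reruns the computation from Theorem \ref{main}. For the converse, the paper argues by induction on $r\geq k$ that each $\D_E^r$ lies in the range of $\lambda_u$ restricted to $\bigcup_t\D_E^t$: the base case is your stabilization argument, and the inductive step uses that $\D_E^{r+1}$ is generated by $\D_E^r$ and $\varphi^r(\D_E^1)$ together with the identity $\Ad(\varphi^m(u^*)\cdots u^*)(\varphi^r(y))=\varphi\bigl(\Ad(\varphi^{m-1}(u^*)\cdots u^*)(\varphi^{r-1}(y))\bigr)$, valid because $\varphi^r(y)$ commutes with $u$ when $r\geq k$. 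You instead propagate the nilpotence itself: via the composition rule $T_{\alpha,\beta}\circ T_{\mu,\nu}=T_{\mu\alpha,\nu\beta}$ and a level count you show that sufficiently long products send every $\D_E^{r-1}$ (not just $\D_E^{k-1}$) into $\D_E^0$, and then run the stabilization argument at each level. Both routes are correct and of comparable length; yours makes explicit the pleasant fact that Condition (2) at level $k-1$ automatically holds at every level, and also spells out the final density-plus-closed-range step (with injectivity supplied by Proposition \ref{coreinjectivity}) that the paper leaves implicit. Two harmless slips: the stabilized element $u_l^*bu_l$ lies in $\D_E\cap\F_E^{k+l-1}=\D_E^{k+l-1}$ rather than $\D_E^l$, and $\lambda_u^{-1}$ should be read throughout as $(\lambda_u|_{\D_E})^{-1}$, since $\lambda_u$ itself need not be invertible.
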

\begin{proof}
(1) $\Rightarrow$ (3): Since the algebraic part $\bigcup_{t=0}^\infty \D_E^t$ of $\D_E$ coincides 
with the linear span of all projections in $\D_E$, every automorphism of $\D_E$ restricts to an 
automorphism of $\bigcup_{t=0}^\infty \D_E^t$. Thus, there exists an $l$ such that 
$(\lambda_u|_{\D_E})^{-1}(\D_E^{k-1})\subseteq \D_E^l$. Let $\alpha=(e_1,e_2,\cdots,e_l)$ 
and $\beta=(f_1,f_2,\cdots, f_l)$ be in $E^l$, and let $R_{\alpha, \beta}:=b_{e_l,f_l}^u\cdots 
b_{e_2,f_2}^u b_{e_1,f_1}^u$ be in $(A_u^D)^l$. Then the same argument as in the proof of 
implication (1)$\Rightarrow$(3) in Theorem \ref{main} yields that $R_{\alpha, \beta}(d)\in\D_E^0$ 
for all $d\in\D_E^{k-1}$. Thus $(A_u^D)^l=\{0\}$. But as in the proof of implication 
(3)$\Rightarrow$(4) in Theorem \ref{main}, this implies that $\Xi_u^D\subseteq\D_E^0$. 

(3) $\Rightarrow$ (2): Let $\Xi_u^D \subseteq \D_E^0$ and let $l$ be a positive 
integer such that $\Xi_l^D=\Xi_u^D$. Let $d\in\D_E^{k-1}$ and let $\alpha,\beta\in E^l$. Then 
$R_{\alpha,\beta}(d)$ belongs to $\D_E^0$, and this entails $(A_u^D)^l=\{0\}$, i.e. 
the ring $A_u^D$ is nilpotent. 

(2) $\Rightarrow$ (1): Suppose that $A_u^D$ is nilpotent. We show by induction on $r\geq k$ 
that all $\D_E^r$ are in the range of $\lambda_u$ restricted to $\bigcup_{t=0}^\infty \D_E^t$. 

Firstly, let $r=k$ and $d\in\D_E^k$. Similarly to the argument in the implication 
(4)$\Rightarrow$(2) of the proof of Theorem \ref{main} one shows that the sequence 
$\Ad(\varphi^m(u^*)\cdots\varphi(u^*)u^*)(d)$ eventually stabilizes at some 
$f\in\bigcup_{t=0}^\infty \D_E^t$. It then follows that $d=\lambda_u(f)$. 

For the inductive step, suppose that $r\geq k$ and $\D_E^r\subseteq \lambda_u(\bigcup_{t=0}^\infty 
\D_E^t)$. Since $\D_E^{r+1}$ is generated by $\D_E^r$ and $\varphi^r(\D_E^1)$, it suffices to 
show that $\varphi^r(y)$ belongs to $\lambda_u(\bigcup_{t=0}^\infty \D_E^t)$ for all $y\in\D_E^1$. 
However, $\varphi^r(y)$ commutes with $u$ and $\varphi^{r-1}(y)\in\D_E^r$ is 
in $\lambda_u(\bigcup_{t=0}^\infty \D_E^t)$ by the inductive hypothesis. Thus the sequence 
$$ \Ad(\varphi^m(u^*)\cdots\varphi(u^*)u^*)(\varphi^r(y)) = 
    \varphi(\Ad(\varphi^{m-1}(u^*)\cdots\varphi(u^*)u^*)(\varphi^{r-1}(y))) $$ 
eventually stabilizes at $\lambda_u^{-1}(\varphi^r(y))\in\bigcup_{t=0}^\infty \D_E^t$. 
\end{proof}

It should be noted that, in the setting of Theorem \ref{maindiagonal}, it may well happen that 
$\lambda_u|_{\D_E}$ is an automorphism of $\D_E$ while $\lambda_u$ is a proper endomorphism 
of $C^*(E)$. In that case there may not exist any unitary $w\in C^*(E)$ such that 
$(\lambda_u|_{\D_E})^{-1}=\lambda_w|_{\D_E}$, and thus $(\lambda_u|_{\D_E})^{-1}$ is not 
localized in our sense (as defined in the first paragraph of this section). See 
\cite{CS}, \cite{CKS}, \cite[Theorem 3.8]{CS2} and \cite[Proposition 3.2]{CHS3} for 
examples and further discussion of this interesting point. 


\section{The permutative automorphisms}\label{sectionP}

Throughout this section, we assume that $E$ is a {\em finite graph without sinks}. 
Our main goal in this section is to give a combinatorial 
criterion for invertibility of $\lambda_u$, $u\in\P_E$, analogous to \cite[Corollary 4.12]{CHS1}. 
Both the statement of the criterion (see Theorem \ref{combinatorialcriterion} below) and its proof are 
quite similar to those given in the case of the Cuntz algebras in \cite{CS}, suitably generalized to the 
present case of graph $C^*$-algebras. The key idea is to break the process into two steps,  
{\em Condition} (b) and {\em Condition} (d), of which the former detects those 
endomorphisms which restrict to automorphisms of the diagonal $\D_E$. 

It is useful to look at collections of paths of a fixed length beginning or ending 
at the same vertex. Hence we introduce the following notation. 
For $v,w\in E^0$ and $k\in\bN$, let $E_{v,*}^k:=\{\alpha\in E^k:r(\alpha)=v\}$, 
$E^k_{*,v}:=\{\alpha\in E^k : s(\alpha)=v\}$  and $E_{v,w}^k:=
\{\alpha\in E^k:r(\alpha)=v,\; s(\alpha)=w\}$. Then $E^k=\bigcup_{v\in E^0} E_{*,v}^k=
\bigcup_{v\in E^0}E_{v,*}^k=\bigcup_{v,w\in E^0}E_{v,w}^k$, disjoint unions.
If $u\in\P_E^k$, $k>0$, then there exist permutations $\sigma_v\in\perm(E_{v,*}^k)$ such that 
\begin{equation}\label{upk}
u = \sum_{v\in E^0} \sum_{\alpha\in E_{v,*}^k} S_{\sigma_v(\alpha)}S^*_\alpha. 
\end{equation}
A unitary $u\in\P_E^k$, $k>0$, commutes with all the vertex projections if and only if there 
exist permutations $\sigma_{v,w}\in\perm(E_{v,w}^k)$ such that 
\begin{equation}\label{upke}
u = \sum_{v,w\in E^0} \sum_{\alpha\in E_{v,w}^k} S_{\sigma_{v,w}(\alpha)}S^*_\alpha. 
\end{equation}
If the unitary $u\in\P_E^k\cap\U_E$ is understood, as in equation (\ref{upke}), 
then we will denote by $\sigma=\cup_{v,w\in E^0}\sigma_{v,w}$ the corresponding 
permutation of $E^k$. In that case, we will also write $\lambda_u=\lambda_\sigma$. 

Now let $u\in\P_E^k\cap\U_E$, $e,f\in E^1$, and consider the linear map $a^u_{e,f}$, 
as defined in (\ref{aefu}). With 
respect to the basis $\{S_\mu S_\nu^*:\mu,\nu\in E^{k-1}\}$ of $\F_E^{k-1}$ so ordered 
that the initial vectors span $\D_E^{k-1}$, the matrix of $a^u_{e,f}$ has the block form 
\begin{equation}\label{amatrix}
a^u_{e,f} = \left( \begin{array}{cc} b^u_{e,f} & c^u_{e,f} \\ 0 & d^u_{e,f} \end{array} \right), 
\end{equation}
similarly to \cite[Section 4]{CS}. The first block corresponds to the subspace $\D_E^{k-1}$ of 
$\F_E^{k-1}$. Thus, the map $\tilde{a}^u_{e,f}\in\LL(V_k)$ has a matrix 
\begin{equation}\label{atildematrix}
\tilde{a}^u_{e,f} = \left( \begin{array}{cc} \tilde{b}^u_{e,f} & * \\ 
0 & d^u_{e,f} \end{array} \right), 
\end{equation}
with the first block corresponding to the subspace $V_k^D$ of $V_k$. Note that the passage 
from the space $\F_E^{k-1}$ to its quotient $V_k$ does not affect the matrix for 
$d_{e,f}^u$ and thus there is no tilde over it in formula (\ref{atildematrix}). 
It is an immediate corollary to Theorem \ref{main} that an endomorphism $\lambda_u$ of $C^*(E)$ 
is invertible if and only if the following two conditions are satisfied.  
\begin{center}
\obeylines
Condition (b): \hspace{2mm} the ring generated by $\{\tilde{b}^u_{e,f} : e,f\in E^1\}$ is nilpotent.  
Condition (d): \hspace{2mm} the ring generated by $\{d^u_{e,f} : e,f\in E^1\}$ is nilpotent. 
\end{center}
The remainder of this section is devoted to the description of a convenient combinatorial interpretation 
of these two crucial conditions,  similar  to the one appearing in \cite{CS} and used in the analysis of 
permutative endomorphisms of the Cuntz algebras. 
By virtue of Theorem \ref{maindiagonal}, Condition (b) alone is equivalent to the restriction of 
$\lambda_u$ to the diagonal $\D_E$ being an automorphism. 


\subsection{Condition (b)}

We fix $u\in\P_E^k\cap\U_E$ and denote by $\sigma$ the corresponding permutation, as above. If 
$e\neq f$ then $b^u_{e,f}=0$. Thus, it suffices to consider the ring generated by maps 
$b^u_e:=b^u_{e,e}$, $e\in E^1$. Since $b^u_e(1)=P_{r(e)}$, the matrix of $b^u_e$ has 
exactly one 1 in the row corresponding to each $\alpha\in E^{k-1}_{*,r(e)}$, and 0's elsewhere. 
Consequently, each $b^u_e$ may be identified with a mapping 
\begin{equation}\label{fe}
f_e^u: E^{k-1}_{*,r(e)} \to E^{k-1}_{*,s(e)}, \;\;\; f_e^u(\alpha)=\beta, 
\end{equation}
whenever $b^u_e$ has 1 in the $\alpha$--$\beta$ entry. If the unitary $u$ is given by a 
permutation $\sigma$ then 
\begin{equation}\label{fepermutation}
f_e^u(\alpha)=\beta \; \Leftrightarrow \; \exists g\in E^1 \; \text{s.t.} \; \sigma(e,\alpha)=(\beta,g). 
\end{equation}
The product $b^u_e b^u_g$ corresponds to the composition $f^u_g\circ f^u_e$ (in 
reversed order of $e$ and $g$). Now Condition (b) may be phrased in terms of mappings 
$\{f^u_e\}$ rather than $\{b^u_e\}$, as follows: 

\vspace{2mm}\noindent
There exists an $m$ such that for all $e_1,\ldots,e_m\in E^1$ if $T=f^u_{e_1}\circ\ldots\circ f^u_{e_m}$ 
then for all $v\in E^0$ and $\alpha\in E^{k-1}$ either $E^{k-1}_{*,v}\cap T^{-1}(\alpha)=
\emptyset$ or $E^{k-1}_{*,v}\subseteq T^{-1}(\alpha)$. 

\vspace{2mm}\noindent
Taking into account (\ref{fe}) above, we arrive at the following: 

\vspace{2mm}\noindent
{\bf Condition (b)}: There exists an integer $m\in\bZ$ such that for all $e_1,\ldots,e_m\in E^1$ either: (i)  
$f^u_{e_1}\circ\ldots\circ f^u_{e_m}$ has the empty domain, or (ii) its domain equals 
$E^{k-1}_{*,r(e_m)}$ and its range consists of exactly one element. 

\vspace{2mm}
In the remainder of this section, notation $(\alpha,\beta)$ indicates either a single path in $E^*$ or 
an ordered pair in the cartesian product $E^*\times E^*$. This will be clear from context. 

\begin{lemma}\label{conditionb}
Let $E$ be a finite graph without sinks in which every loop has an exit. Let $u\in\P_E^k\cap\U_E$. 
Then Condition (b) holds for $u$ (and hence $\lambda_u|_{\D_E}$ is an automorphism of $\D_E$) 
if and only if there exists a partial order $\leq$ on 
$\bigcup_{v\in E^0}E^{k-1}_{*,v}\times E^{k-1}_{*,v}$ such that:
\begin{enumerate}
\item if $v\in E^0\setminus r(E^1)$ then each element of $E^{k-1}_{*,v}\times E^{k-1}_{*,v}$ 
is minimal, each diagonal element $(\alpha,\alpha)$ is minimal, and there are no other minimal elements; 
\item if $e\in E^1$ and $\alpha\neq\beta\in E^{k-1}_{*,r(e)}$ then 
$(f^u_e(\alpha),f^u_e(\beta))\leq(\alpha,\beta)$. 
\end{enumerate}
\end{lemma}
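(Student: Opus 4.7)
My plan is to establish both implications of the equivalence, with the partial order in the ``only if'' direction constructed explicitly from the descent dynamics of the maps $f^u_e$.

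For the implication from Condition (b) to the existence of the partial order, I would define $(\alpha',\beta')\leq(\alpha,\beta)$ to mean that either $(\alpha',\beta')=(\alpha,\beta)$, or $\alpha\neq\beta$ together with the existence of a composable edge sequence $e_1,\ldots,e_r$ satisfying $(\alpha',\beta')=(f^u_{e_1}\circ\cdots\circ f^u_{e_r})(\alpha,\beta)$ with every intermediate pair non-diagonal. Reflexivity is built in, and transitivity follows by concatenating edge sequences. Antisymmetry is the decisive step: two distinct pairs that were mutually comparable would yield, by concatenation, a cycle of non-diagonal pairs which could be iterated indefinitely to produce edge sequences of length exceeding any $m$ whose associated $T$ has non-empty domain and range of size at least two, contradicting Condition (b). Condition~2 is then immediate. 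For condition~1, diagonal pairs and pairs over vertices $v\in E^0\setminus r(E^1)$ admit no $f^u_e$-transition (in the latter case because no edge has range $v$), hence are minimal; conversely, for any non-diagonal $(\alpha,\beta)$ with $s(\alpha)=s(\beta)=v\in r(E^1)$, the absence of sinks supplies an edge sequence starting at $v$, and Condition (b) forces the corresponding orbit to collapse onto the diagonal, exhibiting a strict descendant of $(\alpha,\beta)$ and hence its non-minimality.

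For the converse, given a partial order as described, I would choose $m$ strictly greater than the length of the longest chain in the finite poset $\bigcup_{v\in E^0}E^{k-1}_{*,v}\times E^{k-1}_{*,v}$ and analyze an arbitrary composition $T=f^u_{e_1}\circ\cdots\circ f^u_{e_m}$ with non-empty domain. For any $(\alpha,\beta)$ in the domain, I would track the orbit $(\alpha_i,\beta_i)$ indexed backwards from $(\alpha_m,\beta_m)=(\alpha,\beta)$. Either the orbit reaches a diagonal pair at some step, forcing $T(\alpha)=T(\beta)$ as required, or it remains entirely non-diagonal, in which case condition~2 renders it a descending chain in the poset. Non-emptiness of the domain forces each intermediate source vertex to lie in $r(E^1)$, so condition~1 prevents termination at a non-diagonal minimal element; combined with the hypothesis that every loop has an exit (used to exclude indefinite stationary runs produced by self-looping edges that fix the pair) and the bound on $m$ by the height of the poset, this forces the orbit to collapse onto the diagonal, establishing Condition (b).

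The main obstacle will be the last step of the converse direction: the descent supplied by condition~2 is only weak, so ruling out the case in which a non-diagonal pair is fixed by every applicable edge requires exploiting condition~1 jointly with the loop-exit hypothesis and the local combinatorial structure of the graph around the relevant self-loops. This is the point at which the graph-theoretic hypotheses enter essentially, and the bookkeeping needed to convert non-minimality in the poset into a genuine strict $f^u_e$-descent is the most delicate part of the argument.
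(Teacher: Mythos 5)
Your forward implication is essentially the paper's own argument: you take the reachability order generated by the componentwise $f^u_e$-dynamics, get antisymmetry because a cycle of off-diagonal pairs would produce arbitrarily long compositions whose range contains two points, and get condition~1 because a minimal off-diagonal pair over a vertex of $r(E^1)$ would force a single $f^u_e$ to have two distinct fixed points. That half is sound and matches the paper.

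The converse is where the genuine gap sits, and your proposed repair does not close it. Condition~2 gives only weak descent, and condition~1 only says that a non-diagonal pair over a receiving vertex is non-minimal in the chosen order, i.e.\ that \emph{something} lies strictly below it; it does not say that any particular $f^u_e$ moves that pair strictly down. Since Condition (b) quantifies over \emph{all} edge sequences --- including the constant sequence $e,e,\dots,e$ at a self-loop $e$ of $E$ --- a single map $f^u_e$ fixing two distinct elements already destroys Condition (b), no matter what the other edges do, and nothing in conditions~1--2 (nor in the loop-has-an-exit hypothesis, which constrains $E$ rather than the pair dynamics) rules this out. Concretely, let $E$ have one vertex and two loops (so $C^*(E)=\O_2$), let $k=3$, and let $\sigma$ be the transposition exchanging the words $112$ and $121$. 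Then $f^u_1$ fixes both $11$ and $12$, so every iterate $f^u_1\circ\cdots\circ f^u_1$ has range $\{11,12\}$ and Condition (b) fails; yet the reachability order itself satisfies conditions~1 and~2, because every off-diagonal pair has a strict descendant (for instance $(f^u_2(11),f^u_2(12))=(21,21)$ is diagonal and lies strictly below $(11,12)$) and there is no cycle through two distinct off-diagonal pairs. So the poset conditions, read literally, do not imply Condition (b), and no bookkeeping with condition~1 and the exit hypothesis can change that. The counting argument you sketch (bounding $m$ by the height of the poset) does go through, but only after condition~2 is strengthened to \emph{strict} descent $(f^u_e(\alpha),f^u_e(\beta))\lneq(\alpha,\beta)$ whenever $f^u_e(\alpha)\neq f^u_e(\beta)$ --- equivalently, acyclicity, self-loops included, of the off-diagonal pair graph --- which is also exactly what the forward direction delivers for the canonical order. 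Be aware that the paper's own proof of this direction consists of the single phrase ``counting shows\dots'' and silently relies on that strict form; your instinct that this is the delicate step was right, but the step as you (and the paper) state it is not merely delicate, it is false without the strengthening.
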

\begin{proof}
At first suppose that Condition (b) holds for $u$. 
Define a relation $\leq$ as follows. For any $\alpha\in E^{k-1}$ set $(\alpha,\alpha)\leq(\alpha,
\alpha)$. If $\gamma \neq \delta$ then
$(\alpha,\beta) \leq (\gamma,\delta) $ if and only if there exists a sequence
$e_1,\ldots,e_d\in E^1$, possibly empty, such that $\alpha = (f^u_{e_1} \circ \cdots \circ
f^u_{e_d})(\gamma)$ and $\beta =( f^u_{e_1} \circ \cdots \circ f^u_{e_d})(\delta)$.
Reflexivity and transitivity of $\leq$ are obvious. To see that $\leq$ is also antisymmetric, 
suppose that $(\alpha,\beta) \leq (\gamma,\delta)$ and $(\gamma,\delta) \leq (\alpha,\beta)$.
If $(\alpha,\beta)\neq(\gamma,\delta)$ then, by definition of $\leq$,
$\alpha \neq \beta$, $\gamma \neq \delta$ and there exist edges 
$e_1,\ldots,e_d$, $g_1,\ldots,g_h$ such that
$(\alpha,\beta) = (f^u_{e_1} \circ \cdots \circ f^u_{e_d})(\gamma,\delta)$ and
$(\gamma,\delta) = (f_{g_1} \circ \cdots \circ f_{g_h})(\alpha,\beta)$.
Then $(\alpha,\beta) = (f^u_{e_1} \circ \cdots \circ f^u_{e_d} \circ f_{g_1} \circ \cdots
\circ f_{g_h})(\alpha,\beta)$. That is, $f^u_{e_1} \circ \cdots
\circ f^u_{e_d} \circ f_{g_1} \circ \cdots \circ f_{g_h}$ has
two distinct fixed points,  a contradiction with Condition (b). Thus $(\alpha,\beta) = 
(\gamma,\delta)$ and $\leq$ is also antisymmetric. Hence $\leq$ is a partial order 
satisfying condition 2 above. By the very definition of $\leq$, 
if $v\in E^0\setminus r(E^1)$ then each element of $E^{k-1}_{*,v}\times E^{k-1}_{*,v}$ 
is minimal, and likewise each diagonal element $(\alpha,\alpha)$ is minimal. If any other element 
were minimal for $\leq$ then there would exist $\alpha\neq\beta$ and $e\in E^1$ such that 
$f^u_e(\alpha)=\alpha$ and $f^u_e(\beta)=\beta$. Thus $f^u_e$ would have two distinct 
fixed points, contradicting Condition (b). Thus condition 1 holds true as well. 

Conversely, if a partial order $\leq$ with the required properties exists, then counting shows that each 
sufficiently long composition product of mappings $\{f^u_e\}$ either has the empty domain or its 
range consists of a single element (and the domain is as required, due to (\ref{fe})). 
This completes the proof. 
\end{proof}

\begin{remark}\label{trees}\rm 
By the diagram of $f^u_e$ we mean a directed graph with vertices corresponding to the union of 
the domain and the range of the map $f^u_e$ and with an edge from vertex $\alpha$ to $\beta$ if and only 
if $f^u_e(\alpha)=\beta$. Combining the diagrams of all $f^u_e$, $e\in E^1$, we obtain a directed graph 
whose edges are labelled by $\{f^u_e\}$ or simply by the edges of $E$, see Example \ref{example1} below. 
Our Condition (b) is equivalent to 
existence of a positive integer $m$ such that all words of length $m$ are {\em synchronizing} for 
this labelled graph\footnote{We are grateful to Rune Johansen for pointing this out.}. 
Also note that if the conditions of Lemma \ref{conditionb} are satisfied and $e\in E^1$ is such that 
$s(e)=r(e)$, then the diagram of $f^u_e$ is a rooted tree with the root 
being the unique fixed point, cf. \cite[Section 4.1]{CS}. 
\end{remark}


\subsection{Condition (d)}

Again, we fix a $u\in\P_E^k\cap\U_E$ and denote by $\sigma$ the corresponding permutation. 
It is easy to verify that for $e,g\in E^1$ each row of the matrix $d^u_{e,g}$  
either may have 1 in one place and 0's elsewhere 
or may consist of all zeros. This matrix has 1 in $(\alpha,\beta)$ row and $(\gamma,\delta)$ column 
if and only if there exists an $h\in E^1$ such that $S_\alpha S^*_\beta=S^*_e u^* S_\gamma 
S_h S^*_h S^*_\delta u S_g$. In turn, this takes place if and only if 
\begin{equation}\label{dsigma}
\sigma(e,\alpha)=(\gamma,h) \;\;\; \text{and} \;\;\; \sigma(g,\beta)=(\delta,h). 
\end{equation}
For each $e,g\in E^1$ we now define a mapping $f^u_{e,g}$, as follows. The domain $D(f^u_{e,g})$ 
of $f^u_{e,g}$ consists of all $(\alpha,\beta)\in E^{k-1}_{*,r(e)}\times E^{k-1}_{*,r(g)}$ for 
which the $(\alpha,\beta)$ row of $d^u_{e,g}$ is non-zero, and the corresponding value is 
$f^u_{e,g}(\alpha,\beta)=(\gamma,\delta)\in E^{k-1}\times E^{k-1}$ for $(\gamma,\delta)$ 
satisfying (\ref{dsigma}). Note that, by the very definition of $d^u_{e,g}$, in such a case we must 
necessarily have $\alpha\neq\beta$ and $\gamma\neq\delta$. We denote $\Psi_u:=E^{k-1}\times 
E^{k-1} \setminus \{(\alpha,\alpha) : \alpha\in E^{k-1}\}$. We also denote by $\Delta_u$ the 
subset of $\Psi_u$ consisting of all those $(\alpha,\beta)$ for which there exist $e,g\in E^1$ such that 
$(\alpha,\beta)$ belongs to the domain $D(f^u_{e,g})$. 

It is a simple matter to verify that in terms of mappings $\{f^u_{e,g}\}$ Condition (d) may 
be rephrased as follows (cf. \cite[Section 4.3]{CS}). 

\vspace{2mm}\noindent
{\bf Condition (d)}: There exists an $m$ such that for all 
$(e_1,g_1),\ldots,(e_m,g_m)\in \Psi_u$ the 
domain of the map $f^u_{e_1,g_1}\circ\ldots\circ f^u_{e_m,g_m}$ is empty.

\vspace{2mm}
The proof of the following lemma is essentially the same as that of \cite[Lemma 4.10]{CS} and thus 
it is omitted. 

\begin{lemma}\label{conditiond}
Let $E$ be a finite graph without sinks in which every loop has an exit. Let $u\in\P_E^k\cap\U_E$. 
Then Condition (d) holds for $u$ if and only if there exists a partial order $\leq$ on $\Psi_u$ such that: 
\begin{enumerate}
\item the set of minimal elements coincides with $\Psi_u\setminus\Delta_u$; 
\item if $e,g\in E^1$ and $(\alpha,\beta)\in D(f^u_{e,g})$ then $f^u_{e,g}(\alpha, \beta) 
\leq (\alpha,\beta)$. 
\end{enumerate}
\end{lemma}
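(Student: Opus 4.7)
The plan is to mirror the proof strategy of Lemma \ref{conditionb}, adapting it to the setting of pairs in $\Psi_u$ and the partially-defined maps $f^u_{e,g}$ rather than the totally-defined maps $f^u_e$.

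For the forward direction, I would assume Condition (d) and construct the partial order explicitly. Define $(\alpha',\beta') \leq (\alpha,\beta)$ to mean either $(\alpha',\beta') = (\alpha,\beta)$, or there exists a finite sequence $(e_1,g_1),\ldots,(e_d,g_d)\in\Psi_u$ such that $(\alpha,\beta)$ lies in the domain of the composition $f^u_{e_1,g_1}\circ\cdots\circ f^u_{e_d,g_d}$ and $(\alpha',\beta')$ equals its value on $(\alpha,\beta)$. Reflexivity is built in, and transitivity follows by concatenating sequences. Antisymmetry is the place where Condition (d) enters: if two distinct pairs $(\alpha,\beta)$ and $(\alpha',\beta')$ satisfied $(\alpha',\beta')\leq(\alpha,\beta)\leq(\alpha',\beta')$, then composing the two witnessing sequences would yield a nontrivial composition of $f^u_{e_i,g_i}$'s fixing $(\alpha,\beta)$, and iterating it would produce arbitrarily long compositions with nonempty domain, contradicting Condition (d). By construction, property 2 of the lemma holds, and for property 1: an element is minimal iff there is no $(e,g)\in\Psi_u$ with $(\alpha,\beta)\in D(f^u_{e,g})$ (note that the values of $f^u_{e,g}$ land in $\Psi_u$ by the remark following its definition, so minimality is exactly non-membership in $\Delta_u$).

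For the reverse direction, I would assume such a partial order exists and argue by a finiteness/counting argument. Since $\Psi_u$ is finite, the partial order has finite height. If $(\alpha,\beta)$ lies in the domain of $f^u_{e_1,g_1}\circ\cdots\circ f^u_{e_m,g_m}$, then by property 2 each application strictly decreases the element in the order (strictness follows because the map lands in $\Delta_u$, hence is non-minimal, while after enough steps we would be forced below every minimal element). Taking $m$ larger than the length of the longest chain in $\Psi_u$ forces every such composition to have empty domain, which is exactly Condition (d).

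The main obstacle, as in Lemma \ref{conditionb}, is verifying antisymmetry of the defined relation, since this is precisely the step where the combinatorial hypothesis of nilpotency is used. A subtlety particular to the present setting (not present in Lemma \ref{conditionb}) is that the maps $f^u_{e,g}$ are only partially defined, so one must be careful that the iterated compositions actually make sense at the elements involved; this is why the definition of $\leq$ requires $(\alpha,\beta)$ to lie in the common domain of the whole composition, rather than merely of the last factor. Once this bookkeeping is in place, the argument proceeds as an almost verbatim translation of the proof of Lemma \ref{conditionb}, which is presumably why the authors defer to \cite[Lemma 4.10]{CS}.
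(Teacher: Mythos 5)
Your overall strategy is the right one: the paper omits the proof of this lemma, deferring to \cite[Lemma 4.10]{CS}, and the intended argument is indeed a transcription of the proof of Lemma \ref{conditionb} with the totally defined maps $f^u_e$ replaced by the partially defined maps $f^u_{e,g}$ on $\Psi_u$. Your forward direction is essentially correct and matches that template: the relation generated by the compositions is reflexive and transitive by construction, and antisymmetry is exactly where Condition (d) enters (a nontrivial cycle yields a composition fixing a point, hence arbitrarily long compositions with nonempty domain). One point you should make explicit, as the paper does in the corresponding step of Lemma \ref{conditionb}: to see that every element of $\Delta_u$ is non-minimal in the constructed order you must also rule out $f^u_{e,g}(\alpha,\beta)=(\alpha,\beta)$, which again follows from Condition (d) by iterating that single map; your parenthetical remark that the values land in $\Psi_u$ is not by itself enough.

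The genuine gap is in the reverse direction. You assert that each application of an $f^u_{e,g}$ strictly decreases the element in the order, but condition 2 only gives $f^u_{e,g}(\alpha,\beta)\leq(\alpha,\beta)$, and your justification for strictness does not hold up: non-minimality of $(\alpha,\beta)$ (condition 1) tells you that \emph{something} lies strictly below it, not that $f^u_{e,g}(\alpha,\beta)$ does. If some $f^u_{e,g}$ had a fixed point in $\Psi_u$, every step of your chain could be an equality while the compositions $f^u_{e,g}\circ\cdots\circ f^u_{e,g}$ had nonempty domain for every length, so bounding the height of the poset would give you nothing. The argument has to be run the other way around: if Condition (d) fails then, since $\Psi_u$ is finite, some composition of the maps has a fixed point $(\alpha,\beta)\in\Delta_u$; by condition 2 the intermediate images form a weakly decreasing chain returning to $(\alpha,\beta)$, so antisymmetry forces all of them to equal $(\alpha,\beta)$, and hence some single $f^u_{e,g}$ fixes $(\alpha,\beta)$. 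You then still owe an argument that such a fixed point of a single map is incompatible with conditions 1 and 2 --- this is the counterpart of the ``two distinct fixed points'' step in the proof of Lemma \ref{conditionb}, and it is precisely the case on which the whole equivalence turns. As written, your proposal never confronts it.
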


Combining Lemma \ref{conditionb} with Lemma \ref{conditiond} we obtain a combinatorial 
criterion of invertibility of permutative endomorphisms, similar to \cite[Corollary 4.12]{CS}. 

\begin{theorem}\label{combinatorialcriterion}
Let $E$ be a finite graph without sinks in which every loop has an exit, and let $u\in\P_E^k\cap\U_E$. 
Then the endomorphism $\lambda_u$ is invertible if and only if conditions of Lemma 
\ref{conditionb} and Lemma \ref{conditiond} hold for $u$. 
\end{theorem}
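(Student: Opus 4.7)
The plan is to read the claim off directly from Theorem~\ref{main}, combined with the block decomposition (\ref{atildematrix}) and the combinatorial translations already supplied by Lemmas~\ref{conditionb} and~\ref{conditiond}. Concretely, the equivalence (1)$\Leftrightarrow$(3) of Theorem~\ref{main}, applied to $u\in\P_E^k\cap\U_E$, reduces invertibility of $\lambda_u$ to nilpotency of the ring $A_u\subseteq\LL(V_k)$ generated by $\{\tilde{a}^u_{e,f}:e,f\in E^1\}$. The whole task is therefore to match that single nilpotency condition with the two combinatorial conditions in the two lemmas.

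The key algebraic step is to separate the ring $A_u$ according to the block form (\ref{atildematrix}). Each generator $\tilde{a}^u_{e,f}$ is upper triangular with respect to the decomposition of $V_k$ induced by the subspace $V_k^D$, with diagonal blocks $\tilde{b}^u_{e,f}$ (on $V_k^D$) and $d^u_{e,f}$ (on $V_k/V_k^D$). Matrix multiplication preserves this upper-triangular form and multiplies the diagonal blocks componentwise. Hence if the rings $B$ and $D$ generated respectively by $\{\tilde{b}^u_{e,f}\}$ and $\{d^u_{e,f}\}$ are both nilpotent, say $B^{n_1}=0=D^{n_2}$, then any product of at least $\max(n_1,n_2)$ of the generators $\tilde{a}^u_{e,f}$ has vanishing diagonal blocks; two such strictly upper-triangular elements multiply to zero, and $A_u$ is nilpotent. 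The reverse implication is immediate: nilpotency of $A_u$ descends to either diagonal block. Thus invertibility of $\lambda_u$ is equivalent to the simultaneous validity of Condition~(b) and Condition~(d).

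To finish, I would invoke Lemma~\ref{conditionb} to replace Condition~(b) by the existence of the partial order on $\bigcup_{v\in E^0}E^{k-1}_{*,v}\times E^{k-1}_{*,v}$ described there, and Lemma~\ref{conditiond} to replace Condition~(d) by the existence of the partial order on $\Psi_u$ described there. Combining the two equivalences yields the theorem.

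The potentially subtle point is the block-triangular nilpotency reduction, but it is a short linear-algebra computation; all the genuinely combinatorial work has already been isolated in Lemmas~\ref{conditionb} and~\ref{conditiond}, so no new obstacle arises here. The value of the result is that it converts the question of invertibility of a permutative endomorphism into a finite, algorithmically verifiable statement about the two labelled graphs defined by the maps $\{f^u_e\}$ and $\{f^u_{e,g}\}$ associated with the permutation $\sigma$ of $E^k$ corresponding to $u$.
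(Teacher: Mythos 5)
Your proposal is correct and follows essentially the same route as the paper: the paper likewise derives the criterion by applying Theorem~\ref{main} (invertibility $\Leftrightarrow$ nilpotency of $A_u$), observing via the block form (\ref{atildematrix}) that nilpotency of $A_u$ is equivalent to Conditions (b) and (d) holding simultaneously, and then invoking Lemmas~\ref{conditionb} and~\ref{conditiond}. The only difference is that you spell out the upper-triangular nilpotency reduction that the paper declares ``an immediate corollary,'' which is a welcome addition rather than a deviation.
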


\subsection{Examples}

We give two examples with small graphs illustrating the combinatorial machinery developed 
in the preceding section. In Example \ref{example1}, we exhibit a proper permutative endomorphism 
of $C^*(E)$ which restricts to an automorphism of the diagonal MASA $\D_E$. On the other 
hand, in Example \ref{example2} we provide an order 2 permutative automorphism of a Kirchberg 
algebra $C^*(E)$ with $K_0(C^*(E))\cong \bZ \cong K_1(C^*(E))$, which is neither quasi-free 
nor comes from a graph automorphism. 

\begin{example}\label{example1}
{\rm 

Consider the following graph $E$ (the Fibonacci graph): 

\[ \beginpicture
\setcoordinatesystem units <0.8cm,0.8cm>
\setplotarea x from -5 to 1, y from -2 to 2
\put {$\bullet$} at -3 0
\put {$\bullet$} at 0 0
\setquadratic
\plot -3 0  -1.5 1  0 0 /
\plot -3 0  -1.5 -1  0 0 /
\circulararc 360 degrees from -3 0 center at -4 0
\arrow <0.235cm> [0.2,0.6] from -4.1 1 to -4 1
\arrow <0.235cm> [0.2,0.6] from -1.6 1 to -1.5 1
\arrow <0.235cm> [0.2,0.6] from -1.4 -1 to -1.5 -1
\put{$e_1$} at -3.9 -1.3
\put{$e_2$} at -1.4 -1.3
\put{$e_3$} at -1.4 1.3
\endpicture \] 

At level $k=2$, there are 2 permutations in $\P_E^2\cap\U_E\cong\bZ_2$. Denoting edge $e_j$ simply 
by $j$, the non-trivial transposition is $(11,32)$. The corresponding map $f_1:\{1,3\} \to \{1,3\}$ 
is such that $f_1(1)=3$ and $f_1(3)=1$. Thus Condition (b) does not hold and, consequently, 
the corresponding endomorphism is surjective neither on $C^*(E)$ nor on $\D_E$. 

At level $k=3$, there are 24 permutations in $\P_E^3\cap\U_E\cong S_3\times\bZ_2\times\bZ_2$. 
These are generated by $\sigma=(132,321)$, $\tau=(111,132,321)$, $\upsilon=(113,323)$ and 
$\omega=(211,232)$. Of these 24 permutations, 
only $\tau\upsilon$ and $\id$ satisfy Condition (b). However,  $\tau\upsilon$ 
does not satisfy Condition (d). Thus, $\lambda_{\tau\upsilon}$ is a proper endomorphism of $C^*(E)$ 
(i.e., it is not surjective) which restricts to an automorphism of $\D_E$. 

The maps $f_1:\{(11),(13),(32)\} \to \{(11),(13),(32)\}$, 
$f_2:\{(11),(13),(32)\} \to \{(21),(23)\}$, and 
$f_3:\{(21),(23)\} \to \{(11),(13),(32)\}$ 
corresponding to permutation $\tau\upsilon$ and involved in verification 
of Condition (b) are illustrated by the following labelled graph. 

\[ \beginpicture
\setcoordinatesystem units <1.5cm,1.1cm>
\setplotarea x from -2 to 5, y from -3.5 to 2.5
\put {$\bigstar$} at -1 -1.5
\put {$\bullet$} at -1 0
\put {$\bullet$} at -1 1.5
\put {$\bullet$} at 3 1 
\put {$\bullet$} at 3 -1
\circulararc 360 degrees from -1 -1.57 center at -1 -2.1
\arrow <0.235cm> [0.2,0.6] from -0.655 -1.88 to -0.62 -2
\arrow <0.235cm> [0.2,0.6] from -1 0.9 to -1 0.7
\arrow <0.235cm> [0.2,0.6] from -1 -0.7 to -1 -0.9
\arrow <0.235cm> [0.2,0.6] from 1 -1.25 to 1.2 -1.21
\arrow <0.235cm> [0.2,0.6] from 1.8 0.7 to 2 0.77
\arrow <0.235cm> [0.2,0.6] from 1 1.25 to 1.2 1.21
\arrow <0.235cm> [0.2,0.6] from 0 0.89 to -0.15 0.99
\arrow <0.235cm> [0.2,0.6] from 1 2.12 to 0.85 2.1
\put{$\small (11)$} at -1.5 1.5
\put{$\small (13)$} at -1.5 0
\put{$\small (32)$} at -1.5 -1.5
\put{$\small (21)$} at 3.5 1
\put{$\small (23)$} at 3.5 -1
\put{$f_1$} at -1.2  0.7
\put{$f_1$} at -1.2  -0.7
\put{$f_1$} at -0.5 -2.4
\put{$f_2$} at  1.5 1.4
\put{$f_2$} at  0 0
\put{$f_2$} at  1.5 -1.4
\put{$f_3$} at  0.2  2.25
\put{$f_3$} at  2.2  -0.2

\setlinear 
\plot -1 1.5  -1 -1.5   3 -1  -1 1.5   3 1  -1 0  /
\setquadratic
\plot -1 1.5  1.3 2.1   3 1 /
\endpicture \] 

Note that the diagram of the map $f_1$, corresponding to an edge whose source and range 
coincide, gives rise to a rooted tree. This is the left hand side of the diagram above, with the root 
(the unique fixed point for $f_1$) indicated by a star. 

}
\end{example}

\begin{example}\label{example2}
{\rm 

Consider the following graph $E$: 

\[ \beginpicture
\setcoordinatesystem units <0.8cm,0.8cm>
\setplotarea x from -5 to 5, y from -2 to 2
\put {$\bullet$} at -3 0
\put {$\bullet$} at 0 0
\put {$\bullet$} at 3 0
\setquadratic
\plot -3 0  -1.5 1  0 0 /
\plot 0 0  1.5 1  3 0 /
\plot -3 0  -1.5 -1  0 0 /
\plot 0 0  1.5 -1  3 0 /
\circulararc 360 degrees from -3 0 center at -4 0
\circulararc 360 degrees from 3 0 center at 4 0
\arrow <0.235cm> [0.2,0.6] from -4.1 1 to -4 1
\arrow <0.235cm> [0.2,0.6] from -1.6 1 to -1.5 1
\arrow <0.235cm> [0.2,0.6] from 1.4 1 to 1.5 1
\arrow <0.235cm> [0.2,0.6] from 3.9 1 to 4 1
\arrow <0.235cm> [0.2,0.6] from -1.4 -1 to -1.5 -1
\arrow <0.235cm> [0.2,0.6] from 1.6 -1 to 1.5 -1
\put{$e_1$} at -3.9 -1.3
\put{$e_2$} at -1.4 -1.3
\put{$e_3$} at 1.5 -1.3
\put{$e_4$} at 4 -1.3
\put{$e_5$} at -1.4 1.3
\put{$e_6$} at 1.5 1.3
\endpicture \] 

At level $k=2$, there are 8 permutations in $\P_E^2\cap\U_E\cong\bZ_2^3$. Denoting edge $e_j$ simply 
by $j$, these are generated by transpositions $\sigma=(25,63)$, $\tau= (11,52)$ and $\upsilon=(36,44)$. 
Of these 8 permutations, only $\sigma$ and $\id$ satisfy Condition (b). Since $\sigma$ satisfies Condition (d) 
as well, $\lambda_\sigma$ is an automorphism of $C^*(E)$. We have 
\begin{eqnarray*}
\lambda_\sigma(S_2) & = & S_6 S_3 S_5^* + S_2 S_1 S_1^*, \\
\lambda_\sigma(S_6) & = & S_2 S_5 S_3^* + S_6 S_4 S_4^*, \\
\lambda_\sigma(S_j) & = & S_j, \;\;\; j=1,3,4,5,  
\end{eqnarray*}  
and it follows immediately that $\lambda_\sigma^2=\id$. 

We note that in the present case $K_0(C^*(E))\cong \bZ \cong K_1(C^*(E))$ by \cite{Cun3} and 
\cite[Section 4]{KPRR}, see also \cite{Rae}. Thus $C^*(E)$ is not isomorphic to a Cuntz algebra, and 
hence this example is not covered in any way by the results of \cite{CS}. 

}
\end{example}


\smallskip\noindent
Roberto Conti \\
Dipartimento di Scienze di Base e Applicate per l'Ingegneria \\
Sezione di Matematica \\
Sapienza Universit\`a di Roma \\
Via A. Scarpa 16 \\
00161 Roma, Italy \\
E-mail: roberto.conti@sbai.uniroma1.it \\

\smallskip\noindent
Jeong Hee Hong \\
Department of Data Information \\
Korea Maritime University \\
Busan 606--791, South Korea \\
E-mail: hongjh@hhu.ac.kr \\

\smallskip \noindent
Wojciech Szyma{\'n}ski\\
Department of Mathematics and Computer Science \\
The University of Southern Denmark \\
Campusvej 55, DK-5230 Odense M, Denmark \\
E-mail: szymanski@imada.sdu.dk

\end{document}